\newtheorem{theorem}{\bf Theorem}[section]
\newtheorem{proposition}[theorem]{\bf Proposition}
\newtheorem{lemma}[theorem]{\bf Lemma}
\newtheorem{corollary}[theorem]{\bf Corollary}
\newtheorem{example}[theorem]{\bf Example}
\newtheorem{remark}[theorem]{\bf Remark}
\newtheorem{question}{\bf Question}
\renewcommand{\tilde}{\widetilde}
\newcommand{\cupprod}{\mathbin{\smile}}
\def\PD{{\rm PD}}
\def\pr{{\rm pr}}
\def\C{{\mathbb C}}
\def\D{{\mathcal D}}
\def\0{{\mathbb 0}}
\def\P{{{\mathbb P}}}
\def\Z{{\mathbb Z}}
\def\AA{{\mathcal A}}
\def\BB{{\mathcal B}}
\def\cal{\mathcal}
\def\int{\mathrm{inter}}
\def\To_#1{\mathrel{\mathop{\longrightarrow}\limits_{#1}}}
\title{Computing dynamical degrees of rational maps on moduli space}
\thanks{The research of the first author was supported in part by the NSF DMS-1300315 and the Alfred P. Sloan Foundation}
\email{kochsc$@$umich.edu}
\address{Department of Mathematics\\
University of Michigan\\
East Hall, 530 Church Street\\
Ann Arbor MI 48109\\
United States }
\thanks{The research of the second author was supported in part by NSF grant DMS-1102597 and startup funds from the Department of Mathematics at IUPUI}
\email{rroeder$@$math.iupui.edu}
\address{ %
IUPUI Department of Mathematical Sciences\\
LD Building, Room 224Q\\
402 North Blackford Street\\
Indianapolis, Indiana 46202-3267\\
 United States }
\begin{document}

\maketitle
\begin{abstract}
The dynamical degrees of a rational map $f:X\dashrightarrow X$ are fundamental
invariants describing the rate of growth of the action of iterates of $f$ on
the cohomology of $X$. When $f$ has nonempty indeterminacy set, these
quantities can be very difficult to determine. We study rational maps
$f:X^N\dashrightarrow X^N$, where $X^N$ is isomorphic to the Deligne-Mumford
compactification $\overline {\mathcal M}_{0,N+3}$.  We exploit the stratified
structure of $X^N$ to provide new examples of rational maps, in arbitrary
dimension, for which the action on cohomology behaves functorially under iteration.  From this,
all dynamical degrees can be readily computed (given enough book-keeping and
computing time). In this article, we explicitly compute {\em all} of the dynamical
degrees for all such maps $f:X^N\dashrightarrow X^N$, where
$\mathrm{dim}(X^N)\leq 3$ and the first dynamical degrees for the mappings where $\mathrm{dim}(X^N)\leq 5$.  These examples naturally arise in the setting of
Thurston's topological characterization of rational maps.
\end{abstract}

\section{Introduction}\label{SEC:INTRO}

%


Let $X$ be a smooth, compact, complex algebraic variety of dimension $N$. A rational map
$f:X\dashrightarrow X$ induces a pullback action $f^*:H^{k,k}(X;\C)\to
H^{k,k}(X;\C$) (defined in Section \ref{action_on_cohomology}). 
A typical starting point for studying the dynamics associated to iterating 
$f$ is to compute the {\em dynamical degrees}
\begin{eqnarray}\label{EQN:DEF_DYN_DEG}
\lambda_k(f) := \lim_{n \rightarrow \infty} \|(f^n)^*:H^{k,k}(X;\C) \rightarrow H^{k,k}(X;\C)\|^{1/n},
\end{eqnarray}
which are defined for $0 \leq k \leq N$.
Given the dynamical degrees of $f$, there is a precise description of what ergodic properties
$f$ {\em should} have; see, for example, \cite{GUEDJ_ERGODIC}.
These properties have
been established when  $\lambda_N(f)$ is maximal \cite{GUEDJ,DNT} or when $\dim(X) =
2$, $\lambda_1(f) > \lambda_2(f)$, and certain minor technical hypotheses are
satisfied~\cite{DDG}.

Dynamical degrees were originally introduced by
Friedland \cite{FRIEDLAND} and later by Russakovskii and Shiffman
\cite{RUSS_SHIFF} and shown to be invariant under birational conjugacy by Dinh
and Sibony \cite{DS_BOUND}.  Dynamical degrees were originally defined with a ${\rm lim sup}$ instead of a limit in line (\ref{EQN:DEF_DYN_DEG}) above; however, it was shown in \cite{DS_BOUND,DS_REGULARIZATION} that the limit always exists.

If the map $f$ has points of
indeterminacy, then the iterates of $f$ may not act functorially on $H^{k,k}(X;\C)$, which can be a formidable obstacle to computing the dynamical
degree $\lambda_k(f)$. If the action of $f^\ast$ on $H^{k,k}(X;\C)$ is functorial; that is, for all
$m>0$
\begin{eqnarray*}
(f^\ast)^m:H^{k,k}(X;\C)\to H^{k,k}(X;\C)  \quad\text{equals}\quad  (f^m)^\ast : H^{k,k}(X;\C)\to H^{k,k}(X;\C)
\end{eqnarray*}
then the map $f:X\dashrightarrow X$ said to be {\em{$k$-stable}}. In this case,
it immediately follows that the dynamical degree $\lambda_k(f)$ is the spectral
radius of  $f^\ast: H^{k,k}(X;\C)\to H^{k,k}(X;\C)$. If $f:X\dashrightarrow X$
is $k$-stable for all $1\leq k \leq N$, then the map $f:X\dashrightarrow X$ is
said to be {\em{algebraically stable}}. Note that $f^*$ is automatically functorial
on $H^{N,N}(X;\C)$ and $\lambda_N(f)$ is the topological degree of $f$. For
more background and discussion of dynamical degrees and algebraic stability, we
refer the reader to \cite{BEDFORD_DYN_DEG,ROEDER_FUNC}. 

Given an arbitrary map $f: X \dashrightarrow X$, the problem of verifying that
$f$ is algebraically stable (or modifying $X$ in order to conjugate $f$ to an
algebraically stable map) can be quite subtle, as is the problem of determining
all of the dynamical degrees $\lambda_1(f),\lambda_2(f),\ldots,\lambda_N(f)$.
The purpose of this article is to study these problems for a specific family of
maps $f_\rho:X^N \dashrightarrow  X^N$ where both the map $f_\rho$ and the
space $X^N$ have additional structure.

More specifically, the space $X^N$ will be isomorphic to
${\overline{\cal M}}_{0,n}$, the Deligne-Mumford compactification of ${\cal
M}_{0,n}$, where ${\cal M}_{0,n}$ is the moduli space of genus $0$ curves with
$n$ labeled points. The space ${\overline{\cal M}}_{0,n}$ is a smooth
projective variety of dimension $N=n-3$ \cite{knudsen,knudsenmumford}. Given a
permutation $\rho\in S_n$, we build a map $f_\rho:=g_\rho \circ
s:X^N\dashrightarrow X^N$, where $ s:X^N\dashrightarrow X^N$ is a relatively
simple map to understand (although it has a nonempty indeterminacy set), and
$g_\rho:X^N\to X^N$ is an automorphism of $X^N$ induced by the permutation~$\rho$. 
The resulting $f_\rho$ has topological degree $\lambda_N(f_\rho) = 2^N$, so it remains to consider the other dynamical degrees $\lambda_k(f_\rho)$ for $1 \leq k < N$.

We can prove stability is a somewhat wider context:
For a fixed $N \geq 1$, let $\mathcal{F}^N$ denote the semi-group of rational self-mappings of $X^N$
generated under composition by all of the mappings of the form $f_\rho: X^N \dashrightarrow X^N$.
Our first main result is:

\begin{theorem}\label{THM_FUNCTORIAL_SEMIGROUP}
For any fixed $N \geq 1$ the semi-group $\mathcal{F}^N$ acts functorially on all cohomology groups of
$X^N$, i.e. for any $f_1,f_2 \in \mathcal{F}^N$ and any $1 \leq k \leq N$ we have
\begin{eqnarray*}
(f_1\circ f_2)^\ast = f_2^\ast \circ f_1^\ast: H^{k,k}(X^N;\C)\to H^{k,k}(X^N;\C).
\end{eqnarray*}
\end{theorem}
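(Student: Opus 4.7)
The strategy is to reduce the theorem to a local check on the generators $f_\rho$, using a standard criterion for when pullbacks under composition of rational maps commute. Recall that for rational self-maps $h_1,h_2: X^N \dashrightarrow X^N$, the equality $(h_1 \circ h_2)^\ast = h_2^\ast \circ h_1^\ast$ on $H^{k,k}(X^N;\C)$ holds provided no irreducible hypersurface $V \subset X^N$ on which $h_2$ is regular at a generic point is mapped by $h_2$ into the indeterminacy locus $I(h_1)$. Iterating this criterion along a word in $\mathcal{F}^N$, it suffices to prove: for every $f_1,f_2 \in \mathcal{F}^N$, no hypersurface contracted by $f_2$ has image contained in $I(f_1)$.

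The key geometric input is the Deligne--Mumford boundary stratification of $X^N \cong \overline{\mathcal{M}}_{0,N+3}$, whose strata are indexed by stable rational dual trees and whose closures are products of smaller $\overline{\mathcal{M}}_{0,m}$'s. First I would show that both $I(s)$ and the codimension-one components of the exceptional set of $s$ are unions of boundary strata; since each $g_\rho$ is an automorphism permuting boundary strata, the same then holds for $I(f_\rho)$ and for the exceptional divisors of $f_\rho$. Next I would describe how $f_\rho$ acts on each boundary stratum in its domain of regularity: because $s$ is a Thurston-style map whose behavior is controlled on each component of a nodal curve, its restriction to a stratum should factor through the corresponding product structure, making it feasible to track images of strata combinatorially.

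The heart of the argument --- and the main obstacle --- is verifying that the image under $f_\rho$ of each contracted hypersurface is a boundary stratum of codimension at least two that avoids the indeterminacy locus of every other element of $\mathcal{F}^N$. This amounts to a combinatorial check on dual trees, made tractable by the product structure of boundary strata and by the explicit form of $s$. Once this stratum-tracking is carried out for the generators, the composition criterion yields the theorem for any pair $f_1, f_2$: walk along the expressions of $f_1$ and $f_2$ as words in the generators, applying the criterion one letter at a time, and use the fact that exceptional divisors of partial compositions remain unions of boundary strata whose images under subsequent generators continue to avoid all indeterminacy loci. Induction on the total word length then extends functoriality from generators to the entire semi-group $\mathcal{F}^N$.
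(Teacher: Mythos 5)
There is a genuine gap, and it sits at the very first step. The criterion you invoke --- that $(h_1\circ h_2)^\ast = h_2^\ast\circ h_1^\ast$ on $H^{k,k}(X^N;\C)$ holds as long as no hypersurface regular for $h_2$ is sent by $h_2$ into $I(h_1)$ --- is a criterion for functoriality on $H^{1,1}$ only. For $k\geq 2$ it is simply not a valid sufficient condition: a subvariety of codimension $\geq 2$ can have its entire preimage inside the indeterminacy locus even when no hypersurface is contracted into indeterminacy, and this is exactly the phenomenon that makes $k$-stability delicate. The paper flags this explicitly in the remark following Proposition \ref{PROP:FUNCTORIAL}: the Bedford--Kim finiteness condition (and, a fortiori, the weaker ``no contracted hypersurface lands in $I$'' condition) guarantees $1$-stability but fails for $k>1$, with a counterexample cited from \cite[Prop.~6.1]{ROEDER_FUNC}. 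Since the theorem asserts functoriality on \emph{all} cohomology groups, your reduction would at best prove the $k=1$ case, and the subsequent stratum-tracking --- even if carried out completely --- cannot close this gap, because the conclusion you are iterating is not strong enough for $k\geq 2$.

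The paper's route is structurally different and is designed precisely to handle higher codimension: it uses the stronger sufficient condition of Proposition \ref{PROP:FUNCTORIAL}, namely the existence of a single projective manifold $\tilde X$ with holomorphic maps $\pr$ and $\tilde f$ resolving $f$ such that $\tilde f$ has \emph{finite fibers}. The technical heart is then Proposition \ref{goodresolution}: one builds $Y^N$ by blowing up $\P^N$ along all $s_0$-preimages $B_i = s_0^{-1}(A_i)$ of Kapranov's centers, constructs $\pr: Y^N \to X^N$ via the universal property of blow ups, lifts the squaring map to $\widetilde s: Y^N\to X^N$, and proves finiteness of the fibers of $\widetilde s$ (Lemma \ref{finitefibers}) by an induction on dimension exploiting the product structure of boundary fibers. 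Once this is in place for the single map $s$, functoriality for each generator $f_\rho = g_\rho\circ s$ and then for arbitrary words in $\mathcal{F}^N$ follows by the short induction you also envisage. If you want to salvage your outline, you would need to replace your opening criterion with a condition known to control pullbacks on $H^{k,k}$ for all $k$ --- in practice, constructing a resolution with finite fibers as the paper does, rather than tracking contracted hypersurfaces.
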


An immediate corollary is:

\begin{corollary}\label{algstabthm}
For any $N \geq 1$ any
$f \in \mathcal{F}^N$ is an algebraically stable self-map of~$X^N$.  In particular,
for any $1 \leq k \leq N$ we have that $\lambda_k(f)$ equals the spectral radius of
\[
f^\ast:H^{k,k}(X^N;\C)\to H^{k,k}(X^N;\C).
\]
\end{corollary}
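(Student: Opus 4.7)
The plan is to deduce both assertions of Corollary \ref{algstabthm} as nearly immediate consequences of Theorem \ref{THM_FUNCTORIAL_SEMIGROUP}, together with Gelfand's spectral radius formula. Given $f \in \mathcal{F}^N$, the first observation is that $f^n \in \mathcal{F}^N$ for every $n \geq 1$ since $\mathcal{F}^N$ is closed under composition by definition. I would then establish $(f^n)^\ast = (f^\ast)^n$ on $H^{k,k}(X^N;\C)$ by induction on $n$. The base case $n = 1$ is trivial; for the inductive step, writing $f^{n+1} = f \circ f^n$ and applying Theorem \ref{THM_FUNCTORIAL_SEMIGROUP} with $f_1 = f$ and $f_2 = f^n$ yields
\[
(f^{n+1})^\ast = (f \circ f^n)^\ast = (f^n)^\ast \circ f^\ast = (f^\ast)^n \circ f^\ast = (f^\ast)^{n+1},
\]
the penultimate equality being the induction hypothesis. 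This is precisely the definition of \emph{$k$-stability} for every $1 \leq k \leq N$, so $f$ is algebraically stable.

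For the spectral radius claim, I would substitute the identity $(f^n)^\ast = (f^\ast)^n$ into the defining equation (\ref{EQN:DEF_DYN_DEG}) for $\lambda_k(f)$, obtaining
\[
\lambda_k(f) = \lim_{n \to \infty} \|(f^n)^\ast\|^{1/n} = \lim_{n \to \infty} \|(f^\ast)^n\|^{1/n}.
\]
Since $X^N$ is a smooth projective variety, the cohomology group $H^{k,k}(X^N;\C)$ is finite-dimensional, so any two operator norms on its endomorphism algebra are equivalent and Gelfand's formula applies. The limit above is therefore exactly the spectral radius of $f^\ast$ acting on $H^{k,k}(X^N;\C)$, which is the claimed conclusion. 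No step here presents a genuine obstacle once Theorem \ref{THM_FUNCTORIAL_SEMIGROUP} is in hand; the real content of the corollary is carried entirely by that theorem, and the hard part therefore lies upstream in proving the functoriality statement itself.
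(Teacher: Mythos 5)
Your argument is correct and is exactly the intended deduction: the paper presents Corollary \ref{algstabthm} as an immediate consequence of Theorem \ref{THM_FUNCTORIAL_SEMIGROUP}, obtained by noting that $f^n \in \mathcal{F}^N$, that functoriality gives $(f^n)^\ast = (f^\ast)^n$ (hence $k$-stability for every $k$), and that the Gelfand formula then identifies $\lambda_k(f)$ with the spectral radius of $f^\ast$ on the finite-dimensional space $H^{k,k}(X^N;\C)$. There is nothing to add.
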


For the remainder of the paper we focus on computing the dynamical degrees of the generators $f_\rho$ of $\mathcal{F}^N$.  By Corollary \ref{algstabthm}
they {\em should} be easy to compute.
However, we are confronted with another challenge: the dimension
of $H^{k,k}(X^N;\C)$ grows exponentially with $N$ (see Table  \ref{TABLE_COH_DIMS}).  These numbers were computed using a
theorem of S. Keel (Theorem \ref{keelthm} below, published in \cite{KEEL}) which provides
generators and relations for the cohomology ring $H^*(X^N;\C)$.  Keel's Theorem will play
a central role in all of our calculations. Computing $(f_\rho)^\ast$ on $H^{k,k}(X^N;\C)$ is rather difficult as because $\mathrm{dim}(H^{k,k}(X^n;\C))$ is large. 

Our second main result is that 
we provide an algorithmic approach to computing
$(f_\rho)^*: H^{1,1}(X;\C) \rightarrow H^{1,1}(X;\C)$, which is presented in  Section \ref{SEC:DYN_DEG}.  This allows us to
readily compute $\lambda_1(f_\rho)$ for any $N$ and $\rho$, using the computer algebra system Sage
\cite{SAGE}. 
Values
of $\lambda_1(f_\rho)$ for $N = 2,3,4,5$ and various $\rho$ are tabulated in
Section \ref{SEC:CATALOG}.

\begin{table}
\begin{center}
\begin{tabular}{|c|ccccccc|}
\hline
$N \setminus k$ & 0 & 1 & 2 & 3 & 4 & 5 & 6  \\
0  &      1  &      & & & & &   \\
1  &    1    &   1   &     & & & &   \\
2  &    1    &   5   &    1  &       & & &   \\
3  &    1    &   16  &   16  &    1   &      & &   \\
4  &    1    &   42  &   127 &    42  &   1   &     &   \\
5  &    1    &  99   &   715 &    715 &    99  &    1   &         \\
6  &    1    &   219 &    3292 &    7723 &   3292 &   219 &    1   \\
\hline
\end{tabular}
\vspace{0.05in}
\caption{\label{TABLE_COH_DIMS}Table of dimensions of $H^{k,k}(X^N)$.}
\end{center}
\end{table}

It is far more technical to compute $(f_\rho)^\ast$ for $k \geq 2$ because
a subvariety $V \subset X^N$ of codimension greater than or equal to $2$ may have
preimage $f^{-1}(V)$ lying entirely in the indeterminacy set $I(f_\rho)$. 
Our final main result is 
computation of  
\begin{eqnarray*}
(f_\rho)^*: H^{2,2}(X^3;\C) \rightarrow H^{2,2}(X^3;\C)
\end{eqnarray*}
in Section \ref{SEC:DYN_DEG}.
The
resulting values for $\lambda_2(f_\rho)$ tabulated in Section
\ref{SEC:CATALOG}.  With sufficient book-keeping, we expect that this can be
done for all $N$ and $k$.

\begin{remark}For a given $\rho$, the space $X^N$ may not be optimal, meaning
that there is a space $Z^N$, obtained by blowing down certain hypersurfaces, on
which (a conjugate of) $f_\rho$ is still algebraically stable.  (For example, certain choices of $\rho$, including $\rho = {\rm id}$,
result in a mapping 
$f_\rho$ that is algebraically stable on $\P^N$.)
Similarly, if one is only interested in
$k$-stability for a particular value of $k$, there may be a blow down $Z^N$ of
$X^N$ on which all of the mappings $f_\rho$ are $k$-stable (see Remark \ref{REM:ONE_STABILITY}).  The merit of
working with $X^N$ is that every mapping $f_\rho$ is $k$-stable for all $1 \leq
k \leq N$ on the same space $X^N$.  \end{remark}


\noindent{\bf In the literature.} Dynamical degrees have been extensively
studied for maps $f:X\dashrightarrow X$ where $X$ is a surface. If $f$ is a
bimeromorphic map of a compact K\"ahler surface, J. Diller and C.
Favre~\cite{DILLER_FAVRE} proved that there is a proper modification
$\pi:\widehat X \to X$ so that $f$ lifts to an algebraically stable map
$\widehat f:\widehat X\dashrightarrow \widehat X$.  The space $\widehat X$ and lifted
map $\widehat f$ are called a {\em stabilization} of $f$.  However in~\cite{FAVRE}, C.
Favre found examples of monomial maps $f:\P^2\dashrightarrow\P^2$ of topological degree $\geq 2$ for which no
such stabilization exists. 

In the higher dimensional case $f:X\dashrightarrow X$, the question of the
functoriality of $f^*$ on $H^{1,1}(X;\C)$ (that is, whether or not $f$ is
$1$-stable) has been extensively studied \cite{BEDFORD_KIM, BEDFORD_KIM2,
BEDFORD_TRUONG, JONSSON_WULCAN, HASSELBLATT_PROPP}. The
functoriality of $f^*$ on $H^{k,k}(X;\C)$ for $2\leq k\leq N-1$ is typically even more delicate.
In \cite{LIN1}, Lin computes all of the dynamical degrees for
monomial maps $\P^3\dashrightarrow \P^3$. In \cite{BK,BCK}, Bedford-Kim and
then Bedford-Cantat-Kim study pseudo-automorphisms of 3-dimensional
manifolds, computing all dynamical degrees for a certain family of such maps.
In \cite{AMERIK}, Amerik computes all dynamical degrees for a particular map
$f:X\dashrightarrow X$, where $X$ is a 4-dimensional smooth compact complex projective variety arising
in a algebro-geometric context. In \cite{FAVRE_WULCAN,LIN2}, Favre-Wulcan and Lin
compute all dynamical degrees for monomial maps $\P^n\dashrightarrow\P^n$, and
\cite{LIN_WULCAN}, Lin-Wulcan study the problem of stabilizing certain
monomial maps $\P^n\dashrightarrow \P^n$.  There is also a notion of the
{\em{arithmetic degree}} (of a point) for dominant rational maps
$\P^n\dashrightarrow\P^n$ defined in~\cite{SILVERMAN}. 

\vspace{0.05in}
\noindent{\bf Motivation.} 
The maps $f:X^N\dashrightarrow X^N$ in Theorem \ref{algstabthm} constitute
a new family of examples for which algebraic stability is known and for which
all of the dynamical degrees can be systematically computed (with enough
book-keeping).  They also fit nicely within the context of stabilization, since
Kapranov's Theorem \cite{KAPRANOV} expresses $X^N$ as an iterated blow up
of the projective space $\P^N$.  We initially studied (conjugates of) these
mappings on $\P^N$ and later discovered that {\em all of them} stabilize when lifted to $X^N$.

Moreover, 
the maps $f_\rho:X^N\dashrightarrow X^N$ naturally
arise in the setting of Thurston's topological characterization of rational
maps \cite{sarah}. As a general rule, dynamical quantities associated to
iterating the maps $f_\rho$ should correspond to dynamical quantities
associated to iterating the {\em{Thurston pullback map}} on a Teichm\"uller
space.

\vspace{0.05in}
\noindent
{\bf Outline.}
We begin the paper in Section \ref{action_on_cohomology} with some background 
on the action of a rational map $f: X \dashrightarrow
X$ on cohomology and statement of the criterion for functoriality of a composition
(Proposition \ref{PROP:FUNCTORIAL}) that will be used to proof Theorem~\ref{THM_FUNCTORIAL_SEMIGROUP}.  In Section \ref{SEC:MODULI_SPACE}
we discuss several important properties of the moduli space ${\overline{\cal M}}_{0,n}$,
including Keel's Theorem and Kapranov's Theorem.  Basic properties of the mapping
$f_\rho$ are presented in Section \ref{maps_sect}.  Section \ref{SEC:STABILITY} is dedicated
to the proof of Theorem \ref{algstabthm}.  Computations of $(f_\rho)^\ast$ are done
in Section \ref{SEC:DYN_DEG}.  A catalog of dynamical degrees for specific examples
is presented in Section \ref{SEC:CATALOG}.  

\vspace{0.05in}
\noindent
{\bf Acknowledgments.}
We are very grateful to 
Omar Antol\'in Camarena for convincing us to use the Sage computer algebra program in our calculations and for helping us to write the scripts.  We are also very grateful to Tuyen Truong
who informed us of the universal property for blow ups, which plays a central role in the proof of Theorem \ref{algstabthm}.  We have benefited substantially from discussions with Eric Bedford, Xavier Buff, and Jeffrey Diller, and Kevin Pilgrim.
We also thank the anonymous referee for his or her careful reading of this paper and for making several comments that have improved its exposition.

The research of the first author was supported in part by the NSF DMS-1300315 and the Alfred P. Sloan Foundation.
The research of the second author was supported in part by NSF grant DMS-1102597 and startup funds from the Department
of Mathematics at IUPUI.

\section{Action on cohomology}\label{action_on_cohomology}
We begin by explaining how a dominant rational map $f:X\dashrightarrow Y$
between smooth complex projective varieties of dimension $N$ induces a well-defined pullback
$f^\ast:H^{k,k}(Y;\C)\to H^{k,k}(X;\C)$ even though $f$ may have a nonempty indeterminacy set $I_f$ (necessarily of codimension 2).  We will first work with the singular cohomology $H^{i}(Y;\C)\to H^i(X;\C)$, and we will then remark about why this definition preserves bidegree.

For the remainder of the paper we will use the term {\em projective manifold} to mean smooth, compact, complex projective variety.
If $V$ is a $k$-dimensional subvariety of a projective manifold
$X$ of dimension $N$, then $V$ determines a {\em fundamental homology class}
$\{V\} \in H_{2k}(X;\C)$.
The {\em fundamental cohomology class} of $V$ is
$[V] := \PD^{-1}(\{V\}) \in H^{2N-2k}(X;C)$, where $\PD_M: H^j(M) \rightarrow
H_{({\dim}_\mathbb{R}(M)-j)}(M)$ denotes the Poincar\'e duality isomorphism on
a manifold $M$.

Let 
\begin{eqnarray*}
\Gamma_f = \overline{\{(x,y) \in X \times Y \, : \, x \not \in I_f \, \mbox{and} \, y = f(x) \}}
\end{eqnarray*}
 be the {\em graph} of $f$ and let $[\Gamma_f] \in
H^{2N}(X \times Y; \C)$ denote its fundamental cohomology class. Let $\pi_1:X\times Y\to X$ and $\pi_2:X\times Y\to Y$ denote the canonical projection maps.  For any $\alpha \in H^{i}(Y;\C)$, one defines
\begin{eqnarray}\label{EQN:PULLBACK}
f^* \alpha := \pi_{1*}([\Gamma_f] \cupprod \pi_2^* \alpha).
\end{eqnarray}
Here, $\pi_2^*$ is the classical pullback on cohomology, as defined for regular
maps, and $\pi_{1*}$ is the pushforward on cohomology, defined by $\pi_{1*} =
\PD_X^{-1} \circ \pi_{1\#} \circ \PD_{X \times Y}$, where $\pi_{1\#}$ denotes
the push forward on homology.
If $f$ is regular
(i.e. $I_f = \emptyset$) then (\ref{EQN:PULLBACK}) coincides with the classical
definition of pullback.

Suppose that there exits an projective manifold $\tilde{X}$ and holomorphic maps $\pr$ and $\tilde{f}$ making the following
diagram commute (wherever $f \circ \pr$ is defined)
\begin{equation}\label{PULLBACK_DIAGRAM}
\xymatrix{\tilde{X} \ar[d]^\pr  \ar[dr]^{\tilde{f}}  & \\
X \ar @{-->}[r]^f & Y }
\end{equation}
Then, one can show that
\begin{eqnarray}\label{EQN:PULLBACK2}
f^* \alpha = \pr_* \left(\tilde{f}\right)^* \alpha;
\end{eqnarray}
see, for example, \cite[Lemma 3.1]{ROEDER_FUNC}.
Notice that for any rational map $f: X \dashrightarrow Y$, the space $\tilde{X}$ and maps $\pr$ and $\tilde{f}$ always exist: for example, $\tilde{X}$ can be obtained as a desingularization of $\Gamma_f$, with the maps $\pr$ and $\tilde{f}$ corresponding to the lifts of $\pi_1|_{\Gamma_f}$ and $\pi_2|_{\Gamma_f}$.
\[
\xymatrix{
& &\tilde X\ar[d] \ar@/^+2pc/[rdd]^{\tilde{f}} \ar@/^-2pc/[ldd]_{\pr}\\
& &\Gamma_f\ar[rd]^{\pi_2|_{\Gamma_f}} \ar[ld]_{\pi_1|_{\Gamma_f}} \\
&X  \ar @{-->}[rr]^f &  &Y
}
\]

For any K\"ahler manifold $X$, there is a natural isomorphism  $$\bigoplus_{p+q =
i} H^{p,q}(X;\C) \rightarrow H^i(X;\C),$$ where the former are the Dolbeault
cohomology groups and the latter is the singular cohomology.  This induces a
splitting on the singular cohomology of $X$ into bidegrees.  To see that
(\ref{EQN:PULLBACK}) preserves this splitting, observe that
(\ref{EQN:PULLBACK2}) can be applied to any $\overline{\partial}$-closed
$(p,q)$-form $\beta$, with $\left(\tilde{f}\right)^*$ interpreted as the
pullback on smooth forms and $\pr_*$ interpreted as the proper push-forward on
currents of degree $(p,q)$.  As both of these operations induce a well-defined map on cohomology, we see that the $f^* [\beta] = \pr_* \left(\tilde{f}\right)^*
[\beta] = \left[\pr_* \left(\tilde{f}\right)^* \beta\right] \in H^{p,q}(X;\C)$.  In
particular, the pullback defined by (\ref{EQN:PULLBACK}) can be used in the
definition of the dynamical degree $\lambda_k$ for any $1 \leq k \leq N$.

We note that many authors define the pullback on cohomology using forms and
currents as above, rather than the singular cohomology approach we have used.
For more discussion of the latter approach, see \cite{ROEDER_FUNC}.

We will use the following criteria for functoriality of compositions, which is proved in \cite{DS_SUPER,AMERIK,ROEDER_FUNC}:
\begin{proposition}\label{PROP:FUNCTORIAL}
Let $X,Y,$ and $Z$ be projective manifolds of equal dimension, and let $f: X \dashrightarrow Y$ and $g: Y \dashrightarrow Z$ be dominant rational maps.
Suppose that there exits a projective manifold  $\tilde{X}$ and holomorphic maps $\pr$ and $\tilde{f}$ making the following
diagram commute (wherever $f \circ \pr$ is defined)
\begin{equation}
\xymatrix{\tilde{X} \ar[d]^\pr  \ar[dr]^{\tilde{f}}  &  &\\
X \ar @{-->}[r]^f & Y \ar @{-->}[r]^g & Z}
\end{equation}
with the property that $\tilde{f}^{-1}(x)$ is a finite set for every $y \in Y$.  Then, $(g\circ f)^* = f^* \circ g^*$ on all cohomology groups.
\end{proposition}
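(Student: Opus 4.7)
The plan is to apply \eqref{EQN:PULLBACK2} to rewrite both sides of the desired identity in terms of honest holomorphic pullbacks and pushforwards on a common resolution, and then reduce the proposition to a base change identity that is controlled by the finite fiber hypothesis on $\tilde f$.

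First, I would resolve $g$ by choosing a projective manifold $\tilde Y$ together with holomorphic maps $q:\tilde Y\to Y$ and $\tilde g:\tilde Y\to Z$ satisfying $g\circ q=\tilde g$ wherever defined (for instance by desingularizing $\Gamma_g$). Next, I would produce a projective manifold $W$ with holomorphic maps $\sigma:W\to\tilde X$ and $\tau:W\to\tilde Y$ satisfying $\tilde f\circ\sigma=q\circ\tau$, obtained for example by desingularizing the main component of the fiber product $\tilde X\times_Y\tilde Y$. Then $\pr\circ\sigma:W\to X$ paired with $\tilde g\circ\tau:W\to Z$ provides a holomorphic resolution of $g\circ f$ in the sense of diagram \eqref{PULLBACK_DIAGRAM}, so applying \eqref{EQN:PULLBACK2} to $f$, $g$, and $g\circ f$ yields
\[
(g\circ f)^* \;=\; \pr_*\,\sigma_*\,\tau^*\,\tilde g^* \qquad\text{and}\qquad f^*\circ g^* \;=\; \pr_*\,\tilde f^*\,q_*\,\tilde g^*.
\]
Consequently the proposition reduces to the base change identity
\[
\sigma_*\,\tau^* \;=\; \tilde f^*\,q_* \qquad\text{on }H^{*}(\tilde Y;\C).
\]

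I expect this last identity to be the main obstacle, and it is the only place where the finite-fiber hypothesis enters. Properness of $\tilde f$ together with $\dim\tilde X=\dim Y$ and finiteness of every fiber $\tilde f^{-1}(y)$ forces $\tilde f$ to be a finite surjective (hence flat) holomorphic map. On a Zariski open set $U\subset Y$ lying outside both the branch locus of $\tilde f$ and the exceptional locus of $q$, the square relating $\tilde X$, $\tilde Y$, and $W$ is Cartesian with all four maps smooth, and $\sigma_*\tau^* = \tilde f^* q_*$ is the classical smooth base change formula, which I would verify by Fubini applied to smooth $(p,q)$-form representatives. The identity then extends to all of $\tilde X$ because the difference $\sigma_*\tau^*\gamma - \tilde f^* q_*\gamma$ is a closed current of pure bidegree supported in the proper analytic subset $\tilde f^{-1}(Y\setminus U)$; the finite-fiber hypothesis prevents this support from containing any hypersurface, and a standard support/bidegree vanishing argument kills the residual current. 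Without the finite-fiber hypothesis, hypersurfaces collapsed by $\tilde f$ would carry nonzero residual currents, which is exactly the mechanism by which $(g\circ f)^*\neq f^*\circ g^*$ for a composition of rational maps in general.
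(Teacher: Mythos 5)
First, a point of comparison: the paper does not prove Proposition \ref{PROP:FUNCTORIAL} itself but imports it from \cite{DS_SUPER,AMERIK,ROEDER_FUNC}, so your proposal must be measured against those arguments. Your steps 1--4 are sound and are essentially the standard opening move: resolve $g$, build a common dominating manifold $W$, and reduce everything to the base-change identity $\sigma_*\tau^*=\tilde{f}^*q_*$ on $H^*(\tilde{Y};\C)$. That reduction correctly isolates where the content lies. The problem is the last step, which is exactly where the finite-fiber hypothesis has to do real work, and your argument there has a genuine gap.

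Two issues. A literal one first: $Y\setminus U$ as you define it contains the branch locus of $\tilde{f}$, and since $\tilde{f}$ is finite surjective, $\tilde{f}^{-1}(\text{branch locus})$ is the ramification divisor, a hypersurface in $\tilde{X}$ --- so the support you exhibit certainly can contain hypersurfaces. This is repairable: over the whole locus where $q$ is an isomorphism (ramified points of $\tilde{f}$ included), $\sigma$ is an isomorphism and $\tau=q^{-1}\circ\tilde{f}\circ\sigma$, so the two currents already agree there, and the true support lies in $\tilde{f}^{-1}(E)$ with $E:=q(\mathrm{Exc}(q))$ of codimension $\geq 2$, hence (by finiteness of $\tilde{f}$) of codimension $\geq 2$ in $\tilde{X}$. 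But --- and this is the essential gap --- the ``support plus bidegree'' vanishing then only works on $H^{1,1}$: a closed order-zero current of bidimension $(N-1,N-1)$ supported on a set of dimension $\leq N-2$ vanishes, whereas for $k\geq 2$ the residual current has bidimension $(N-k,N-k)$ and can perfectly well be a nonzero combination of integration currents over $(N-k)$-dimensional components of $\tilde{f}^{-1}(E)$, which contribute nontrivially in cohomology. This is precisely the distinction drawn in the remark following the proposition: finiteness off the indeterminacy set already gives $1$-stability by Bedford--Kim, yet fails for $k\geq 2$; the stronger hypothesis must be used to compute (not merely locate) the excess contributions over the codimension-$\geq 2$ bad set, which is done in \cite{ROEDER_FUNC} by a dimension count on the components of the triple correspondence in $X\times Y\times Z$ and in \cite{DS_SUPER} via super-potentials. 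A secondary technical point: $\tilde{f}^*q_*\gamma$ is not yet defined as a current, since $q_*\gamma$ is not smooth and pulling back currents under a ramified finite map requires justification; replacing $q_*\gamma$ by a smooth representative of its class restores the definition but destroys the support control your argument depends on.
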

\begin{remark}
Note that it follows from the criterion of Bedford-Kim \cite[Thm.
1.1]{BEDFORD_KIM2} that if $f_{X \setminus I_f} : X \setminus I_f \rightarrow
X$ is finite then $f$ is $1$-stable.  This is not sufficient for $k$-stability,
when $k > 1$, as shown in \cite[Prop. 6.1]{ROEDER_FUNC}.
This is why we use the stronger sufficient condition in Proposition \ref{PROP:FUNCTORIAL}.
\end{remark}

The following lemma (see, e.g. \cite[Lem. 19.1.2]{FULTON})  will be helpful when using (\ref{EQN:PULLBACK2}) to compute pullbacks.

\begin{lemma}\label{LEM:PUSH_FORWARD_VARIETIES}
Suppose that $f: X \rightarrow Y$ is a proper holomorphic map between projective manifolds.  For any irreducible subvariety $V \subseteq X$
we have
\begin{itemize}
\item[(i)] if $\dim(f(V)) = \dim(V)$, then $f_*([V]) = {\rm deg}_{\rm top}(f|_{V}) [f(V)]$,
where ${\rm deg}_{\rm top}(f|_{V})$ is the number of preimages under $f|_V$ of a generic point from $f(V)$.
\item[(ii)] Otherwise, $f_*([V]) = 0$.
\end{itemize}
\end{lemma}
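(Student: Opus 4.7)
The plan is to use Poincar\'e duality to translate the assertion into a statement about the homological pushforward $f_\#: H_{2k}(X;\C) \to H_{2k}(Y;\C)$, where $k = \dim V$, and then verify the formula at the level of fundamental classes of subvarieties. This is cleaner because the image of a subvariety is a subvariety, so the relevant classes are geometric.

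First, I would reduce to the case in which $V$ is smooth. Choose a resolution of singularities $\nu: \widetilde{V} \to V$; then $\{V\} = \nu_\#\{\widetilde{V}\}$ in $H_{2k}(X;\C)$, because $\nu$ is an isomorphism off the singular locus $V^{\mathrm{sing}}$, which has complex dimension strictly less than $k$, and any class in $H_{2k}$ supported on a set of real dimension less than $2k$ vanishes. Composing with $f$, we may replace the pair $(f, V)$ by $(f \circ \nu, \widetilde{V})$ and henceforth assume $V$ is smooth of dimension $k$.

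For case (ii), $f(V)$ is an irreducible subvariety of complex dimension less than $k$. The cycle $f_\#\{V\}$ is supported on $f(V)$, whose real dimension is less than $2k$; by the same triangulation argument as above, such a class must vanish in $H_{2k}(Y;\C)$, giving $f_*[V] = 0$. For case (i), the proper surjection $f|_V: V \to f(V)$ has equidimensional source and target, so by generic smoothness there is a Zariski open dense subset $U \subseteq f(V) \setminus f(V)^{\mathrm{sing}}$ over which $f|_V$ restricts to an unramified holomorphic covering of degree $d := \deg_{\mathrm{top}}(f|_V)$. Writing $W := f(V) \setminus U$, which has complex dimension less than $k$, the pushforward cycle $f_\#\{V\}$ coincides over $U$ with $d$ times integration on $U$, and the contribution over $W$ is null-homologous in $H_{2k}$ for the dimension reason already invoked. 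Hence $f_\#\{V\} = d\,\{f(V)\}$, and applying $\mathrm{PD}^{-1}$ yields the claimed formula.

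The main obstacle is the careful bookkeeping of singular loci of $V$ and $f(V)$; everything hinges on the standing principle that an algebraic subset of complex dimension strictly less than $k$ carries no nonzero class in $H_{2k}$, which lets us pass freely between smooth loci, resolutions, and the original (possibly singular) varieties. Once that principle is in hand, both cases reduce to the elementary covering-space computation on a Zariski open piece.
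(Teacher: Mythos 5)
Your argument is correct, but note that the paper does not prove this lemma at all: it is quoted with a citation to Fulton \cite[Lem.~19.1.2]{FULTON}, which records exactly the compatibility of the cycle class map with proper pushforward that you are reproving. Your sketch is the standard argument behind that reference --- transfer the statement to homology via $f_*=\PD_Y^{-1}\circ f_\#\circ\PD_X$, use that $H_{2k}(Z;\C)=0$ for a complex variety $Z$ of dimension $<k$ (so classes factoring through such $Z$ die, which handles case (ii) and all the singular/branch loci), and identify the multiplicity by restricting to the open set where $f|_V$ is an unramified $d$-sheeted cover --- and it is sound; the only point worth stating explicitly is that $H_{2k}(f(V);\C)\cong\C$ is generated by $\{f(V)\}$ for the irreducible $k$-dimensional variety $f(V)$, which is what lets you conclude $f_\#\{V\}=c\,\{f(V)\}$ and then read off $c=d$ on the open part.
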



\section{Moduli space}\label{SEC:MODULI_SPACE}

Let $P=\{p_1,\ldots,p_n\}$ be a finite set consisting of at least three points. The {\em{moduli space of genus $0$ curves marked by $P$}} is by definition
\[
\mathcal M_P:=\{\varphi:P\hookrightarrow\P^1\text{ up to postcomposition by M\"obius transformations}\}.
\]

\subsection{Projective space}\label{projective}
Every element of $\mathcal M_P$ has a representative $\varphi:P\hookrightarrow \P^1$ so that 
\[
\varphi(p_1)=0\text{ and }\varphi(p_2)=\infty,
\]
and the point $[\varphi]\in \mathcal M_P$ is determined by the $(n-2)$-tuple 
\[
\left(z_1,\ldots,z_{n-2}\right)\in\C^{n-2}\quad\text{where $z_i:=\varphi(p_{i+2})\quad \text{for }1\leq i\leq n-2$},
\]
up to scaling by a nonzero complex number. In other words, the point $[\varphi]\in\mathcal M_P$ is uniquely determined by $[z_1:\cdots:z_{N+1}]\in\P^N$, where $N:=n-3$. There are some immediate constraints on the complex numbers $z_i$ in order to ensure that $\varphi:P\hookrightarrow\P^1$ is injective. Indeed, $\mathcal M_P$ is isomorphic to the complement of $(n-1)(n-2)/2$ hyperplanes in $\P^N$. We state this in the following proposition. 
\begin{proposition}\label{modspaceiso}
Define $z_0:=0$. The moduli space $\mathcal M_P$ is isomorphic to $\P^N \setminus \Delta$ where $\Delta$ is the following collection of hyperplanes
\[
\Delta:=\left\{z_i=z_j\;|\;0\leq i<j\leq N+1\right\}.
\]
In particular, $\mathcal M_P$ is a complex manifold of dimension $N$. 
\end{proposition}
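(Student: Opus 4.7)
The plan is to construct an explicit inverse pair of morphisms between $\mathcal{M}_P$ and $\P^N\setminus\Delta$, directly implementing the parametrization sketched in the paragraph above the statement. Given $[\varphi]\in\mathcal{M}_P$, simple transitivity of $\mathrm{PGL}_2(\C)$ on ordered triples of distinct points in $\P^1$ yields a representative with $\varphi(p_1)=0$ and $\varphi(p_2)=\infty$, which is unique up to the stabilizer of $\{0,\infty\}$ in $\mathrm{PGL}_2(\C)$, namely the maps $z\mapsto\lambda z$ for $\lambda\in\C\setminus\{0\}$. Setting $z_i:=\varphi(p_{i+2})$ then determines the tuple $(z_1,\ldots,z_{N+1})$ up to a common nonzero scalar, so the assignment $\Phi([\varphi]):=[z_1:\cdots:z_{N+1}]$ is a well-defined point of $\P^N$; it is nonzero because $z_1=\varphi(p_3)\neq 0=\varphi(p_1)$ by injectivity of $\varphi$.

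Next, I would verify that the image of $\Phi$ is exactly $\P^N\setminus\Delta$. Injectivity of $\varphi$ on $P$ forces the values $\varphi(p_{i+2})=z_i$ to be mutually distinct and all distinct from $\varphi(p_1)=0$, which, under the convention $z_0:=0$, is precisely the family of conditions $z_i\neq z_j$ for $0\leq i<j\leq N+1$ cutting out $\P^N\setminus\Delta$. The remaining condition $z_i\neq\varphi(p_2)=\infty$ imposes nothing further, because in any affine representative the $z_i$ are by definition finite complex numbers.

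To obtain the inverse $\Psi:\P^N\setminus\Delta\to\mathcal{M}_P$, I would proceed symmetrically: given $[z_1:\cdots:z_{N+1}]\notin\Delta$, pick any affine representative and define $\varphi:P\hookrightarrow\P^1$ by $\varphi(p_1)=0$, $\varphi(p_2)=\infty$, $\varphi(p_{i+2})=z_i$. The hypothesis that the point avoids $\Delta$ is exactly what is needed for $\varphi$ to be injective, while rescaling the chosen representative amounts to postcomposing $\varphi$ by $z\mapsto\lambda z$, so $[\varphi]\in\mathcal{M}_P$ is independent of the choice of representative. From the construction, $\Phi\circ\Psi$ and $\Psi\circ\Phi$ are the identities; both maps are given by regular formulas in the standard affine charts of $\P^N$, so $\Phi$ is a biregular isomorphism of complex manifolds.

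A final bookkeeping check confirms the hyperplane count: $\Delta$ contains $N+1$ coordinate hyperplanes $z_j=0$ (the cases $i=0$) and $\binom{N+1}{2}$ hyperplanes $z_i=z_j$ with $1\leq i<j\leq N+1$, giving $(N+1)+\binom{N+1}{2}=\binom{N+2}{2}=(n-1)(n-2)/2$ after substituting $n=N+3$, matching the count in the proposition. The dimension statement then follows immediately, since $\mathcal{M}_P$ is a Zariski-open subset of the smooth $N$-dimensional variety $\P^N$. I do not expect any genuine obstacle; the only delicate point is separating the role of an affine representative $(z_1,\ldots,z_{N+1})\in\C^{N+1}\setminus\{0\}$ from that of its projective class, but this very distinction is exactly what makes both the well-definedness of $\Phi$ and the scaling-invariance needed for $\Psi$ come out cleanly.
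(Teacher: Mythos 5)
Your proof is correct and takes essentially the same route as the paper, whose entire proof is the remark that the statement ``follows immediately from the normalization above''; you are simply writing out in full the normalization $\varphi(p_1)=0$, $\varphi(p_2)=\infty$, $z_i=\varphi(p_{i+2})$ and its inverse. One small wording slip: the group acting on your normalized representatives is the pointwise stabilizer of the ordered pair $(0,\infty)$, not the set stabilizer of $\{0,\infty\}$ (which also contains $z\mapsto\lambda/z$), but you identify it correctly as $z\mapsto\lambda z$ and the argument is unaffected.
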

\begin{proof}
This follows immediately from the normalization above. 
\end{proof}
\medbreak
The following fact is straight-forward, but we state it explicitly as is will be used in subsequent sections. 
\begin{proposition}\label{PQprop}
Let $Q=\{q_1,\ldots,q_n\}$, and let $\iota:P\to Q$ be a bijection. Then $\iota$ induces an isomorphism 
\[
\iota^*:\mathcal M_Q\to \mathcal M_P
\]
\end{proposition}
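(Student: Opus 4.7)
The plan is to construct $\iota^*$ on representatives by pre-composition and then verify that it descends to a biholomorphism.

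First I would define the map at the level of representatives. Given a class $[\varphi] \in \mathcal M_Q$ represented by an injection $\varphi: Q \hookrightarrow \P^1$, set
\[
\iota^*[\varphi] := [\varphi \circ \iota].
\]
Since $\iota$ is a bijection and $\varphi$ is injective, $\varphi \circ \iota : P \hookrightarrow \P^1$ is an injection, so it determines an element of $\mathcal M_P$. To see that the class only depends on $[\varphi]$, observe that if $\varphi' = M \circ \varphi$ for a M\"obius transformation $M$, then $\varphi' \circ \iota = M \circ (\varphi \circ \iota)$, so the two representatives give the same class in $\mathcal M_P$. Hence $\iota^*: \mathcal M_Q \to \mathcal M_P$ is well-defined as a map of sets.

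Next I would construct an inverse. The bijection $\iota^{-1} : Q \to P$ gives a corresponding map $(\iota^{-1})^* : \mathcal M_P \to \mathcal M_Q$ by the same recipe, and the identities $(\varphi \circ \iota) \circ \iota^{-1} = \varphi$ and $(\psi \circ \iota^{-1}) \circ \iota = \psi$ show at once that $(\iota^{-1})^*$ is a two-sided inverse of $\iota^*$. Thus $\iota^*$ is a bijection.

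Finally I would promote this bijection to a biholomorphism using the projective coordinates of Proposition \ref{modspaceiso}. After writing $\iota(p_i) = q_{\sigma(i)}$ for a permutation $\sigma \in S_n$, a representative $\psi : Q \hookrightarrow \P^1$ normalized by $\psi(q_1)=0,\ \psi(q_2)=\infty$ has $\psi \circ \iota$ sending $p_i \mapsto \psi(q_{\sigma(i)})$. To put this back into the standard normalization for $\mathcal M_P$, one post-composes with the unique M\"obius transformation $M$ satisfying $M(\psi(q_{\sigma(1)})) = 0$ and $M(\psi(q_{\sigma(2)})) = \infty$. Both the permutation of homogeneous coordinates coming from $\sigma$ and the M\"obius normalization (whose coefficients are rational in the original coordinates with denominators nonvanishing on $\mathcal M_Q \subset \P^N$) are regular morphisms of $\P^N$, so $\iota^*$ is a morphism of complex manifolds; the same argument for $(\iota^{-1})^*$ shows the inverse is also a morphism.

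The only non-routine step is the last one, verifying holomorphicity, since it requires checking that the rational normalization involved in expressing $\iota^*$ in coordinates has no poles on $\mathcal M_Q$. This follows because the points $\psi(q_{\sigma(1)})$ and $\psi(q_{\sigma(2)})$ are distinct on $\mathcal M_Q$ (by the injectivity of $\psi$ together with the description of $\mathcal M_Q$ as the complement of the hyperplanes $\Delta$), so the required M\"obius transformation exists and varies holomorphically.
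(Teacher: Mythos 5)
Your proposal is correct and takes the same approach as the paper: the paper's entire proof consists of the observation that $\iota^*[\varphi] := [\varphi\circ\iota]$ does the job, which is exactly your starting point. Your additional verifications (well-definedness modulo M\"obius transformations, the inverse $(\iota^{-1})^*$, and holomorphicity in the coordinates of Proposition \ref{modspaceiso}) simply make explicit what the paper treats as straightforward.
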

\begin{proof}
Let $m\in \mathcal M_Q$, and let $\varphi:Q\hookrightarrow\P^1$ be a representative of $m$. Then $\iota^*(m)\in\mathcal M_P$ is represented by $\varphi \circ \iota:P\hookrightarrow\P^1$. 
\end{proof}
\medbreak
The moduli space $\mathcal M_P$ is not compact.  

\subsection{The Deligne-Mumford compactification}\label{DMCsubsect}
A {\em{stable curve of genus $0$ marked by $P$}} is an injection $\varphi:P\hookrightarrow C$ where $C$ is a connected algebraic curve whose singularities are ordinary double points (called {\em{nodes}}), such that 
\begin{enumerate}
\item each irreducible component is isomorphic to $\P^1$, 
\item\label{graph} the graph, $G_C$, whose vertices are the irreducible components and whose edges connect components intersecting at a node, is a tree
\item for all $p\in P$, $\varphi(p)$ is a smooth point of $C$, and 
\item the number of marked points plus nodes on each irreducible component of $C$ is at least three. 
\end{enumerate}
The marked stable curves $\varphi_1:P\hookrightarrow C_1$ and
$\varphi_2:P\hookrightarrow C_2$ are isomorphic if there is an isomorphism
$\mu:C_1\to C_2$ such that $\varphi_2=\mu\circ \varphi_1$. The set of stable
curves of genus $0$ marked by $P$ modulo isomorphism can be given the structure
of a smooth projective variety \cite{knudsen,knudsenmumford}, called the
{\em{Deligne-Mumford compactification}}, and denoted by $\overline{\mathcal
M}_P$. The moduli space $\mathcal M_P$ is an open Zariski dense subset of
$\overline{\mathcal M}_P$.  In this subsection we will state some of the
well-known properties of $\overline{\mathcal M}_P$.


Let $P=\{p_1,\ldots,p_n\}$ be a set with at least three elements. The
compactification divisor of $\mathcal M_P$ in $\overline{\mathcal M}_P$ is the
set of all (isomorphism classes) of marked stable curves with {at least} one
node.  Generic points of  $\overline{\mathcal M}_P \setminus \mathcal M_P$
consist of the  (isomorphism classes) of marked stable curves
$\varphi:P\hookrightarrow C$ with at {exactly} one node.  For each such
generic boundary point, taking $\varphi^{-1}$ of the connected components of $C
\setminus \{\text{node}\}$ induces a partition of $P$ into two sets $S \cup
S^c$, where $S^c:=P\setminus S$. The set of generic boundary components inducing a given partition of $P$
is an irreducible quasiprojective variety and its closure in
$\overline{\mathcal M}_P$ is an algebraic hypersurface denoted $\D^S \equiv \D^{S^c}$, and it is a {boundary divisor} of $\overline{\mathcal M}_P$.

If $|S| = n_1$ and $|S^c| = n_2$ then there is an isomorphism
\[
\D^S \approx \overline{\mathcal M}_{0,n_1+1}\times \overline{\mathcal M}_{0,n_2+1}.
\]
The $n_1+1$ points in the first factor consist of the $n_1$ points of $S$ together with the node, and similarly for the second factor.

Any $k$ distinct boundary divisors $\D^{S_1},\ldots, \D^{S_k}$ intersect transversally and if this intersection is nonempty, the result is an irreducible codimension $k$ boundary stratum.  This corresponds to the set of marked stable curves which induces a stable partition of $P$ into $k+1$ blocks.  There is an analogous description in terms of trees. 

\noindent {\bf Marked Stable Trees.} Let $\varphi:P\hookrightarrow C$ be a marked stable curve of genus $0$. As mentioned in point (\ref{graph}) above, there is a graph $G_C$ associated to $\varphi:P\hookrightarrow C$, which is a tree. Let $V_C$ be the set of vertices of $G_C$; the marking $\varphi:P\hookrightarrow C$ induces a map $\varphi_*:P\to V_C$, sending $p\in P$ to the vertex corresponding to the irreducible component which contains $\varphi(p)$. We will call
\[
T_{\varphi:P\hookrightarrow C}:=(G_C, \, \, \varphi_*:P\to V_C)
\]
the {\em marked stable tree} associated to the marked stable curve $\varphi:P\hookrightarrow C$. More generally, any graph $G$ which is a tree, together with a map $\psi:P\to V_G$ will be a marked stable tree if for all $v\in V_G$, 
\[
\mathrm{degree}(v)+|\psi^{-1}(v)|\geq 3.
\]
Given two generic points $\varphi:P\hookrightarrow C$ and
$\varphi':P\hookrightarrow C'$ of the (nonempty) intersection $\D^{S_1}\cap
\cdots\cap \D^{S_k}$, the trees $T_{\varphi:P\hookrightarrow C}$ and
$T_{\varphi':P\hookrightarrow C'}$ are {\em isomorphic} in the following sense:
there is a graph isomorphism $\beta:G_C\to G_{C'}$ so that
$\varphi'_*=\beta\circ \varphi_*$. The stratum $\D^{S_1}\cap \cdots\cap \D^{S_k}$
can be labeled by the isomorphism class of $T_{\varphi:P\hookrightarrow C}$. 

It is well-known that there is a bijection between the following sets:
\[
\{\text{codimension $k$ boundary strata in $\overline{\mathcal M}_P$}\}\quad\longleftrightarrow
\]
\[
\{\text{isomorphism classes of marked stable trees with $k+1$ vertices}\}
\]
\begin{lemma}\label{intersectingdivslemma}
Let $Z$ be a boundary stratum of codimension $k$ in $\overline{\mathcal M}_P$. There is a unique set $\{\D^{S_1}, \ldots, \D^{S_k}\}$ of boundary divisors so that $Z=\D^{S_1}\cap \cdots\cap \D^{S_k}$. 
\end{lemma}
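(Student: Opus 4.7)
The plan is to use the bijection (stated just before the lemma) between codimension $k$ boundary strata of $\overline{\mathcal M}_P$ and isomorphism classes of marked stable trees with $k+1$ vertices. Let $T_Z = (G_Z, \psi_Z \colon P \to V(G_Z))$ be the marked stable tree of $Z$; since $G_Z$ is a tree on $k+1$ vertices it has exactly $k$ edges $e_1, \ldots, e_k$. I will show that the boundary divisors whose intersection equals $Z$ are exactly the $k$ divisors attached to these edges, which gives both existence and uniqueness.

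For existence, I would show that each edge $e_i$ of $T_Z$ determines a boundary divisor containing $Z$. Deleting $e_i$ from $G_Z$ splits it into two subtrees, and taking $\psi_Z$-preimages of their vertex sets gives a two-block partition $P = S_i \cup S_i^c$. A short stability check (using $\deg(v) + |\psi_Z^{-1}(v)| \geq 3$ at every vertex, and in particular at the leaves of any branching subtree between two edges) shows that distinct edges yield distinct partitions, so the divisors $\D^{S_1}, \ldots, \D^{S_k}$ are pairwise distinct. A generic $\varphi \colon P \hookrightarrow C$ in $Z$ has dual tree $T_Z$ and therefore $k$ nodes, one per edge; smoothing the $k-1$ nodes coming from the $e_j$ with $j \ne i$ produces a one-nodal stable curve whose two components carry the marked points $S_i$ and $S_i^c$. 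Hence $\varphi \in \D^{S_i}$, so $Z \subseteq \D^{S_1} \cap \cdots \cap \D^{S_k}$. By the transversality statement recalled from the text, the right-hand side is an irreducible codimension-$k$ boundary stratum, which together with $Z$ being irreducible of codimension $k$ forces equality.

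For uniqueness, suppose $Z = \D^{T_1} \cap \cdots \cap \D^{T_k}$ for some choice of boundary divisors. Each $\D^{T_j}$ contains $Z$, so it suffices to show that every boundary divisor $\D^S$ with $\D^S \supseteq Z$ equals some $\D^{S_i}$. Given such a $\D^S$, a generic $\varphi \in Z$ lies in $\D^S$, so the curve $C$ with dual tree $T_Z$ admits a one-parameter deformation — smoothing all but one node — to a one-nodal curve whose induced partition of $P$ is $\{S, S^c\}$. Since the nodes of $C$ are in bijection with the edges of $T_Z$, and any two-block partition of $P$ produced by contracting all but one node is obtained by deleting a single edge, we get $S = S_i$ for some $i$. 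Thus $\{\D^{T_1}, \ldots, \D^{T_k}\} \subseteq \{\D^{S_1}, \ldots, \D^{S_k}\}$, and both sets have cardinality $k$, so they coincide.

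The main obstacle is the geometric assertion that $\D^S \supseteq Z$ forces $S$ to arise by an edge deletion from $T_Z$; this is the substantive half of the bijection between codimension-$k$ strata and $(k+1)$-vertex marked stable trees, and must be extracted from the local structure of $\overline{\mathcal M}_P$ near a generic nodal curve in $Z$. Once this is granted, the lemma reduces to the purely combinatorial observation that a tree on $k+1$ vertices has exactly $k$ edges and hence exactly $k$ distinct two-block vertex partitions obtained by edge deletion.
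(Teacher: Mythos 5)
Your proof is correct and follows essentially the same route as the paper: the paper's own proof just cites the remarks preceding the lemma (the bijection between codimension-$k$ boundary strata and marked stable trees with $k+1$ vertices, together with the description of intersections of boundary divisors), and your argument is a fleshed-out version of exactly that, identifying the divisors containing $Z$ with the $k$ edges of its dual tree and counting them. The step you flag as "granted" (that $\D^S \supseteq Z$ forces $\{S,S^c\}$ to arise from an edge of $T_Z$) is part of the well-known stratification structure the paper itself invokes without proof, so by the paper's own standard it is not a gap.
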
 
\begin{proof}
This result follows immediately from the remarks above. 
\end{proof}
\medbreak



\subsection{Keel's theorem}

In \cite{KEEL}, Keel exhibits generators and relations for the cohomology ring
of $\overline{\mathcal M}_P$. Let $[\D^S]$ denote the fundamental cohomology class of the boundary divisor
$\D^S$.

\begin{theorem}[Keel, \cite{KEEL}]\label{keelthm}
The cohomology ring $H^*(\overline{\mathcal M}_P;\C)$ is the ring 
\[
\Z\left[[\D^S]\;:\; S\subseteq P, \; \, |S|,|S^c|\geq 2\right]
\]
modulo the following relations:
\begin{enumerate}
\item $[\D^S]=[\D^{S^c}]$
\item For any four distinct $p_i,p_j,p_k,p_l\in P$, we have 
\[
\sum_{\stackrel{p_i,p_j\in S}{p_k,p_l\in S^c}} [\D^S]=\sum_{\stackrel{p_i,p_k\in S}{p_j,p_l\in S^c}}[\D^S] =\sum_{\stackrel{p_i,p_l\in S}{p_j,p_k\in S^c}} [\D^S].
\]
\item $[\D^S]\cupprod [\D^T]=0$ unless one of the following holds:
\[
S\subseteq T,\quad T\subseteq S, \quad S\subseteq T^c,\quad T^c\subseteq S.
\]
\end{enumerate}
\end{theorem}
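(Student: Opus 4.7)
The plan is to proceed by induction on $n = |P|$, exploiting the inductive structure of $\overline{\cal M}_P$ as a tower of blow-ups of smooth centers in smooth varieties. The base cases are immediate: $\overline{\cal M}_P$ is a point when $n = 3$ and is isomorphic to $\P^1$ when $n = 4$, and in the latter case the three boundary points (one for each partition of a $4$-element set into two pairs) are linearly equivalent, which is precisely relation~(2) in dimension one. For the inductive step I fix a distinguished point $p_\star \in P$ and consider the forgetful morphism $\pi: \overline{\cal M}_P \to \overline{\cal M}_{P \setminus \{p_\star\}}$. This morphism factors as a sequence of blow-ups of disjoint smooth subvarieties of the universal curve over $\overline{\cal M}_{P \setminus \{p_\star\}}$, and the boundary divisors $\D^S \subset \overline{\cal M}_P$ are accounted for by pullbacks of boundary divisors from the base together with exceptional divisors and strict transforms of tautological sections.

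At each blow-up step I apply the classical formula for the cohomology of a blow-up: if $\widetilde Y \to Y$ is the blow-up of $Y$ along a smooth center $Z$ of codimension $d$ with exceptional divisor $E$, then $H^*(\widetilde Y;\C)$ is generated as an $H^*(Y;\C)$-algebra by $[E]$, with relations determined by the Chern classes of the normal bundle $N_{Z/Y}$. Iterating this identification yields generators labeled precisely by the boundary divisors $[\D^S]$, together with a family of candidate relations. Relation~(1) is built into the labeling convention $\D^S = \D^{S^c}$. Relation~(3) follows from the observation that two boundary divisors whose partitions are pairwise incompatible correspond to combinatorially incompatible marked stable trees, so the divisors are disjoint; since distinct boundary divisors meet transversally, their cup product vanishes. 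Relation~(2) is deduced by composing $\pi$ with further forgetful morphisms to produce a map $\overline{\cal M}_P \to \overline{\cal M}_{\{p_i,p_j,p_k,p_l\}} \cong \P^1$ and pulling back the unique linear equivalence among the three boundary points of $\P^1$; the resulting identities read exactly as the three equal sums appearing in relation~(2).

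The main obstacle is \emph{completeness}: showing that the three listed families generate all relations among the $[\D^S]$. My approach is to exhibit an explicit additive basis for the abstract ring defined by the generators and relations in the statement, indexed by compatible (nested) families of subsets of $P$ in bijection with boundary strata of $\overline{\cal M}_P$, and to show by induction on $n$ that every monomial in the $[\D^S]$ can be reduced to this basis using only relations~(1)--(3). The dimensions of the graded pieces of the abstract ring can then be computed combinatorially, and compared with the Betti numbers of $\overline{\cal M}_P$ obtained from the stratification by isomorphism classes of marked stable trees (producing, for example, the entries of Table~\ref{TABLE_COH_DIMS}). A matching dimension count, combined with the surjectivity of the natural map from the abstract ring onto $H^*(\overline{\cal M}_P;\C)$ established in the first two paragraphs, completes the proof. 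This combinatorial reduction to a canonical monomial basis is the technical heart of the argument; the rest is a geometric unpacking of the iterated blow-up structure and the standard blow-up formula for cohomology.
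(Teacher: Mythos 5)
First, a point of order: the paper does not prove this statement — it is quoted verbatim from \cite{KEEL} — so your proposal can only be measured against Keel's published argument. Your overall architecture (induction on $|P|$ via an iterated blow-up presentation, verification that relations (1)--(3) hold by pulling back the linear equivalence of the three boundary points of $\overline{\mathcal M}_{0,4}\cong\P^1$ and by disjointness of incompatible divisors, then a separate completeness argument) is indeed the shape of Keel's proof, and the easy half — that the $[\D^S]$ generate and satisfy (1)--(3) — is essentially correct as you describe it. But two steps, as written, would fail.

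First, the forgetful morphism $\pi:\overline{\mathcal M}_P\to\overline{\mathcal M}_{P\setminus\{p_\star\}}$ has one-dimensional fibers (it \emph{is} the universal curve), so it cannot ``factor as a sequence of blow-ups'': compositions of blow-ups are birational. Keel's induction instead runs through the birational morphism $\overline{\mathcal M}_{0,n+1}\to\overline{\mathcal M}_{0,n}\times\overline{\mathcal M}_{0,4}$, which he exhibits as a composition of blow-ups along smooth codimension-two centers; the blow-up formula is applied to that map. This is repairable, but it is not what you wrote. Second, and more seriously, your completeness step rests on a false premise: no additive basis of the ring ``in bijection with boundary strata'' can exist, because the fundamental classes of boundary strata span $H^{k,k}$ but are far from independent. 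Already for $|P|=5$ there are ten boundary divisors while $\dim H^{1,1}=5$; in general compare Table \ref{TABLE_COH_DIMS} with Table \ref{TABLE_STRATA_DIMS} of this paper. For the same reason, the Betti numbers are not ``obtained from the stratification'' by counting strata: the open strata are products of spaces $\mathcal M_{0,m}$ and contribute through a nontrivial recursion. So the dimension count you would need to close the argument is precisely the hard combinatorial content you have not supplied. Keel sidesteps it: at each blow-up the cohomology of the new space is computed \emph{together with all of its relations} by the blow-up formula (in terms of the Chern classes of the normal bundles of the centers), and the work consists in showing those relations are consequences of (1)--(3); no canonical monomial basis indexed by strata is ever produced.
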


Implicit in Keel's Theorem is the assertion that the codimension $k$ boundary strata are complete intersections:

\begin{corollary}\label{completeintersection} We have
\begin{eqnarray*}
[\D^{S_1} \cap \cdots \cap \D^{S_k}] = [\D^{S_1}] \cupprod \cdots \cupprod [\D^{S_k}].
\end{eqnarray*}
\end{corollary}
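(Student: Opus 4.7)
The plan is to derive this from two ingredients: (a) the standard fact from intersection theory that, for smooth subvarieties $V_1,\dots,V_k$ of a smooth projective manifold $X$ intersecting transversally (so that their codimensions add at every intersection point), one has $[V_1\cap\cdots\cap V_k]=[V_1]\cupprod\cdots\cupprod[V_k]$ in $H^{*}(X;\C)$; and (b) the transversality statement already recorded in Section \ref{DMCsubsect}, namely that any $k$ distinct boundary divisors $\D^{S_1},\dots,\D^{S_k}$ of $\overline{\cal M}_P$ meet transversally, and when the intersection is nonempty it is a smooth irreducible codimension-$k$ boundary stratum.

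First I would treat the case $\D^{S_1}\cap\cdots\cap\D^{S_k}\neq\emptyset$. Each divisor $\D^{S_i}$ is smooth (it is itself a product $\overline{\mathcal M}_{0,n_1+1}\times\overline{\mathcal M}_{0,n_2+1}$), the ambient space $\overline{\cal M}_P$ is smooth, and transversality of the collection is the content of ingredient (b). Ingredient (a) then applies verbatim and yields the desired equality of cohomology classes.

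Next I would handle the edge case $\D^{S_1}\cap\cdots\cap\D^{S_k}=\emptyset$, in which case the left-hand side is zero. Emptiness of the intersection is equivalent to the existence of some pair $S_i,S_j$ failing all four compatibility conditions $S_i\subseteq S_j$, $S_j\subseteq S_i$, $S_i\subseteq S_j^c$, $S_j^c\subseteq S_i$, since in tree-language no marked stable tree can simultaneously realize the partitions $S_i\sqcup S_i^c$ and $S_j\sqcup S_j^c$ unless one such containment holds. By relation (3) in Keel's Theorem \ref{keelthm} this forces $[\D^{S_i}]\cupprod[\D^{S_j}]=0$, so the right-hand side vanishes as well.

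The main (and essentially only) obstacle is thus the clean invocation of ingredient (a); after that, the proof reduces to the two short cases above. No new calculation with Keel's presentation is needed, since the combinatorial incompatibility of sets is exactly matched by Keel's vanishing relation, and transversality of the boundary divisors is part of the standard stratified description of $\overline{\cal M}_P$.
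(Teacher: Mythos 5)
Your argument is correct, and it fills in what the paper leaves implicit: the paper states Corollary \ref{completeintersection} with no proof at all, asserting only that it is ``implicit in Keel's Theorem'' (Keel identifies the Chow and cohomology rings of $\overline{\mathcal M}_P$ in such a way that strata classes are monomials in the $[\D^S]$). Your route --- the transversality of distinct boundary divisors recorded in Section \ref{DMCsubsect} together with the classical fact that a transverse intersection of smooth subvarieties has fundamental class equal to the cup product of their classes --- is exactly the geometric justification the authors are relying on, so in substance you are reconstructing the intended argument rather than taking a different one. One small caveat concerns your empty-intersection case: the sentence you offer justifies only the implication ``some pair $S_i,S_j$ is incompatible $\Rightarrow$ the intersection is empty,'' whereas what your argument actually uses is the converse, namely that pairwise compatibility of $S_1,\ldots,S_k$ forces $\D^{S_1}\cap\cdots\cap\D^{S_k}\neq\emptyset$. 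That converse is true and standard (pairwise compatible subsets form a nested family, hence define a marked stable tree with $k+1$ vertices and therefore a nonempty stratum), but it should be cited as such; alternatively you can avoid Keel's relation (3) altogether by observing that cohomology classes of closed subvarieties with empty common intersection have vanishing cup product, or simply note that the corollary, as framed in the paper, concerns boundary strata and hence only the nonempty case.
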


We now construct Kapranov's space $X^N$ which is isomorphic to $\overline{\mathcal M}_P$. 

\subsection{Kapranov's Theorem}\label{kapsect}
We may choose coordinates and identify $\mathcal M_P$ with $\P^N \setminus \Delta$ as
stated in Proposition \ref{modspaceiso}. In this concrete setting, there is a
description of $\overline{\mathcal{M}}_P$ as a {\em{sequential blow up}} of
$\P^N$ due to Kapranov \cite{KAPRANOV}.

Normalize to identify $\mathcal M_P$ with $\P^N \setminus \Delta$ as in Proposition \ref{modspaceiso}, and consider the following subsets of $\P^N$. Let 
\[
A^0:=\left\{[1:0:\cdots:0],[0:1:0:\cdots:0],\ldots,[0:\cdots:0:1],[1:1:\cdots:1]\right\},
\]
and for $1\leq i\leq N-2$, let $A^i$ be the set of all $\binom{N+2}{i+1}$ projective linear subspaces of dimension $i$ in $\P^N$, which are spanned by collections of $i+1$ of distinct points in $A^0$. Let $X_0:=\P^N,$ and for each $0\leq i\leq N-2$ define $\alpha^i:X_{i+1}\to X_i$ to be the blow up of of $X_i$ along the proper transform $\widetilde A^i$ of $A^i$ under $\alpha^{0}\circ \cdots \circ \alpha^{i-1}$. 
\begin{theorem}[Kapranov, \cite{KAPRANOV}]\label{kapranovthm}
Let $P=\{p_1,\ldots,p_n\}$ contain at least three points. Normalize to identify $\mathcal M_P$ with $\P^N \setminus \Delta$ where $N=n-3$ as in Proposition \ref{modspaceiso}. Then the Deligne-Mumford compactification $\overline{\mathcal M}_P$ is isomorphic to the space $X^N:=X_{N-1}$ constructed above. 
\end{theorem}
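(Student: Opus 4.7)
My plan is to extend the isomorphism $\mathcal{M}_P \cong \P^N \setminus \Delta$ of Proposition \ref{modspaceiso} by first constructing a birational morphism $\Phi: \overline{\mathcal M}_P \to \P^N$ and then factoring it through the iterated blow-up tower $X^N = X_{N-1} \to \cdots \to X_0 = \P^N$ using the universal property of blow-ups.

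First, I would build $\Phi$ from the geometric fact underlying the whole construction: any $N+2$ points in general linear position in $\P^N$ lie on a unique rational normal curve of degree $N$, which is abstractly a $\P^1$. Taking the configuration $A^0$ as these $N+2$ distinguished points, each $x \in \P^N \setminus \Delta$ determines such a rational normal curve through $A^0 \cup \{x\}$, hence a $\P^1$ with $n = N+3$ marked points, i.e.\ a point of $\mathcal{M}_P$; this yields the inverse of $\Phi$ on the open part. To extend $\Phi$ across the boundary, one treats stable degenerations by contracting unstable components of the universal curve and evaluating, or equivalently defines $\Phi$ via the linear system of a Kapranov $\psi$-class, which realizes $\overline{\mathcal M}_P$ as a birational model of $\P^N$.

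Next, I would factor $\Phi$ through the blow-up tower inductively for $i = 0, 1, \ldots, N-2$. At the $i$-th stage, assuming a partial lift $\Phi_i: \overline{\mathcal M}_P \to X_i$ has been constructed, I would check that the preimage under $\Phi_i$ of the current blow-up center $\widetilde{A^i} \subset X_i$ is a Cartier divisor, namely a union of boundary divisors $\D^S$. The matching is combinatorial: the projective subspace spanned by an $(i+1)$-subset of $A^0$ should correspond (after translating via the labeling $A^0 \leftrightarrow P \setminus \{p_1, p_2\}$) to the boundary divisor $\D^S$ where precisely the complementary marked points collide onto a common node. The universal property of the blow-up then produces the lift $\Phi_{i+1}: \overline{\mathcal M}_P \to X_{i+1}$, and iterating yields the desired morphism $\Psi: \overline{\mathcal M}_P \to X^N$. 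This combinatorial bookkeeping, supported by the description of the boundary strata via marked stable trees in Section \ref{DMCsubsect} (codimension-$k$ strata correspond to trees with $k+1$ vertices, matching nested chains of centers among $A^0, \ldots, A^{N-2}$), is the main technical obstacle; in particular one must verify reducedness and that the center is blown up in the correct order of ascending dimension.

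Finally, I would conclude that $\Psi$ is an isomorphism as follows. Both $\overline{\mathcal M}_P$ and $X^N$ are smooth projective varieties of dimension $N$, and by construction $\Psi$ restricts to the tautological isomorphism on the dense opens $\mathcal{M}_P \cong \P^N \setminus \Delta$, so $\Psi$ is birational. Using the explicit fiber description from the inductive construction, one checks that $\Psi$ has finite fibers everywhere; Zariski's Main Theorem applied to the birational morphism $\Psi$ between normal varieties then upgrades this to an isomorphism. Alternatively, one can count boundary divisors on each side via Keel's Theorem (Theorem \ref{keelthm}) and the dimension of $H^2$ to confirm that the surjection of boundary strata is a bijection, precluding any exceptional contraction.
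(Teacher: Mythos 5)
This is one of the places where the paper offers no proof at all: Theorem \ref{kapranovthm} is quoted from Kapranov's paper, and the authors explicitly remark that the proof ``is a bit subtle, establishing the isomorphism using the space of Veronese curves,'' deferring to \cite{KAPRANOV} and \cite{Harvey}. Your outline is precisely that Veronese-curve strategy (the unique rational normal curve of degree $N$ through the $N+3$ points $A^0\cup\{x\}$ gives the identification on the open part, and the universal property of blow ups factors the resulting birational morphism through the tower $X_{N-1}\to\cdots\to X_0$), so you are reconstructing Kapranov's argument rather than diverging from anything in the paper. As a strategy it is sound, and your closing step is fine: a projective birational morphism between smooth varieties whose exceptional locus is empty (equivalently, which contracts no divisor, by purity of the exceptional locus over a locally factorial target) is an isomorphism, and the Picard-rank count via Keel's presentation does preclude contracted divisors.

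That said, as a proof it is still a skeleton: the two steps you label as the main technical obstacles are exactly the content of Kapranov's theorem and are only named, not carried out. Extending $\Phi$ across the boundary (via stabilization/contraction of the universal curve, or the very ampleness and base-point-freeness properties of the $\psi$-class) and verifying inductively that $\Phi_i^{-1}(\widetilde{A^i})$ is a reduced Cartier divisor equal to the expected union of boundary divisors are nontrivial; without them the universal property cannot be invoked. One concrete slip in your combinatorial dictionary: you write $A^0\leftrightarrow P\setminus\{p_1,p_2\}$, but $|A^0|=N+2=n-1$ while $|P\setminus\{p_1,p_2\}|=n-2$. With the paper's normalization ($\varphi(p_1)=0$, $\varphi(p_2)=\infty$, $z_i=\varphi(p_{i+2})$), the correct bijection is $A^0\leftrightarrow P\setminus\{p_2\}$: the center $0=z_{i_1}=\cdots=z_{i_j}$ corresponds to $D^S$ with $S=\{p_1,p_{i_1+2},\ldots,p_{i_j+2}\}$ and the center $z_{i_1}=\cdots=z_{i_j}$ to $S=\{p_{i_1+2},\ldots,p_{i_j+2}\}$ (see Section \ref{SEC:PULLVBACK_SQ1}), so $p_2$ is the ``free'' marked point. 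Getting this dictionary right is essential for the Cartier verification you propose.
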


The proof of Kapranov's Theorem is a bit subtle, establishing the isomorphism
using the space of Veronese curves.  For a different perspective, closer to that of the present paper, we refer the reader to the paper of Harvey
and Lloyd-Philipps \cite{Harvey}.

\begin{remark}\label{divnotation}
Via the isomorphism $\overline{\mathcal M}_P\approx X^N$ from Theorem \ref{kapranovthm}, we will use Theorem \ref{keelthm}  to find appropriate
bases for the cohomology groups $H^{k,k}(X^N;\C)$ in Section \ref{SEC:DYN_DEG}. To this end, we adopt the following notation. Let $\D^S\subseteq \overline{\mathcal M}_P$ be a boundary divisor. We will use the notation $D^S\subseteq X^N$ to denote the image of $\D^S$ under the explicit isomorphism $\overline{\mathcal M}_P\approx X^N$ from Theorem \ref{kapranovthm}.
\end{remark}

For $|P|=3$, $\mathcal M_P=\overline{\mathcal M}_P$ is a point. For $|P|=4$, $\mathcal M_P$ is isomorphic to $\P^1 \setminus \{0,1,\infty\}$, and $\overline{\mathcal M}_P$ is isomorphic to $\P^1$. 

\begin{example}\label{5points}
Let $P=\{p_1,p_2,p_3,p_4,p_5\}$. Following Proposition \ref{modspaceiso}, $\mathcal M_P$ is isomorphic to $\P^2 \setminus \Delta$, where 
\[
\Delta=\{z_1=0,z_2=0,z_3=0,z_1=z_2,z_2=z_3,z_1=z_3\}.
\]
The space $\overline{\mathcal M}_P$ is isomorphic to $X^2$, which is equal to $\P^2$ blown up at the four points comprising $A^0$:
\[
\{[1:0:0],[0:1:0],[0:0:1],[1:1:1]\}.
\]
There are $10$ boundary divisors in $X^2$: the proper transforms of the six lines comprising $\Delta$, plus the four exceptional divisors. The ten boundary divisors correspond to the $\binom{5}{2}$ stable partitions of $P$ into two blocks.  The space $X^2$ is depicted in Figure~\ref{FIG_X2}.

\begin{figure}[h]
\scalebox{0.8}{
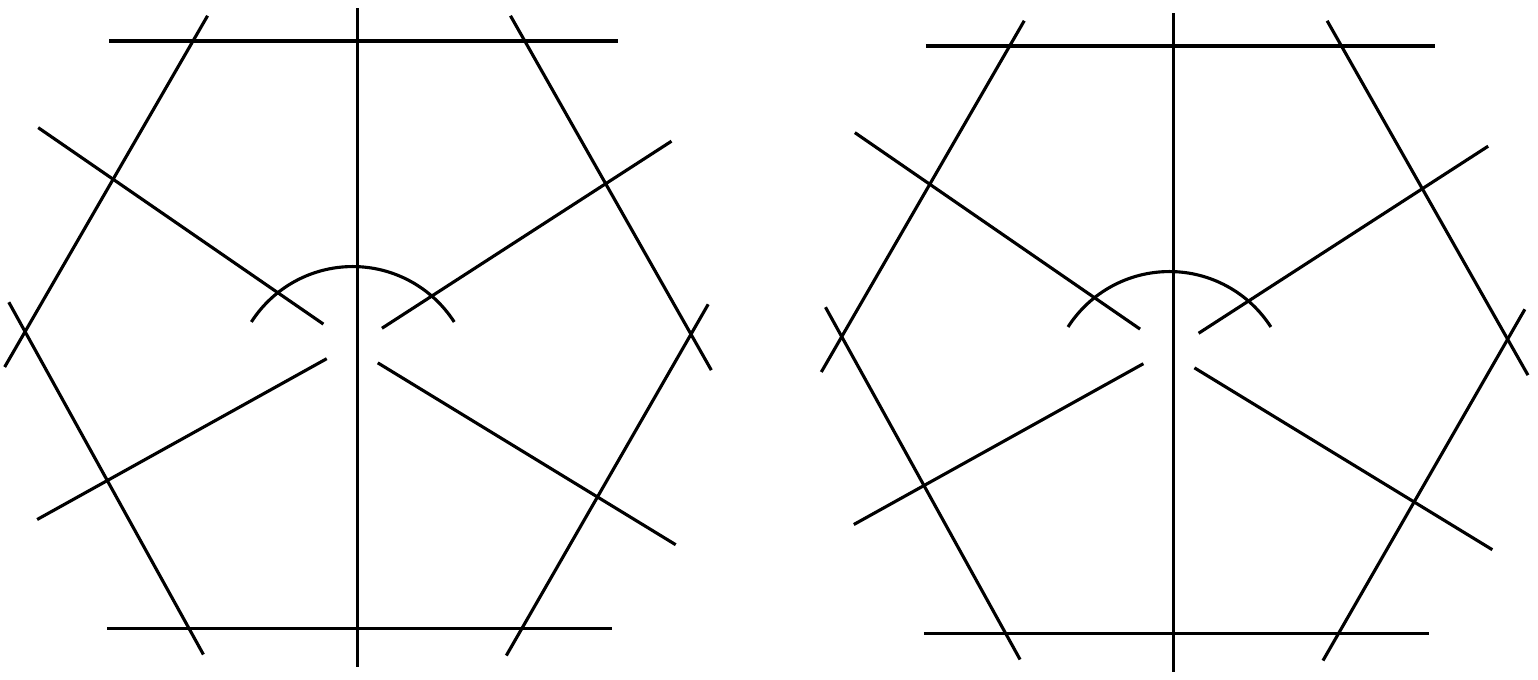
}
\caption{\label{FIG_X2} Depiction of $X^2$.  Left: boundary divisors are labeled as proper transforms of lines in $\P^2$ and exceptional divisors.  Right: boundary divisors are labeled according to Remark \ref{divnotation}.}
\end{figure}
\end{example}

\begin{example}
If $|P| = 6$, then $\overline{\mathcal M}_P$ is isomorphic to $X^3$, the sequential blow up of $\P^3$ where
\[
A^0=\{[0:0:0:1],[0:0:1:0],[0:1:0:0],[1:0:0:0],[1:1:1:1]\},
\]
$A^1$ is the set of $10 = \binom{5}{2}$ lines spanned by pairs of points in $A^0$.  A depiction of $X^3$ is shown in Figure~\ref{FIG_X3}

\begin{figure}[h]
\begin{center}
\scalebox{0.7}{
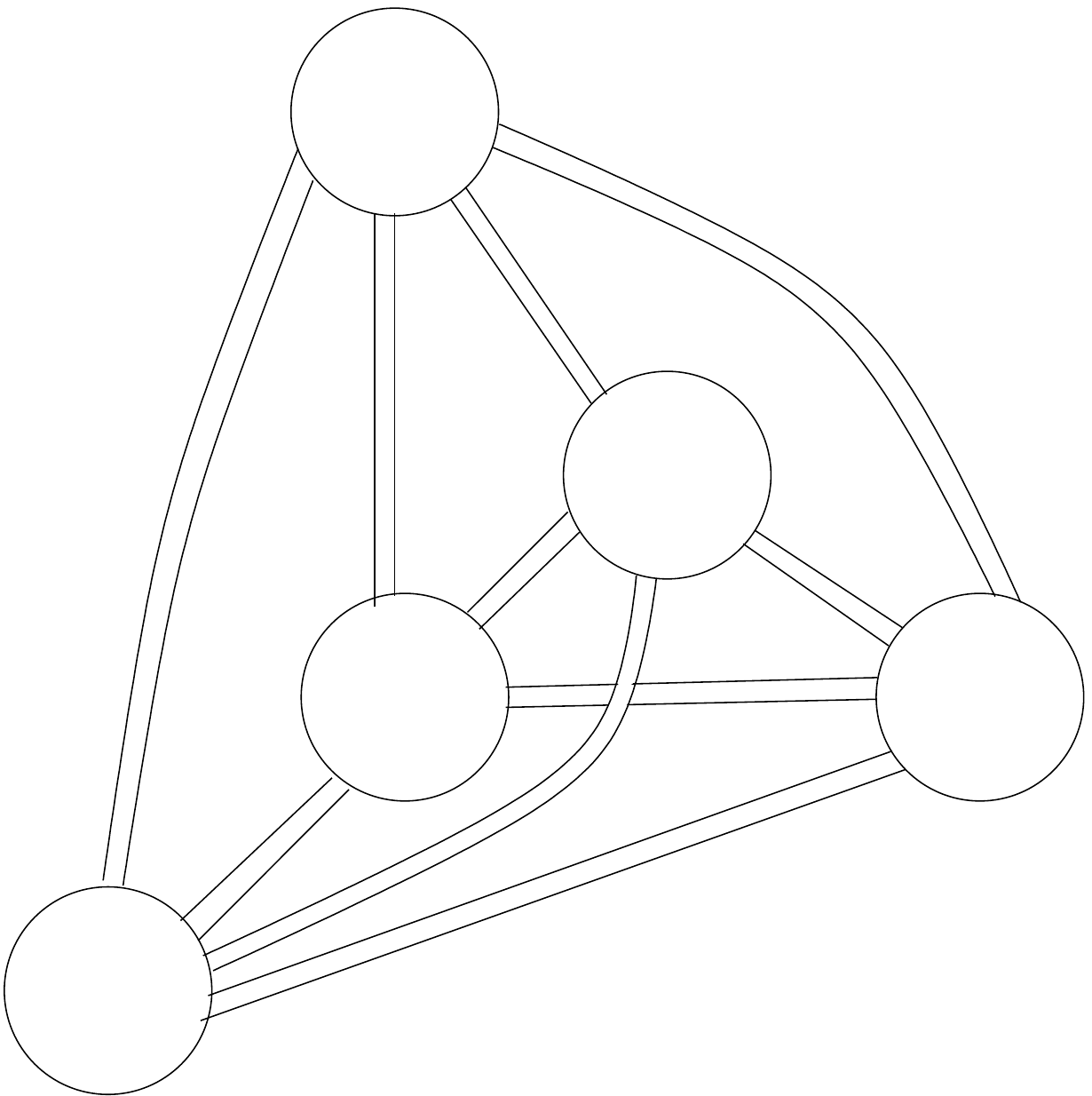
}
\end{center}
\caption{\label{FIG_X3} Depiction of $X^3$ with all boundary divisors corresponding
to exceptional divisors over $A^0$ and over proper transforms of lines from $A^1$ labeled.  (The remaining
$10$ boundary divisors corresponding to hyperplanes in $\P^3$ are not labeled.)}
\end{figure}
\end{example}

\begin{example}
If $|P|=7$, then $\overline{\mathcal M}_P$ is isomorphic to $X^4$, the sequential blow up of $\P^4$ where 
{\footnotesize
\begin{eqnarray*}
A^0=\{[0:0:0:0:1],[0:0:0:1:0],[0:0:1:0:0],[0:1:0:0:0],[1:0:0:0:0],[1:1:1:1:1]\},
\end{eqnarray*}
}
$A^1$ is the set of $15$ lines spanned by pairs of points in $A^0$, and $A^2$ is the set of $20$ planes spanned by triples of points in $A^0$. Let $[x_1:x_2:x_3:x_4:x_5]\in\P^4$, and let $L$ be the element of $A^2$ spanned by 
\[
\{[0:0:0:0:1],[0:0:0:1:0],[0:0:1:0:0]\},
\]
 that is, the locus in $\P^4$  given by $x_1=x_2=0$,
and let $M$ be the element in $A^2$ spanned by 
\[
\{[0:1:0:0:0],[1:0:0:0:0],[1:1:1:1:1]\},
\]
that is, the locus in $\P^4$ given by $x_3=x_4=x_5$.
Note that $L$ and $M$ intersect at the point $[0:0:1:1:1]$ which is not in $A^0 \cup A^1$, so {\em{a priori}}, the order of the blow ups in the construction above might matter in constructing the space $X^4$. However, this is not the case since $L$ and $M$ intersect transversally. Indeed, this phenomenon occurs in the general setting for $|P|$ arbitrary, but these intersections are always transverse and are therefore irrelevant in the blow up construction (see Lemma \ref{LEM:COMMUTING_BLOW_UPS} and Lemma \ref{LEM:GOODCENTERS1}). 
\end{example}

\subsection{Automorphisms of $X^N$} 
The automorphism group of $X^N$ is clearly isomorphic to the automorphism group of $\overline{\mathcal M}_P$. We will study automorphisms of $\overline{\mathcal M}_P$ that extend automorphisms of $\mathcal M_P$. If $|P|=4$, then the automorphisms of $\overline{\mathcal M}_P\approx \P^1$ that extend the automorphisms of $\mathcal M_P$ consist of the M\"obius transformations that map the set of three points comprising the boundary of $\mathcal M_P$ in $\overline{\mathcal M}_P$ to itself; that is, $\mathrm{Aut}({\mathcal M}_P)$ is isomorphic to the permutation group on three letters. If $|P|>4$, then $\mathrm{Aut}({\mathcal M}_P)$ is isomorphic to $S_P$, the group of permutations of elements of the set $P$ (see  \cite{xavierreference}, and compare with Proposition \ref{PQprop}). 
\begin{proposition}\label{autos}
Suppose that $|P|>4$, and let $\rho\in S_P$. Then the automorphism $g_\rho:\mathcal M_P\to \mathcal M_P$ extends to an automorphism $g_\rho:\overline{\mathcal M}_P\to \overline{\mathcal M}_P$. 
\end{proposition}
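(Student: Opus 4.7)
The plan is to define $g_\rho$ directly on the Deligne-Mumford compactification by relabeling the marked points. For a marked stable curve $\varphi: P \hookrightarrow C$ I would set
\[
g_\rho([\varphi: P \hookrightarrow C]) := [\varphi \circ \rho^{-1} : P \hookrightarrow C].
\]
Well-definedness at the level of isomorphism classes follows directly from the definition in Subsection~\ref{DMCsubsect}: the curve $C$, its nodes, and the image $\varphi(P) \subset C$ are unchanged under relabeling, so conditions (1)--(4) continue to hold, and any isomorphism $\mu: C_1 \to C_2$ with $\varphi_2 = \mu \circ \varphi_1$ still satisfies $\varphi_2 \circ \rho^{-1} = \mu \circ (\varphi_1 \circ \rho^{-1})$. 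Restricted to $\mathcal M_P$, this construction agrees with the isomorphism produced by Proposition~\ref{PQprop} with $Q = P$ and $\iota = \rho^{-1}$, so it genuinely extends the given automorphism of $\mathcal M_P$, and $g_{\rho^{-1}}$ is evidently a set-theoretic two-sided inverse on $\overline{\mathcal M}_P$.

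The substantive point is to promote this bijection to a morphism of algebraic varieties. The cleanest route is to invoke the universal property of $\overline{\mathcal M}_P$ as the fine moduli space representing the functor of flat families of stable $P$-marked genus $0$ curves \cite{knudsen,knudsenmumford}: for any such family $\pi: \mathcal C \to B$ with sections $(\sigma_p)_{p \in P}$, the reindexed sections $(\sigma_{\rho^{-1}(p)})_{p \in P}$ define another stable $P$-marked family on the same base, and this operation is manifestly functorial in $B$; by Yoneda it is induced by a morphism $g_\rho: \overline{\mathcal M}_P \to \overline{\mathcal M}_P$ whose effect on closed points matches the construction above. Applying the same argument to $\rho^{-1}$ gives a two-sided inverse morphism, so $g_\rho$ is biregular.

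The main obstacle, relative to what has been developed so far in the paper, is this appeal to the Knudsen-Mumford representability machinery, which has not been spelled out in the body of the text. A more self-contained alternative would proceed by induction on $|P|$: the extended $g_\rho$ permutes boundary divisors via $\D^S \mapsto \D^{\rho(S)}$, and under the product decomposition $\D^S \cong \overline{\mathcal M}_{0,|S|+1} \times \overline{\mathcal M}_{0,|S^c|+1}$ (in which the extra marked point corresponds to the node and is not permuted) the restriction of $g_\rho$ to each stratum factors as a product of the analogous relabeling automorphisms on smaller moduli spaces. Combined with the smoothness of $\overline{\mathcal M}_P$, continuity across boundary strata, and the inductive hypothesis, this would yield regularity without invoking the full moduli functor; however, the functorial proof is considerably shorter and I would favor it for exposition.
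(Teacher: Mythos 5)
Your proposal is correct, but it takes a genuinely different route from the paper. The paper proves the extension via Kapranov's Theorem: it re-runs the normalization of Proposition \ref{modspaceiso} with the labels permuted (sending $p_{\rho^{-1}(1)}\mapsto 0$, $p_{\rho^{-1}(2)}\mapsto\infty$, etc.), observes that the resulting iterated blow up $Y^M$ of $\P^M$ is the same construction as $X^N$ applied to relabeled data, and concludes that $\rho$ amounts to a change of coordinates on $\overline{\mathcal M}_P\approx X^N$, hence induces a biregular extension of $g_\rho$. You instead define $g_\rho$ intrinsically on isomorphism classes of marked stable curves by $[\varphi]\mapsto[\varphi\circ\rho^{-1}]$ and promote it to a morphism via the universal property of $\overline{\mathcal M}_P$ as a fine moduli space (reindexing the sections of a stable family and invoking Yoneda). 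This is a valid argument: in genus $0$ stable marked curves have no nontrivial automorphisms, so the moduli functor is indeed representable by $\overline{\mathcal M}_{0,n}$, as in Knudsen--Mumford; the only caveat, which you acknowledge, is that this representability machinery is not developed in the paper, whereas the paper's proof leans only on the Kapranov model it has already set up --- a model that moreover directly exhibits how $g_\rho$ interacts with the boundary divisors and exceptional divisors, which is exactly what is exploited later (Proposition \ref{PROP:PULL_BACK_AUTO}). Your functorial argument is cleaner and coordinate-free; the paper's is more concrete and better adapted to the subsequent cohomological computations. (A trivial bookkeeping point: make sure your convention $\sigma_p\mapsto\sigma_{\rho^{-1}(p)}$ on families really matches $[\varphi]\mapsto[\varphi\circ\rho^{-1}]$ on points, so that the induced action on boundary divisors agrees with $D^S\mapsto D^{\rho^{-1}(S)}$ used later; this affects only the direction of the permutation, not the validity of the extension.)
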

\begin{proof}
Let $P=\{p_1,\ldots,p_n\}$, and let $\rho\in S_P$. The permutation $\rho$ relabels the points in $P$, which effectively just changes coordinates on $ \overline{\mathcal M}_P$. This is evident using Kapranov's theorem from Section \ref{kapsect}. Indeed, in the construction of $X^N$, we began with a choice of normalization: we identified $\mathcal{M}_P$ with a $\P^N \setminus \Delta$ by choosing a representative $\varphi:P\hookrightarrow\P^1$ so that $\varphi(p_1)=0$, and $\varphi(p_2)=\infty$, and setting $z_i:=\varphi(p_{i+2})$ for $1\leq i\leq n-2$, we identified the point $[\varphi]\in\mathcal M_P$ with the point $[z_1:\cdots:z_{N+1}]\in\P^N$. To build $X^N,$ we performed the appropriate sequential blow up of this copy of $\P^N$. 

Carrying out the same construction, but taking the permutation into account, we normalize so that for the representative $\varphi:P\hookrightarrow\P^1$ 
\[
\varphi(p_{\rho^{-1}(1)})=0,\quad\text{and}\quad\varphi(p_{\rho^{-1}(2)})=\infty,
\]
and by setting $z_i:=\varphi(p_{\rho^{-1}(i)})$, for $1\leq i\leq n-2$, we identify the point $[\varphi]\in\mathcal M_P$ with the point $[z_1:\cdots:z_{M+1}]\in\P^M$, where $M:=n-3$. Build a space $Y^M$ which is the sequential blow up of $\P^M$ as prescribed in Section \ref{kapsect} (we have changed notation so as not to confuse the two constructions of the `same' space). The spaces $X^N$ and $Y^M$ are clearly isomorphic, and we see that $\rho$ induces an automorphism $g_\rho:\overline{\mathcal M}_P\to \overline{\mathcal M}_P$ which extends $g_\rho:\mathcal M_P \to \mathcal M_P$. 
\end{proof}
\medbreak

\section{The maps $f_\rho:X^N\dashrightarrow X^N$}\label{maps_sect}

As previously mentioned, the maps $f_\rho:X^N\dashrightarrow X^N$ will be a composition of two maps: an automorphism $g_\rho:X^N\to X^N$ and a map $s:X^N\dashrightarrow X^N$, which we now define. 

Let $P=\{p_1,p_2,\ldots,p_n\}$, and normalize to identify $\mathcal M_P$ with $\P^N \setminus \Delta$ as in Proposition \ref{modspaceiso}, and via Kapranov's construction (Theorem \ref{kapranovthm}), build the space $X^N$ as a sequential blow up of $\P^N$. Consider the squaring map $s_0:\P^N\to \P^N$ given by 
\[
s_0:[z_1:\cdots:z_{N+1}]\mapsto [z_1^2:\cdots z_{N+1}^2],
\]
which is clearly holomorphic. Note that the critical locus of $s_0$ consists precisely of the union of hyperplanes 
\[
{\rm Crit}(s_0) = \bigcup_{i=1}^{N+1} \{z_i=0\}.
\]
Moreover, every component of $\Delta$ is mapped to itself by $s_0$. 

The map $s:X^N\dashrightarrow X^N$ is simply the lift of $s_0:\P^N\to\P^N$
under the map $\mathcal A:=\alpha_0\circ\cdots\circ\alpha_{N-2}:X_{N-1}\to X_0$
where $X^N:=X_{N-1}$ and $X_0:=\P^N$ in the Kapranov construction (see Theorem
\ref{kapranovthm}). The map $s:X^N\dashrightarrow X^N$ is not holomorphic
(unless $N=1$, or equivalently, $|P|=4$); indeed, there are points of
indeterminacy arising from extra preimages of varieties that were previously
blown up. For example, consider $P=\{p_1,\ldots,p_5\}$ (as in Example
\ref{5points}). The space $X^2$ is $\P^2$ blown up at
$[0:0:1],[0:1:0],[1:0:0]$, and $[1:1:1]$. Let $\zeta\in
(s_0^{-1}([1:1:1]) \setminus \{[1:1:1]\})$. Then $s:X^2\dashrightarrow X^2$ has a point
of indeterminacy at $\alpha_0^{-1}(\zeta)$. 
 In fact, for any $N > 1$ the indeterminacy set for $s: X^N \dashrightarrow X^N$
has dimension $N-2$.  Notice that $\{z_1 = z_2 = -z_3\} \in s_0^{-1}(\{z_1 =
z_2 = z_3\})$, with $\{z_1 = z_2 = z_3\} \in A^{N-2}$.  Since $\{z_1 = z_2 =
-z_3\}$ is not a center of blow up, it's proper transform under $\mathcal{A}$ is in the indeterminacy locus $I_s$.

%
%

By Proposition \ref{autos}, every permutation $\rho\in S_P$ induces an automorphism $g_\rho:\overline{\mathcal M}_P\to \overline{\mathcal M}_P$, which maps the compactification divisor of $\mathcal M_P$ to itself (since $g_\rho$ extends an automorphism of $\mathcal M_P$). We abuse notation and denote the corresponding automorphism of $X^N\to X^N$ as $g_\rho:X^N\to X^N$. 

For any $\rho\in S_P$, define the map $f_\rho:=g_\rho\circ s:X^N\dashrightarrow
X^N$.   This map also has indeterminacy locus of dimension $N-2$,
since $I_{g_\rho \circ s} = I_s$. We now prove that the maps
$f_\rho:X^N\dashrightarrow X^N$ are algebraically stable.

\section{Algebraic Stability}\label{SEC:STABILITY}

The goal of this section is to prove the following proposition, which will imply Theorem \ref{THM_FUNCTORIAL_SEMIGROUP} and will be used to compute the linear maps $(f_\rho)^*:H^{k,k}(X^N;\C)\to H^{k,k}(X^N;\C)$.

\begin{proposition}\label{goodresolution} For any $N \geq 1$ there is an $N$-dimensional projective manifold $Y^N$ and holomorphic maps $\pr : Y^N \rightarrow X^N$ and $\widetilde{s}: Y^N \rightarrow X^N$
that make the following diagram commute (wherever $s \circ \pr$ is defined),
\begin{eqnarray}\label{EQN:DESIRED_DIAGRAM}
\xymatrix{Y^N \ar[d]^{\pr} \ar[dr]^{\widetilde{s}}  & \\
X^N \ar @{-->}[r]^{s} & X^N }
\end{eqnarray}
with $\widetilde{s}^{-1}(x)$  a finite set for every $x \in X^N$.
\end{proposition}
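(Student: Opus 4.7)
The plan is to construct $Y^{N}$ as a further iterated blow-up of $X^{N}$ so that the composition $s_{0}\circ\mathcal{A}\circ\pr$ can be lifted, one step at a time, through the Kapranov tower $\mathcal{A} = \alpha_{0}\circ\cdots\circ\alpha_{N-2}: X^{N}\to\P^{N}$. Since $s_{0}:\P^{N}\to\P^{N}$ is holomorphic and finite, $\phi := s_{0}\circ\mathcal{A}: X^{N}\to\P^{N}$ is already holomorphic, so the only thing to do is to lift $\phi\circ\pr$ through each $\alpha_{i}$. By the universal property of blow-ups (the tool credited to Truong in the Acknowledgments), a morphism $Z\to X_{i}$ lifts to a morphism $Z\to X_{i+1}$ if and only if the scheme-theoretic preimage of the proper transform $\widetilde{A}^{i}$ of $A^{i}$ is a Cartier divisor on $Z$. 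I will arrange this iteratively.

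The centers $A^{i}$ are by construction spanned by the points of $A^{0}$, namely the standard coordinate points and the diagonal $[1{:}\cdots{:}1]$. For each $L\in A^{i}$ the preimage $s_{0}^{-1}(L)$ is a union of linear subspaces of $\P^{N}$ of the same dimension as $L$, obtained from $L$ by coordinatewise sign flips (and including $L$ itself). Kapranov's construction blows up only $L$ and not the other components. I define $\pr: Y^{N}\to X^{N}$ as the sequential blow-up along the proper transforms in $X^{N}$ of these other ``sign-flipped'' subspaces, taken in order of increasing $i$ and mirroring the order used in Kapranov's construction of $X^{N}$. After the $i$-th stage the scheme-theoretic preimage of $\widetilde{A}^{i}$ in the accumulated space is Cartier, so the universal property provides a lift through $\alpha_{i}$. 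Iterating through $i=0,\ldots,N-2$ yields the desired holomorphic $\widetilde{s}:Y^{N}\to X^{N}$ making \eqref{EQN:DESIRED_DIAGRAM} commute.

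Two things remain to verify. First, that $Y^{N}$ is a smooth projective variety: at each stage the centers to be blown up are smooth because they are proper transforms of linear subspaces, and they meet each other and the previously created exceptional divisors transversally. Via the sign-flip symmetries of $s_{0}$ this reduces to the analogous transversality facts for Kapranov's own centers, i.e.\ Lemmas \ref{LEM:COMMUTING_BLOW_UPS} and \ref{LEM:GOODCENTERS1}. Second, that $\widetilde{s}$ has finite fibers: from $\mathcal{A}\circ\widetilde{s} = s_{0}\circ\mathcal{A}\circ\pr$ and finiteness of $s_{0}$, any fiber $\widetilde{s}^{-1}(x)$ maps under $\mathcal{A}\circ\pr$ into the finite set $s_{0}^{-1}(\mathcal{A}(x))$, so the only way a fiber could be infinite is if $\widetilde{s}$ contracts some positive-dimensional subvariety of $Y^{N}$ lying over a point of this finite set. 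By construction, every such positive-dimensional locus is an exceptional divisor (either coming from $X^{N}$ via $\mathcal{A}$, or one of the divisors created by the blow-ups defining $\pr$), and each is mapped onto the corresponding Kapranov exceptional divisor of the target $X^{N}$ rather than being collapsed.

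The main obstacle I expect is the combinatorial bookkeeping of the iterated construction: specifying at each stage which smooth centers to blow up, checking transversality with the accumulated exceptional divisors from the Kapranov tower and from the earlier stages of $Y^{N}$, and verifying that the preimage of $\widetilde{A}^{i}$ really does become Cartier at the right step. The key structural input that keeps this tractable is the sign-flip symmetry of $s_{0}$, which permutes the ``extra'' centers in a way that inherits the transversality properties Kapranov established for his own centers.
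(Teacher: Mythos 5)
Your overall strategy --- enlarge the space by blowing up the ``extra'' components of $s_0^{-1}(A^i)$ and then lift through the Kapranov tower using the universal property of blow-ups --- is the same as the paper's. But your specific construction of $Y^N$ has a concrete problem, and it is exactly the problem the paper's proof is organized around. You build $Y^N$ on top of $X^N$, i.e.\ you perform all of Kapranov's blow-ups first and only then blow up ``the proper transforms in $X^N$ of the sign-flipped subspaces.'' The sign-flipped centers are not in general position with respect to Kapranov's centers: already for $N=3$ the point $[1:-1:1:1]\in s_0^{-1}([1:1:1:1])$ lies on the line $\{z_1=z_3=z_4\}\in A^1$, which is one of Kapranov's centers. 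Its proper transform in $X^3$ is therefore empty (the point is swallowed by a higher-dimensional center that your ordering blows up first), so your prescription does not define a center there; and this incidence is neither transverse nor of the nested type covered by Lemmas \ref{LEM:COMMUTING_BLOW_UPS} and \ref{LEM:GOODCENTERS1}, so the claim that ``sign-flip symmetry'' reduces everything to Kapranov's transversality statements fails --- the sign flips preserve the total configuration $B^\bullet=s_0^{-1}(A^\bullet)$ but not $A^\bullet$ itself, and the problematic intersections are precisely between a sign-flipped center and an original one. The paper avoids this by building $Y^N$ directly from $\P^N$, blowing up at stage $i$ \emph{all} of $B_i=s_0^{-1}(A_i)$ (so $[1:-1:1:1]$ is blown up before the line through it), and then constructing $\pr:Y^N\to X^N$ by the universal property; even then, verifying the Cartier hypothesis at each stage requires a nontrivial induction tracking the extra divisors $D_l=\overline{\pr_i^{-1}(\widetilde A_l)\setminus\widehat A_l}$ together with the condition $\widehat A_l\not\subseteq D_m$. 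None of this bookkeeping appears in your sketch, and with your ordering it is not clear it can be carried out without first changing the centers you blow up (e.g.\ to the full fibers of $\AA$ over the offending components).

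The finiteness of the fibers of $\widetilde s$ is also asserted rather than proved. Saying that each positive-dimensional exceptional locus ``is mapped onto the corresponding Kapranov exceptional divisor rather than being collapsed'' is precisely the statement that needs an argument: over a point $w\in s_0^{-1}(z)$ with $z$ lying on several centers, the fiber $\BB^{-1}(w)$ is a product of lower-dimensional spaces $Y^{l-1}\times X^{j_1-2}\times\cdots$, and one must identify $\widetilde s$ restricted to that fiber as a product $\widetilde s^{\,l-1}\times{\rm id}\times\cdots$ of lower-dimensional maps before finiteness follows by induction on $N$; this is the entire content of Lemma \ref{finitefibers}. Your observation that $\widetilde s^{-1}(x)$ sits over the finite set $s_0^{-1}(\AA(x))$ is the correct first reduction, but the fiberwise analysis is where the actual work lies.
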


\noindent
{\em Proof of Theorem \ref{THM_FUNCTORIAL_SEMIGROUP}, supposing Proposition \ref{goodresolution}:}
Using the factorization $f_\rho = g_\rho \circ {s}$ with $g_\rho$ an automorphism of $X^N$, we obtain the following diagram:
\begin{eqnarray*}
\xymatrix{
Y^N \ar[d]^{\pr} \ar[dr]^{\widetilde{s}} \ar@/^1pc/[drr]^{\widetilde{f}_\rho}  &  & \\
X^N \ar @{-->}[r]^{s} & X^N \ar[r]^{g_\rho} & X^N.
}
\end{eqnarray*}
Since $\widetilde{s}$ has finite fibers, so does $\widetilde{f}_\rho:= g_\rho \circ \widetilde{s}$.
It follows from Proposition \ref{PROP:FUNCTORIAL} that for any rational map $h:X^N \dashrightarrow X^N$
we have $(h \circ f_\rho)^* = f_\rho^* \circ h^*$ on all cohomology groups.  By induction it follows that
for any word $f_{\rho_k} \circ \cdots \circ f_{\rho_1}$ we have
\begin{eqnarray*}
(f_{\rho_k} \circ \cdots \circ f_{\rho_1})^* = f_{\rho_1}^* \circ \cdots \circ f_{\rho_k}^*
\end{eqnarray*}
on $H^{k,k}(X^N;\C)$ for $1\leq k \leq N$.
This implies that the semi-group $\mathcal{F}^N$ acts functorially on all of the cohomology groups of $X^N$.
\qed

%

In order to prove Proposition \ref{goodresolution}, we will use the {universal property of blow ups}, following the treatment in \cite{EISENBUD_HARRIS,GW_Book}.
Let $X$ be any scheme and $Y \subseteq X$ a subscheme.  Recall that $Y$ is a Cartier subscheme if it is locally the zero locus of a single regular
function. 

{\bf Universal Property.} Let $X$ be a scheme and let $Y$ be a closed subscheme.  { The blow up of $X$ along $Y$} is a scheme $\widetilde{X}\equiv {\rm BL}_Y(X)$ and a morphism $\pi: \widetilde{X} \rightarrow X$
such that $\pi^{-1}(Y)$ is a Cartier subscheme and which is universal with respect to this property: if $\pi': \widetilde{X}' \rightarrow X$
is any morphism such that $(\pi')^{-1}(Y)$ is a Cartier subscheme, then there is a unique morphism $g: \widetilde{X}' \rightarrow \widetilde{X}$ such
that $\pi' = \pi \circ g$.

Recall that the Cartier subscheme $E = \pi^{-1}(Y)$ is called the {exceptional divisor} of the blow up and $Y$ is called the center of the blow up.

There is an immediate corollary of the definition, see for example \cite[Prop.~13.91]{GW_Book}:

\begin{corollary}\label{COR:BLOW_UP_PULL_BACK}
Let $X$ be a scheme, let $Y$ be a closed subscheme, and let $\pi:~{\rm BL}_Y(X)~\rightarrow~X$ be the blow up of $X$ along $Y$.  Let $f:~X'~\rightarrow~X$ be
any morphism of schemes.  Then, there exists a unique morphism ${\rm BL}_Y(f):~{\rm BL}_{f^{-1}(Y)}(X')~\rightarrow~{\rm BL}_Y(X)$ making the following
diagram commute:
\begin{eqnarray*}
\xymatrix{
 {\rm BL}_{f^{-1}(Y)}(X') \ar[rr]^{{\rm BL}_Y(f)} \ar[d] & & {\rm BL}_Y(X) \ar[d]  \\
X' \ar[rr]^{f} & & X
}
\end{eqnarray*}
\end{corollary}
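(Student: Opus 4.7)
The plan is to derive this as a direct application of the universal property stated just before the corollary. Write $\pi: {\rm BL}_Y(X) \to X$ for the given blow up and $\pi': {\rm BL}_{f^{-1}(Y)}(X') \to X'$ for the blow up of $X'$ along $f^{-1}(Y)$. The strategy is to apply the universal property of $\pi$ to the composite morphism $f \circ \pi': {\rm BL}_{f^{-1}(Y)}(X') \to X$; this will produce a unique lift ${\rm BL}_Y(f): {\rm BL}_{f^{-1}(Y)}(X') \to {\rm BL}_Y(X)$ satisfying $\pi \circ {\rm BL}_Y(f) = f \circ \pi'$, which is exactly the commutativity required in the diagram.

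The only hypothesis of the universal property that needs checking is that $(f \circ \pi')^{-1}(Y)$ is a Cartier subscheme of ${\rm BL}_{f^{-1}(Y)}(X')$. First I would note that, set-theoretically and scheme-theoretically,
\begin{equation*}
(f \circ \pi')^{-1}(Y) \;=\; (\pi')^{-1}\bigl(f^{-1}(Y)\bigr).
\end{equation*}
By the very definition of $\pi'$ as the blow up of $X'$ along $f^{-1}(Y)$, the preimage $(\pi')^{-1}(f^{-1}(Y))$ is a Cartier subscheme of ${\rm BL}_{f^{-1}(Y)}(X')$ (it is the exceptional divisor of $\pi'$). This is the entire content of the verification.

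With this condition established, the universal property of the blow up $\pi$ applied to $f \circ \pi'$ yields a unique morphism $g: {\rm BL}_{f^{-1}(Y)}(X') \to {\rm BL}_Y(X)$ with $\pi \circ g = f \circ \pi'$; I would set ${\rm BL}_Y(f) := g$. Uniqueness of the lift is inherited directly from the uniqueness clause of the universal property, so no separate argument is needed.

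The only place where there could be subtlety is the scheme-theoretic identity $(f \circ \pi')^{-1}(Y) = (\pi')^{-1}(f^{-1}(Y))$, since one needs it as an equality of subschemes (via pullback of ideal sheaves) and not merely as sets; this is a standard compatibility of pullback of ideals under composition, and I would simply cite it from \cite{EISENBUD_HARRIS} or \cite{GW_Book} rather than expand on it. Everything else is a formal unwinding of the universal property, which is why the authors label the result as an immediate corollary.
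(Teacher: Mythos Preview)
Your proposal is correct and follows exactly the approach the paper has in mind: the authors present this corollary without proof, stating only that it is ``an immediate corollary of the definition'' and citing \cite[Prop.~13.91]{GW_Book}. Your argument---apply the universal property of $\pi$ to $f\circ\pi'$ after observing that $(f\circ\pi')^{-1}(Y)=(\pi')^{-1}(f^{-1}(Y))$ is the exceptional divisor of $\pi'$, hence Cartier---is precisely the standard unwinding of that universal property.
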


In our context, $X$ will be an projective manifold and $Y \subseteq X$ will
be an projective submanifold.  

The following is well-known, but we include a proof for completeness.

\begin{lemma}\label{LEM:COMMUTING_BLOW_UPS}Suppose that $X$ is an projective manifold and $Y,Z \subseteq X$ are projective submanifolds that intersect transversally (i.e. $T_x Y + T_x Z = T_x X$ at any $x \in Y \cap Z$).  Then,
\begin{itemize}
\item[(1)]  If $\pi: {\rm BL}_Y(X) \rightarrow X$ is the blow up, the proper transform $\widetilde{Z} = \overline{\pi^{-1}(Z \setminus Y)}$ and total transform $\pi^{-1}(Z)$ coincide.
\item[(2)]  ${\rm BL}_{\widetilde{Z}}({\rm BL}_Y(X)) \cong {\rm BL}_{\widetilde{Y}}({\rm BL}_Z(X))$.
\end{itemize}
\end{lemma}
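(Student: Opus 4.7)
The plan is to prove part (1) by a direct local computation in coordinates adapted to the transverse pair $(Y,Z)$, and then to deduce part (2) formally from (1) via the universal property of blow ups. The key insight is that transversality forces the scheme-theoretic preimage $\pi_Y^{-1}(Z)$ to coincide with the proper transform $\widetilde{Z}$, which is precisely what is needed to make the iterated blow up symmetric in $Y$ and $Z$.

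For part (1), the assertion is trivial away from $Y \cap Z$: over $Z \setminus Y$ the map $\pi$ is an isomorphism, and over $Y \setminus Z$ both transforms are empty in a neighborhood. At a point $x \in Y \cap Z$, transversality allows a choice of local analytic coordinates $(y_1,\ldots,y_N)$ centered at $x$ in which $Y = \{y_1 = \cdots = y_k = 0\}$ and $Z = \{y_{k+1} = \cdots = y_{k+m} = 0\}$, where $k = {\rm codim}\,Y$ and $m = {\rm codim}\,Z$. In any standard affine chart of ${\rm BL}_Y(X)$ the defining ideal of $Z$ is unchanged, as it involves only coordinates disjoint from those affected by the blow up. Hence $\pi^{-1}(Z)$ is smooth, does not contain the exceptional divisor, and agrees with $\widetilde{Z}$.

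For part (2), set $W := {\rm BL}_{\widetilde{Z}}({\rm BL}_Y(X))$ and $W' := {\rm BL}_{\widetilde{Y}}({\rm BL}_Z(X))$. By (1), $\pi_Y^{-1}(Z) = \widetilde{Z}$ and $\pi_Z^{-1}(Y) = \widetilde{Y}$. In $W$, the preimage of $Z$ under the composition $W \to X$ is a Cartier subscheme because it is the pullback of $\widetilde{Z}$, which is the center of the second blow up; likewise, the preimage of $Y$ in $W$ is the pullback of the exceptional divisor of ${\rm BL}_Y(X) \to X$, hence Cartier. Consequently, the universal property of ${\rm BL}_Z(X)$ produces a unique morphism $W \to {\rm BL}_Z(X)$ over $X$, and the universal property of $W'$, applied with this morphism and the Cartier subscheme $\widetilde{Y} = \pi_Z^{-1}(Y) \subset {\rm BL}_Z(X)$ whose preimage in $W$ we have just shown to be Cartier, produces a unique lift $W \to W'$ over ${\rm BL}_Z(X)$. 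Interchanging the roles of $Y$ and $Z$ yields a morphism $W' \to W$, and the uniqueness clauses of the two universal properties force the compositions $W \to W' \to W$ and $W' \to W \to W'$ to be the identity morphisms, so $W$ and $W'$ are canonically isomorphic. The main obstacle I anticipate is the careful bookkeeping of Cartier subschemes through the iterated blow ups; part (1) is the substantive geometric input that makes this bookkeeping succeed, by converting what is a priori a total transform into a proper transform that can play the role of a new blow up center.
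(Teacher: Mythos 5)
Your proof is correct, and it splits into one part that mirrors the paper and one that does not. For part (1) you argue exactly as the paper does: reduce by transversality to the local model in which $Y$ and $Z$ are cut out by disjoint groups of coordinates, and observe that the ideal of $Z$ is untouched in every chart of ${\rm BL}_Y(X)$, so the total transform is smooth, contains no component of the exceptional divisor, and hence equals the proper transform (the paper phrases this via the explicit incidence-variety description of ${\rm BL}_Y(\C^N)$, but it is the same computation, and like the paper's it yields the scheme-theoretic equality of ideals that you later need). For part (2) you take a genuinely different route: the paper simply continues the coordinate computation, writing out ${\rm BL}_{\widetilde Z}({\rm BL}_Y(X))$ as an incidence variety in $\C^N \times \P^{N-k-1} \times \P^{l-2}$ and noting that this description is manifestly symmetric in the two centers, whereas you deduce (2) formally from (1) using the universal property of blow ups (as stated before Corollary \ref{COR:BLOW_UP_PULL_BACK}): since $p^{-1}(Z)$ is the exceptional divisor of the second blow up and $p^{-1}(Y)$ is the pullback of the first exceptional divisor, both are Cartier, so one gets morphisms $W \to {\rm BL}_Z(X) \to$ nothing less than $W'$, and by symmetry and the uniqueness clauses (first identifying the two lifts to ${\rm BL}_Y(X)$, then the two self-maps of $W$ over it) the two compositions are identities. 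Your argument is slightly longer to set up but buys something the paper's does not make explicit: a canonical isomorphism compatible with the projections to $X$, obtained without writing down the double blow up, and it isolates (1) as the only geometric input. The paper's computation is more elementary and self-contained but is tied to the linear local model. Both are complete; the only points to keep an eye on in your version are that (1) must be used scheme-theoretically (your local argument does give this) and that the ``uniqueness forces the compositions to be the identity'' step really consists of two successive applications of uniqueness (over $X$, then over ${\rm BL}_Y(X)$), which is routine.
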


\begin{proof}  Since the blow up along a submanifold is a local construction,  it suffices to check this statement
when $X = \C^N$,  $Y = {\rm span}(e_1,\ldots,e_k)$, and $Z = {\rm span}(e_l,\ldots,e_N)$, where $e_1,\ldots,e_N$ are the standard basis vectors in $\C^N$.  Since $Y$ and $Z$ are assumed transverse, $l \leq k+1$.
We have
\begin{eqnarray*}
{\rm BL}_Y(X) =  \{(x_1,\ldots,x_N) \times [m_{k+1}:\cdots:m_N] \in \C^N \times \P^{N-k-1} \, |\, \\
 (m_{k+1},\ldots,m_N) \sim  (x_{k+1},\ldots,x_N)\},
\end{eqnarray*}
where ${\bf u} \sim {\bf v}$ means that one vector is a scalar multiple of the other.
Notice that 
\begin{eqnarray*}
\widetilde{Z} = \{(0,\ldots,0,x_{l},\ldots,x_N) \times [m_{k+1}:\cdots:m_N]  \in \C^N \times \P^{N-k-1} \, | \\ \qquad \qquad (m_{k+1},\ldots,m_N) \sim  (x_{k+1},\ldots,x_N)\},
\end{eqnarray*}
which coincides with $\pi^{-1}(Z)$, proving (1).

If we blow up $\widetilde{Z}$, we find
\begin{align*}
{\rm BL}_{\widetilde Z}({\rm BL}_Y(X)) = \hspace{3.5in}\\
\{(x_1,\ldots,x_N) \times [m_{k+1}:\cdots:m_N] \times [n_1:\cdots:n_{l-1}]  \in \C^N \times \P^{N-k-1} \times \P^{l-2} \, | \\ \, (m_{k+1},\ldots,m_N) \sim (x_{k+1},\ldots,x_N) \mbox{ and } (n_1,\ldots,n_{l-1}) \sim (x_1,\ldots,x_{l-1})\}
\end{align*}
This is clearly isomorphic to the result we would obtain if we had first blown up $Z$ and then blown up $\widetilde{Y}$, proving (2).
\end{proof}

We will break the proof of Proposition \ref{goodresolution} into Lemmas \ref{existenceofY} and \ref{finitefibers}, below.
In order to keep notation as simple as possible, we will usually drop the dimension  $N$ from the notation, writing $X \equiv X^N$ and $Y \equiv Y^N$.

In order to construct $Y$, we first recall the construction of $X$. 
Recall 
\begin{eqnarray*}
A_0 = \{[1:0:\cdots:0],[0:1:0:\cdots:0],\ldots,[0:\cdots:0:1],[1:1:\cdots:1]\} \subseteq \P^{N},
\end{eqnarray*}
and that for any $1 \leq i \leq N-2$,  $A_i$ is the set of all $\binom{N+2}{i+1}$ linear subspaces of dimension $i$ spanned by $i+1$ distinct points
from $A_0$.  
The space $X$ was constructed as an iterated blow up 
\begin{eqnarray*}
X := X_{N-1} \xrightarrow{\alpha_{N-2}} X_{N-2} \xrightarrow{\alpha_{N-3}} \cdots \xrightarrow{\alpha_{2}} X_2 \xrightarrow{\alpha_1} X_1 \xrightarrow{\alpha_0} X_0,
\end{eqnarray*}
where $X_0 = \P^{N}$, and for each $0 \leq i \leq N-2$ we have that $\alpha_i: X_{i+1} \rightarrow X_i$ is the blow up of $X_i$ along
the proper transform $\widetilde{A_i}$ of $A_i$ under $\alpha_{0} \circ \cdots \circ \alpha_{i-1}$.

The following lemma helps keep track of intersections between centers of the blow ups.  It is a restatement of \cite[Lemma 3.2.3]{LLOYD_PHILLIPPS}.

\begin{lemma}\label{LEM:GOODCENTERS1} Let $L$ and $M$ be irreducible components of $A_i$ and $A_j$.  Then either: 
\begin{itemize}
\item[(1)] $L \cap M = \emptyset$, 
\item[(2)] $L \cap M$ is an irreducible component of $A_k$ for some $k \leq {\rm min}(i,j)$, or
\item[(3)] $L$ intersects $M$ transversally.
\end{itemize}
\end{lemma}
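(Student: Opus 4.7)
The plan is to exploit the general-position property of $A_0$: any $N+1$ of its $N+2$ points are linearly independent as vectors in $\C^{N+1}$, so every $S\subseteq A_0$ with $|S|\leq N+1$ spans a projective linear subspace of dimension exactly $|S|-1$. A consequence I will use repeatedly is that $A_0\cap L = S_L$ and $A_0\cap M = S_M$: any extra point of $A_0$ lying in $L$ would, together with the spanning set of $L$, produce $i+2$ points of $A_0$ in general position inside the $i$-dimensional space $L$, which is impossible.

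Writing $L=\langle S_L\rangle$, $M=\langle S_M\rangle$ and setting $m+1 := |S_L\cap S_M|$ (with the convention $m=-1$ if the intersection is empty), the case analysis is driven by $|S_L\cup S_M| = i+j+1-m$ together with the standard dimension identity
\[
\dim L + \dim M = \dim\langle L\cup M\rangle + \dim(L\cap M)
\]
for linear subspaces of $\P^N$. If $|S_L\cup S_M|\leq N+1$, general position gives $\dim\langle L\cup M\rangle = i+j-m$, and hence $\dim(L\cap M)=m$. When $m=-1$ this forces $L\cap M=\emptyset$, which is case (1); when $m\geq 0$, the subspace $\langle S_L\cap S_M\rangle\subseteq L\cap M$ already has dimension $m$, so the two coincide, and $L\cap M$ is a component of $A_m$ with $m\leq\min(i,j)$, which is case (2). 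If instead $|S_L\cup S_M|=N+2$, then $\langle L\cup M\rangle=\P^N$ and $\dim(L\cap M)=i+j-N$; for linear subspaces of $\P^N$ this equality with the expected dimension is equivalent to transversality at every intersection point, placing us in case (3).

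The one delicate point --- and the main obstacle --- is verifying that in this last subcase we really are in case (3) rather than case (2). When $m\geq 0$ and $|S_L\cup S_M|=N+2$, one computes $\dim(L\cap M)=i+j-N = m+1 > m$, so $L\cap M$ strictly contains $\langle S_L\cap S_M\rangle$. To conclude that $L\cap M$ is genuinely not a component of any $A_k$, I would argue that any such component $\langle S\rangle\subseteq L\cap M$ would have $S\subseteq L\cap A_0 = S_L$ and $S\subseteq M\cap A_0 = S_M$, whence $S\subseteq S_L\cap S_M$ and $\langle S\rangle\subseteq\langle S_L\cap S_M\rangle$, forcing $\dim\langle S\rangle\leq m$ and contradicting $\dim(L\cap M)>m$. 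This rules out (1) and (2) and confirms (3).
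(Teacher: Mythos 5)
Your argument is correct, but note that the paper does not actually prove Lemma \ref{LEM:GOODCENTERS1}: it is quoted as a restatement of Lemma 3.2.3 of Lloyd-Philipps's thesis \cite{LLOYD_PHILLIPPS}, so there is no in-text proof to compare against. What you supply is a self-contained, purely linear-algebraic verification resting on the general-position property of $A_0$ (any $N+1$ of its $N+2$ points are linearly independent), the consequent identity $A_0\cap L=S_L$, and the dimension formula $\dim L+\dim M=\dim\langle L\cup M\rangle+\dim(L\cap M)$; the case split on $|S_L\cup S_M|\leq N+1$ versus $|S_L\cup S_M|=N+2$ is exhaustive and each branch is handled correctly, including the observation that $\langle L\cup M\rangle=\P^N$ forces $T_xL+T_xM=T_x\P^N$ at every $x\in L\cap M$, which is exactly the notion of transversality used in Lemma \ref{LEM:COMMUTING_BLOW_UPS}. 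Two small remarks: since the lemma is a disjunction, your final paragraph ruling out case (2) in the transverse branch is not logically required (though it is a nice sanity check, and it silently omits the sub-case $m=-1$, $i+j=N$, where $L\cap M$ is a single point not in $A_0$ --- transversality already covers it); and it is worth saying explicitly that $i\leq N-2$ guarantees $i+2\leq N+1$, which is what makes the ``no extra point of $A_0$ on $L$'' argument work. With those cosmetic caveats, your proof stands on its own and in fact makes the paper's appeal to \cite{LLOYD_PHILLIPPS} unnecessary for this statement.
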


In particular, if $L$ and $M$ are two distinct centers of the same dimension and (1) or
(2) holds, then by the time we blow them up they will be disjoint.  Otherwise, (3) holds and (since transversality is preserved
under blowing up) the order of blow ups will not matter, by Lemma \ref{LEM:COMMUTING_BLOW_UPS}.

Let $Y_0 = X_0 = \P^N$ and let $s_0: Y_0 \rightarrow X_0$ be the squaring map
$$s_0:~[x_1:\cdots:x_{N+1}]~\mapsto~[x_1^2:\cdots:x_{N+1}^2].$$  For each $0
\leq i \leq N-2$, let $B_i:= s_0^{-1}(A_i)$, and let $C_i:=B_i-A_i$. The space
$Y$ is constructed by the following sequence of blow ups

\begin{eqnarray*}
Y:= Y_{N-1} \xrightarrow{\beta_{N-2}} Y_{N-2} \xrightarrow{\beta_{N-3}} \cdots \xrightarrow{\beta_{2}} Y_2 \xrightarrow{\beta_1} Y_1 \xrightarrow{\beta_0} Y_0
\end{eqnarray*}
where  $\beta_i: Y_{i+1} \rightarrow Y_i$ is the blow up of $Y_i$ along
the proper transform $\widehat{B_i}$ of $B_i$ under $\beta_{0} \circ \cdots \circ \beta_{i-1}$.

Since Properties (1), (2), and (3) of Lemma \ref{LEM:GOODCENTERS1} persist under taking inverse images by $s_0$, we also have the following lemma. 

\begin{lemma}\label{LEM:GOODCENTERS2} Let $L$ and $M$ be irreducible components of $B_i$ and $B_j$.  Then either: 
\begin{itemize}
\item[(1)] $L \cap M = \emptyset$,
\item[(2)] $L \cap M$ is an irreducible component of $B_k$ for some $k \leq {\rm min}(i,j)$, or
\item[(3)] $L$ intersects $M$ transversally.
\end{itemize}
\end{lemma}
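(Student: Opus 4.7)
My strategy is to reduce Lemma~\ref{LEM:GOODCENTERS2} to Lemma~\ref{LEM:GOODCENTERS1} by exploiting that the squaring map $s_0 : Y_0 \to X_0$ is a finite surjective morphism; consequently each irreducible component $L$ of $B_i = s_0^{-1}(A_i)$ projects onto an irreducible component $L' := s_0(L)$ of $A_i$ of the same dimension $i$.

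The first step is a concrete description of the components of $B_i$. Each component $L'$ of $A_i$ is a linear subspace of $\P^N$ defined by a system of equations of the form $z_a = z_b$ (with the convention $z_0 := 0$), corresponding to a partition of $\{0,1,\ldots,N+1\}$ into $i+2$ blocks. Pulling back by $s_0$ converts each equation $z_a = z_b$ to $z_a^2 = z_b^2$, which factors as $\{z_a = z_b\} \cup \{z_a = -z_b\}$, while $z_a = 0$ is preserved. Hence each irreducible component of $B_i$ is itself a linear subspace of $\P^N$, indexed by a partition of $\{0,1,\ldots,N+1\}$ together with a sign function $\epsilon$ on the blocks not containing $0$ (well-defined up to a global flip within each block).

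Given components $L, M$ of $B_i, B_j$, I would then set $L' = s_0(L)$, $M' = s_0(M)$ and apply Lemma~\ref{LEM:GOODCENTERS1} to the images, lifting each of its three cases. If $L' \cap M' = \emptyset$, then $L \cap M \subseteq s_0^{-1}(L' \cap M') = \emptyset$, yielding Case~(1). If $L'$ and $M'$ are transverse, the point is that transversality of linear subspaces is governed by the rank of the combined system of defining linear equations, and this rank is invariant under sign changes of the equations; hence $L$ and $M$ are transverse as well, yielding Case~(3). If $L' \cap M'$ is a component of $A_k$ with $k \leq \min(i,j)$, then $L \cap M$ is cut out by the union of the two signed systems; after normalizing this system (taking transitive closures, and replacing any pair of sign-inconsistent equalities $z_a = z_b$ and $z_a = -z_b$ by the derived equations $z_a = z_b = 0$) one obtains a signed partition in the normal form of the first step. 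Thus $L \cap M$ is either empty (Case~(1)) or an irreducible component of $B_{k'}$ for some $k' = \dim(L \cap M) \leq \min(i,j)$, yielding Case~(2).

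The main obstacle is Case~(2): in general $L \cap M$ is strictly smaller than $s_0^{-1}(L' \cap M')$. For example, with $L' = M' = \{z_1 = z_2\}$ and $L = \{z_1 = z_2\}$, $M = \{z_1 = -z_2\}$, one has $L \cap M = \{z_1 = z_2 = 0\}$, whose dimension is strictly smaller than that of $L' \cap M'$. The essential bookkeeping is to show that the normalization of the combined signed system always lands back in the family described in Step 1, placing $L \cap M$ in some $B_{k'}$ with $k' \leq \min(i,j)$ as required.
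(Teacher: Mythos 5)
Your overall strategy---push $L,M$ forward by the finite squaring map $s_0$, apply Lemma~\ref{LEM:GOODCENTERS1} to the images, and lift each alternative---is exactly the paper's route: the paper's entire justification is the one-sentence remark that properties (1)--(3) persist under taking $s_0$-preimages, and your proposal is an attempt to flesh that remark out. But as written it has a genuine gap at the one place where the content lies, and you flag it yourself: Case (2) is left as ``essential bookkeeping.'' Moreover, the normal form you propose to land in is too coarse to finish that case. Not every signed partition of $\{0,1,\ldots,N+1\}$ comes from a component of some $B_k$: the components of $A_i$ are spans of subsets of $A^0$, hence are either coordinate subspaces $\{z_a = 0 : a \in R\}$ or diagonal subspaces $\{z_a \text{ all equal} : a \in R\}$ (at most one non-singleton block), and the components of $B_i$ are correspondingly the coordinate subspaces and the signed diagonals $\{\epsilon_a z_a \text{ all equal} : a \in R\}$. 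A locus such as $\{z_1 = 0,\ z_2 = \pm z_3\}$ is a perfectly good ``signed partition'' but is not a center, so ``the normalization lands back in the family of Step 1'' does not by itself yield Case (2). What must be shown---and this short case analysis is really the whole lemma---is that when $L\cap M$ is nonempty and the two non-singleton index sets share at least two indices, the merged system collapses to a single block: either the sign patterns agree up to a global flip, giving one signed diagonal on the union (a component of $B_k$, $k\le \min(i,j)$), or they conflict, forcing every coordinate in both index sets to vanish (a coordinate subspace, again a component of some $B_k$ with $k\le\min(i,j)$); the same collapse occurs when a coordinate subspace meets a signed diagonal in a shared index. When the index sets share at most one index the combined equations are independent and one is in the transverse case (3), which is also why two disjoint non-singleton blocks never arise in the non-transverse situation.

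Separately, your justification for lifting Case (3)---``the rank is invariant under sign changes of the equations''---is false as a general principle: the systems cutting out $\{z_1=z_2=z_3\}$ and $\{z_1=z_2=-z_3\}$ each have rank $2$, but their union has rank $3$, while the unsigned combination has rank $2$. The conclusion you want is nevertheless true, for either of two reasons you should supply. Combinatorially: transversality of $L'=s_0(L)$ and $M'=s_0(M)$ forces their non-singleton blocks to share at most one index (a shared pair $c,c'$ would make $z_c-z_{c'}$ a common defining form), and then a single global sign change of coordinates carries the pair $(L,M)$ to $(L',M')$, so transversality transfers. Or, avoiding ranks entirely: since $s_0$ is finite, $L\cap M\subseteq s_0^{-1}(L'\cap M')$ gives ${\rm codim}(L\cap M)\ge {\rm codim}(L'\cap M')={\rm codim}\,L'+{\rm codim}\,M'={\rm codim}\,L+{\rm codim}\,M$, while the reverse inequality holds for any two intersecting linear subspaces; hence equality, i.e.\ transversality, whenever $L\cap M\neq\emptyset$. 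With these two repairs your outline becomes a complete proof; without them it is a plan rather than a proof.
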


\begin{lemma}\label{existenceofY}
There exist maps $\pr: Y \rightarrow X$ and $\widetilde{s}: Y \rightarrow X$ making Diagram (\ref{EQN:DESIRED_DIAGRAM}) commute (where $s \circ \pr$ is defined).
\end{lemma}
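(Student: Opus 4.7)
The plan is to construct $\pr$ and $\widetilde{s}$ simultaneously by induction on the stages of the Kapranov construction, exploiting the universal property of blow ups (Corollary~\ref{COR:BLOW_UP_PULL_BACK}). At stage $0$, put $\pr_0 := \mathrm{id}_{\P^N}: Y_0 \to X_0$ and let $s_0$ be the squaring map. Inductively, assuming holomorphic maps $\pr_i, s_i : Y_i \to X_i$ have been built, the key claim is that each of $\pr_i^{-1}(\widetilde{A_i})$ and $s_i^{-1}(\widetilde{A_i})$ is set-theoretically supported on $\widehat{B_i}$ together with the exceptional divisors introduced at stages $\beta_0,\ldots,\beta_{i-1}$. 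Granting this, the blow up $\beta_i$ of $\widehat{B_i}$ makes both preimages Cartier (exceptional divisors already being Cartier), and Corollary~\ref{COR:BLOW_UP_PULL_BACK} supplies unique holomorphic lifts $\pr_{i+1}, s_{i+1} : Y_{i+1} \to X_{i+1}$ over $\alpha_i$. Setting $\pr := \pr_{N-1}$ and $\widetilde{s} := s_{N-1}$ then produces the desired maps.

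To justify the support claim, note that the squaring map fixes every point of $A_0$ setwise and preserves each linear subspace of $\P^N$ spanned by points of $A_0$, so $A_i \subseteq B_i = s_0^{-1}(A_i)$ for every $i$. This gives $\pr_i^{-1}(\widetilde{A_i}) \subseteq \widehat{B_i} \cup (\text{earlier exceptional divisors})$ immediately. For $s_i$, away from the earlier exceptional divisors the lift coincides with $s_0$, so its preimage of $\widetilde{A_i}$ there is contained in $s_0^{-1}(A_i) = B_i$, hence in $\widehat{B_i}$; the remaining preimages lie in the exceptional divisors by construction. The main obstacle is to verify that $\widehat{B_i}$ is itself a smooth projective submanifold of $Y_i$ at each stage (so that the next blow up produces another projective manifold), which requires control of the intersection patterns of the various components of $B_j$ for $j\le i$. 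This is exactly the content of Lemma~\ref{LEM:GOODCENTERS2}: distinct components either are disjoint, meet along an irreducible component of some earlier $B_k$ already blown up, or meet transversally, the last case being handled by Lemma~\ref{LEM:COMMUTING_BLOW_UPS}, which shows transverse intersections persist and are indifferent to the order of blow ups.

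Finally, commutativity of Diagram~(\ref{EQN:DESIRED_DIAGRAM}) holds on the dense Zariski open subset $U\subseteq Y$ lying over $\P^N \setminus \bigcup_i B_i$, where $\pr$ restricts to the identity and $\widetilde{s}$ restricts to $s_0$, so $\widetilde{s}|_U = s_0 \circ \pr|_U = s\circ \pr|_U$. Since $\widetilde{s}$ is holomorphic on all of $Y$ and $s\circ \pr$ is holomorphic on its domain of definition, agreement on the dense open subset $U$ forces agreement on the full locus where $s \circ \pr$ is defined, completing the construction.
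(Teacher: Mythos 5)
Your overall strategy -- climb the two Kapranov towers in parallel, use Corollary~\ref{COR:BLOW_UP_PULL_BACK} at each stage, and verify commutativity on a dense open set where $\pr$ is the identity and $\widetilde{s}$ is $s_0$ -- is exactly the paper's, and your treatment of the lift $\widetilde{s}$ and of the final commutativity check matches the paper's argument. The gap is in the construction of $\pr$. You assert that $A_i\subseteq B_i$ gives ``$\pr_i^{-1}(\widetilde{A_i})\subseteq \widehat{B_i}\cup(\text{earlier exceptional divisors})$ immediately,'' and that consequently blowing up $\widehat{B_i}$ makes the preimage Cartier because ``exceptional divisors are already Cartier.'' But set-theoretic \emph{containment} in a union of divisors does not make a closed subscheme Cartier: the universal property requires $(\pr_i\circ\beta_i)^{-1}(\widetilde{A_i})$ to be locally principal, and a priori the preimage could meet an earlier exceptional divisor $E_L$ in a positive-codimension subvariety of $E_L$ (for instance, the fiber of $E_L$ over $\widetilde{L}\cap\widetilde{A_i}$ when $L$ is an ``extra'' center meeting $A_i$ properly), which is codimension $\geq 2$ in $Y_i$ and not Cartier.

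Ruling this out is the real content of the paper's proof, not the smoothness of $\widehat{B_i}$ that you single out as the main obstacle. The paper maintains two inductive invariants: (1) the excess preimage $D_l=\overline{\pr_i^{-1}(\widetilde{A}_l)\setminus\widehat{A}_l}$ is Cartier for \emph{every} later center $l\geq i$ (not just $l=i$), and (2) $\widehat{A}_l\not\subseteq D_m$, so the proper transforms survive and the decomposition propagates. The induction step factors $\beta_i$ into the blow up of $\widehat{A}_i$ followed by that of $\widehat{C}_i=\widehat{B_i\setminus A_i}$, and uses Lemma~\ref{LEM:GOODCENTERS1} to show that each component $L$ of $\widehat{C}_i'$ meets each $\widehat{A}_l'$ either not at all, transversally, or with $L\subseteq\widehat{A}_l'$; only the last case contributes to $D_l$, and it contributes the \emph{entire} exceptional divisor $E_L$, which is why $D_l$ stays Cartier. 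Your proposal cites the right supporting lemmas but applies them only to justify that the centers $\widehat{B_i}$ are smooth; you would need to redeploy them to establish the equality (not containment) of $\pr_i^{-1}(\widetilde{A_l})$ with $\widehat{A}_l$ union a genuine union of full exceptional divisors, for all $l\geq i$, before the universal property can be invoked.
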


\begin{proof}
Consider the diagram
\begin{eqnarray}\label{EQN:LADDER1}
\xymatrix{
Y_{N-1} \ar[d]^{\beta_{N-2}}\ar[r]^{\pr_{N-1}} & X_{N-1} \ar[d]^{\alpha_{N-2}} \\
Y_{N-2}\ar[r]^{\pr_{N-2}} & X_{N-2} \\
\vdots & \vdots \\
Y_1 \ar[d]^{\beta_0} \ar[r]^{\pr_1} & X_1 \ar[d]^{\alpha_0} \\
Y_0  \ar[r]^{\pr_0 = {\rm id}} &  X_0
}
\end{eqnarray}
We will use induction to prove that for every $0 \leq i \leq N-1$ there are mappings $\pr_i: Y_i
\rightarrow X_i$ making the diagram commute
with the following two additional properties:
\begin{enumerate}
\item  for any $i \leq l \leq N-2$ we have that
\[
D_l:=\overline{\pr_i^{-1}\left(\widetilde{A}_l \right) \setminus \widehat{A}_l}
\]
is a Cartier subscheme of $Y_i$,
where tilde denotes proper transform under $\alpha_{i-1} \circ \cdots \circ \alpha_0$ and hat denotes proper transform
under $\beta_{i-1} \circ \cdots \circ \beta_0$, and
\item for every $i \leq l, m \leq N-2$ we have $\widehat{A}_l \not \subseteq D_m$.
\end{enumerate}

\begin{remark}
When $N = 2$ and, therefore $i = 0$, $D_l = \emptyset$.  However, $D_l$ is typically nonempty, 
including the case of $N = 3$ and $i = 1$, where
\begin{eqnarray*}
D_1 = E_{[1:-1:1:1]} \cup E_{[1:1:-1:1]} \cup E_{[1:1:1:-1]}.
\end{eqnarray*}
These are the exceptional divisors obtained when blowing up the points of $B_0 \cap A_1$.
In general, $D_l$ can be thought of as the ``extra'' exceptional divisors lying over ${A}_l$ produced in the construction of $Y_i$ that were not producted
in the construction of~$X_i$.
\end{remark}

\vspace{0.1in}
As the base-case of the induction, notice that $\pr_0 = {\rm id}: Y_0
\rightarrow X_0$ trivially satisfies both (1) and (2).

We now suppose that there is a mapping $\pr_i: Y_i \rightarrow X_i$ 
for which Properties (1) and (2) hold.  We'll
use the universal property of blow ups to construct $\pr_{i+1}: Y_{i+1}
\rightarrow X_{i+1}$ for which  Properties (1) and (2)  hold as well.

By Lemmas \ref{LEM:GOODCENTERS2} and \ref{LEM:COMMUTING_BLOW_UPS} we can perform the blow ups of
irreducible components of $\widehat B_{i}$ in any order we like; recall that $C_i:=B_i \setminus A_i$.
Let us first blow up $\widehat A_{i}$ and then $\widehat C_{i}$,
factoring $\beta_{i}$ as a composition $Y_{i+1} \xrightarrow{\mu_{i}} Z_{i+1}
\xrightarrow{\lambda_{i}} Y_i$, where $\lambda_{i}$ is the blow up along
$\widehat{A}_{i}$ and $\mu_{i}$ is the further blow up along along
$\widehat{C}_{i}$.  Let $\eta_{i}: =
\pr_i \circ \lambda_{i}$ and consider the following diagram. 

\begin{eqnarray}
\xymatrix{
Y_{i+1} \ar[d]^{\mu_{i}} \ar@/_2pc/@{->}[dd]_{\beta_{i}} & \\
Z_{i+1} \ar[d]^{\lambda_{i}} \ar[dr]^{\eta_{i}} & X_{i+1} \ar[d]^{\alpha_{i}} \\
Y_{i} \ar[r]^{\pr_i} & X_i
}
\end{eqnarray}
We will use the universal property to construct $q_{i+1}: Z_{i+1} \rightarrow X_{i+1}$ making the diagram commute.  Then, $\pr_{i+1}: = q_{i+1} \circ \mu_{i}$ will be the desired map.
\begin{eqnarray}\label{EQN:INDUCTION_STEP}
\xymatrix{
Y_{i+1} \ar[d]^{\mu_{i}} \ar[dr]^{\pr_{i+1}} \ar@/_2pc/@{->}[dd]_{\beta_{i}} & \\
Z_{i+1} \ar[d]^{\lambda_{i}} \ar[dr]^{\eta_{i}} \ar[r]^{q_{i+1}}& X_{i+1} \ar[d]^{\alpha_{i}} \\
Y_{i} \ar[r]^{\pr_i} & X_i
}
\end{eqnarray}
By the induction hypothesis, $\pr_i^{-1}(\widetilde{A}_i) = \widehat{A}_i \cup D_i$, where $D_i$ is an Cartier subscheme.
By Property (1) of the induction hypothesis
\begin{eqnarray*}
\eta_{i}^{-1}(\widetilde{A}_i) = \lambda_{i}^{-1}(\pr_i^{-1}(\widetilde{A}_i)) = \lambda_{i}^{-1}(\widehat{A}_i \cup D_i) = E_{\widehat{A}_i} \cup \lambda_{i}^{-1}(D_i)
\end{eqnarray*}
is a Cartier subscheme (where $E_{\widehat{A}_i}$ denotes the exceptional divisor).  By the universal property of blow ups, there exists a map  $q_{i+1} : Z_{i+1} \rightarrow X_{i+1}$ making the diagram commute.

We now must check that $\pr_{i+1}:= q_{i+1} \circ \mu_{i}$ satisfies
Properties (1) and (2).  We'll first show that $q_{i+1}$ satisfies the
these properties.  We will continue to use tildes to denote proper transforms living in $X_i$.  When taking
a further proper transform under $\alpha_{i}$, we will append $'$.  Similarly, we will continue to use hats
to denote proper transforms living in $Y_i$ and we'll append  $'$ to denote a further proper transform under
$\lambda_{i}$ and  $''$ to denote a further proper transform under $\mu_{i}$.

Suppose $i+1 \leq l \leq N-2$.  Consider the proper transform of
$\widetilde{A}_l$ under $\alpha_{i}$, which is given by $\widetilde{A}_l'~=~\overline{\alpha_{i}^{-1}(\widetilde{A}_l \setminus \widetilde{A}_{i})}$.
Since $q_{i+1}: Z_{i+1} \rightarrow X_{i+1}$ is continuous and closed, we have 
\begin{eqnarray*}
(q_{i+1})^{-1}\left(\widetilde{A}_l'\right) = (q_{i+1})^{-1}\bigg(\overline{\alpha_{i}^{-1}(\widetilde{A}_l \setminus \widetilde{A}_{i})}\bigg) &=& \overline{(\alpha_{i} \circ q_{i+1})^{-1}(\widetilde{A}_l \setminus \widetilde{A}_{i})} \\
&=&  \overline{\lambda_{i}^{-1} \circ \pr_{i}^{-1} (\widetilde{A}_l \setminus \widetilde{A}_{i})},
\end{eqnarray*}
using commutativity of (\ref{EQN:INDUCTION_STEP}).
By the induction hypothesis, $\pr_{i}^{-1}(\widetilde{A}_l) = \widehat{A}_l \cup D_l$ and $\pr_{i}^{-1}(\widetilde{A}_{i}) = \widehat{A}_{i} \cup D_{i}$ with $D_{i}$ and $D_l$ both Cartier subschemes and $A_{l} \not \subseteq D_{i}$.
We have
\begin{eqnarray*}
\pr_{i}^{-1} (\widetilde{A}_l \setminus \widetilde{A}_{i}) = \left(\widehat{A}_l \cup D_l\right) \setminus \left(\widehat{A}_{i} \cup D_{i}\right) = \left(\widehat{A}_l \setminus (\widehat{A}_{i} \cup D_{i})\right) \bigcup \left((D_l \setminus D_{i}) \setminus \widehat{A}_{i}\right).
\end{eqnarray*}
Since $\widehat{A}_l \not \subseteq D_{i}$, we have that
\begin{eqnarray*}
\overline{\lambda_{i}^{-1}(\widehat{A}_l \setminus (\widehat{A}_{i} \cup D_{i}))} = \widehat{A}_l'.
\end{eqnarray*}
Meanwhile, since $D_l$ and $D_{i}$ are Cartier subschemes of $Y_i$
\begin{eqnarray*}
H_l := \overline{\lambda_{i}^{-1}((D_l \setminus D_{i}) \setminus \widehat{A}_{i})}\subseteq Z_{i+1}
\end{eqnarray*}
is a (potentially empty) Cartier subscheme.
Thus,
\begin{eqnarray*}
(q_{i+1})^{-1}\left(\widetilde{A}_l'\right) = \overline{\lambda_{i+1}^{-1} \circ \pr_{i}^{-1} (\widetilde{A_l} \setminus \widetilde{A}_{i+1})} = \widehat{A}_l' \cup H_l.
\end{eqnarray*}

By the induction hypothesis, we have that for all $i+1 \leq l, m \leq N-2$,
$\widehat{A}_l
\not \subseteq D_m$ so that $\widehat{A}_l \cap D_m$ is a proper subvariety of
$\widehat{A}_l$.  Since $\widehat{A}_{i}$ is of lower dimension than
$\widehat{A}_l$, there is a point $y \in \widehat{A}_l \setminus
(\widehat{A}_{i} \cup D_m)$.  Since $\lambda_{i}$ is surjective, any
element of $\lambda_{i}^{-1}(y)$ gives a point of $\widehat{A}_l'
\setminus H_m$.  Thus, $\widehat{A}_l'
\not \subseteq H_m$.

We will now pull everything back via the total transform under 
$\mu_{i}$ and check that Properties (1) and~(2) hold for $\pr_{i+1}: = q_{i+1} \circ \mu_{i}$.  
Consider any $i+1 \leq l \leq N-2$.
It follows from Lemma \ref{LEM:GOODCENTERS1} that for any irreducible components $L$ of
$\widehat{C}_{i}'$ and $M$ of $\widehat{A}_l'$ we have either $L \cap M = \emptyset$, $L$ and $M$ are transverse, or  $L \subseteq M$.
In the first case, the total transform of $M$ under the blow up of $L$ coincides with the proper transform $M''$.  This also
holds in the second case, by Lemma \ref{LEM:COMMUTING_BLOW_UPS}.  In
the last case, the total transform of $M$ is $M' \cup E_{L}$, where $E_{L}$ is the exceptional divisor over $L$.
Therefore,
\begin{eqnarray*}
\mu_{i+1}^{-1}(\widehat{A}_l') = \widehat{A}_l'' \cup E_l,
\end{eqnarray*}
where $E_l$ is the union of exceptional divisors over the components of $\widehat{C}_{i}'$ lying entirely within $\widehat{A}_l'$.
Meanwhile 
\begin{eqnarray*}
K_l := \mu_{i}^{-1}(H_l)
\end{eqnarray*}
is a Cartier subscheme. Thus, 
\begin{eqnarray*}
\pr_{i+1}^{-1}(\widetilde{A}'_{l}) = \mu_{i}^{-1}(\widehat{A}_l' \cup H_l) = \widehat{A}_l'' \cup E_l  \cup K_l
\end{eqnarray*}
where $E_l \cup K_l$ is a Cartier subscheme.  In particular, Property (1) holds.

To see that Property (2) holds, notice that for any $i+1 \leq l,m \leq N-2$ we have $\widehat{A}_l'' \not \subseteq E_{\widehat{C}_{i}}$
since $\widehat{A}_l$ is of greater dimension than $\widehat{C}_{i}'$.  Taking a point $y \in \widehat{A}_l \setminus (\widehat{C}_{i}' 
\cup H_m)$, we see that $\mu_{i}^{-1}(y)$ is a nonempty subset of $\widehat{A}_l'' \setminus (E_m \cup K_m)$.
Thus, $\widehat{A}_l''  \not \subseteq (E_m \cup K_m)$ establishing that Property (2) holds.

By induction, we conclude that for each $0 \leq i \leq N-1$ there exist mappings $\pr_i: Y_i \rightarrow X_i$ making Diagram \ref{EQN:LADDER1} commute.

\vspace{0.1in}

We'll now construct the map $\widetilde{s} : Y \rightarrow X$.  Let $s_0: \P^N \rightarrow \P^N$ be the squaring map.
Since $B_0 = s_0^{-1}(A_0)$, Corollary \ref{COR:BLOW_UP_PULL_BACK} gives that $s_0 \equiv \widetilde{s}_0 : Y_0 \rightarrow X_0$ lifts to a holomorphic map $\widetilde{s}_1: Y_1 \rightarrow X_1$:
\begin{eqnarray*}
\xymatrix{
Y_1 \ar[r]^{\widetilde{s}_1} \ar[d]^{\beta_0} & X_1 \ar[d]^{\alpha_0} \\
Y_0 \ar[r]^{\widetilde{s}_0} & X_0
}
\end{eqnarray*}\noindent
Notice that $\widehat{B}_1 = (\widetilde{s}_1)^{-1}(\widetilde{A}_1)$, so that we can
again apply Corollary \ref{COR:BLOW_UP_PULL_BACK} to lift $\widetilde{s}_1$ to a
holomorphic map $\widetilde{s}_2: Y_2 \rightarrow X_2$ making the following diagram
commute:

\begin{eqnarray}\label{EQN:PARTIAL_DIAGRAM_FOR_STILDE}
\xymatrix{
Y_2 \ar[d]^{\beta_1} \ar[r]^{\widetilde{s}_2} & X_2 \ar[d]^{\alpha_1} \\
Y_1 \ar[d]^{\beta_0} \ar[r]^{\widetilde{s}_1} & X_1 \ar[d]^{\alpha_0} \\
Y_0 \ar[r]^{\widetilde{s}_0} & X_0
}
\end{eqnarray}

Continuing in this way, we obtain holomorphic map $\widetilde{s}_i: Y_i \rightarrow X_i$ for $1 \leq i \leq N-1$ making the following diagram commute:

\begin{eqnarray}\label{EQN:DIAGRAM_FOR_STILDE}
\xymatrix{
Y_{N-1} \ar[r]^{\widetilde{s}_{N-1}} \ar[d]^{\beta_{N-2}} & X_{N-1} \ar[d]^{\alpha_{N-2}} \\
Y_{k-2} \ar[r]^{\widetilde{s}_{N-2}}  & X_{N-2}  \\
\vdots & \vdots \\
Y_1 \ar[d]^{\beta_0} \ar[r]^{\widetilde{s}_1} & X_1 \ar[d]^{\alpha_0} \\
Y_0 \ar[r]^{\widetilde{s}_0} & X_0
}
\end{eqnarray}
The desired map is $\widetilde{s} \equiv \widetilde{s}_{N-1}: Y_{N-1} \rightarrow X_{N-1}$. 

\vspace{0.1in}
We must now check that Diagram (\ref{EQN:DESIRED_DIAGRAM}) commutes wherever $s \circ \pr$ is defined, i.e.
on $Y \setminus \pr^{-1}(I_s)$.  Since $Y$ is connected, 
it suffices to prove commutativity on
any open subset of $Y \setminus \pr^{-1}(I_s)$.  
Let
\begin{eqnarray}\label{EQN:DEF_AA_BB}
\AA := \alpha_0 \circ \cdots \circ \alpha_{N-2}: X \rightarrow \P^N \qquad \mbox{and} \qquad \BB = \beta_0  \circ \cdots \circ \beta_{N-2}: Y \rightarrow \P^N
\end{eqnarray}  be the compositions of the blow ups
used to construct $X$ and $Y$.
Consider an open subset  $U \subseteq \P^N$
with $U$ disjoint from $\cup_{i=0}^{N-2} B_i$ 
Then, $\BB|_{{\BB}^{-1}(U)} : \BB^{-1}(U) \rightarrow U$ and $\AA|_{\AA^{-1}(U)} : \AA^{-1}(U) \rightarrow U$
serve as local coordinate charts on $Y$ and $X$.  Commutativity of (\ref{EQN:LADDER1}) gives that when $\pr$ is expressed
in these coordinates it becomes the identity. 

Since $V := \widetilde{s}_0(U)$ is disjoint from $\cup_{i=0}^{N-2} A_i$, we have that
$\AA |_{\AA ^{-1}(V)} : \AA^{-1}(V) \rightarrow V$
serves as a local coordinate chart on $X$.  Commutativity of
(\ref{EQN:DIAGRAM_FOR_STILDE}) implies that when expressed in the $\BB |_{\BB^{-1}(U)}$ and $\AA|_{\AA^{-1}(V)}$
coordinates, $\widetilde{s}$ is given by $\widetilde{s}_0 : U \rightarrow V$. 

By definition, when  $s: X \dashrightarrow X$ is expressed in the 
$\BB|_{\BB^{-1}(U)}$ and $\AA|_{\AA^{-1}(V)}$ coordinates, it becomes $\widetilde{s}_0 : U \rightarrow V$.  
Therefore, when expressed in the $\BB|_{\BB^{-1}(U)}$ and $\AA|_{\AA^{-1}(V)}$ coordinates
$s \circ \pr$ is also given by  $\widetilde{s}_0 : U \rightarrow V$.
We conclude that (\ref{EQN:DESIRED_DIAGRAM}) commutes wherever $s \circ \pr$ is defined.
\end{proof}

\begin{lemma}\label{finitefibers}
Let $\widetilde s:Y\to X$ be the map constructed above. For every $x \in X$ the set $\widetilde{s}^{-1}(x)$ is finite.
\end{lemma}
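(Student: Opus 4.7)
The plan is to prove by induction on $0 \le i \le N-1$ that each intermediate lift $\widetilde{s}_i : Y_i \to X_i$ is a finite morphism; the lemma is then the case $i = N-1$. The base case is immediate: $\widetilde{s}_0 = s_0$ is the squaring self-map of $\P^N$, which is a finite morphism of degree $2^N$.

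For the inductive step, fix $x' \in X_{i+1}$. Commutativity of the $i$-th square in (\ref{EQN:DIAGRAM_FOR_STILDE}) gives $\widetilde{s}_i(\beta_i(y')) = \alpha_i(x')$ for every $y' \in \widetilde{s}_{i+1}^{-1}(x')$, so $\beta_i(y')$ lies in the finite set $\widetilde{s}_i^{-1}(\alpha_i(x'))$ by the inductive hypothesis. It thus suffices to show that $\widetilde{s}_{i+1}^{-1}(x') \cap \beta_i^{-1}(y)$ is finite for each such $y$. If $y \notin \widehat{B}_i$, then $\beta_i^{-1}(y) = \{y\}$ and there is nothing to prove. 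When $y \in \widehat{B}_i$, one has $\widetilde{s}_i(y) \in \widetilde{A}_i$, and both $\beta_i^{-1}(y)$ and $\alpha_i^{-1}(\widetilde{s}_i(y))$ are projective spaces of the same dimension $r-1$, where $r$ is the common codimension of $\widehat{B}_i$ in $Y_i$ and of $\widetilde{A}_i$ in $X_i$ (equality of codimensions follows from finiteness of $\widetilde{s}_i$). The restriction $\phi := \widetilde{s}_{i+1}|_{\beta_i^{-1}(y)}$ maps $\beta_i^{-1}(y)$ into $\alpha_i^{-1}(\widetilde{s}_i(y))$.

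The main obstacle is to show that $\phi$ is a finite morphism of projective spaces. I would verify this by working in local holomorphic coordinates $(u_1,\ldots,u_N)$ on $Y_i$ centered at $y$ with $\widehat{B}_i = \{u_1 = \cdots = u_r = 0\}$, and analogous coordinates $(v_1,\ldots,v_N)$ on $X_i$ centered at $\widetilde{s}_i(y)$ with $\widetilde{A}_i = \{v_1 = \cdots = v_r = 0\}$. Writing $\widetilde{s}_i = (F_1(u),\ldots,F_N(u))$ in these coordinates, the set-theoretic equality $\widehat{B}_i = \widetilde{s}_i^{-1}(\widetilde{A}_i)$ (the hypothesis under which Corollary \ref{COR:BLOW_UP_PULL_BACK} was invoked in Lemma \ref{existenceofY} to produce $\widetilde{s}_{i+1}$) forces $F_1,\ldots,F_r$ to vanish simultaneously only on $\widehat{B}_i$. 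Consequently, the leading homogeneous components of $F_1,\ldots,F_r$ in the normal variables $u_1,\ldots,u_r$ have no common zero in $\P^{r-1}$, and they determine $\phi$ on the exceptional fiber. Thus $\phi$ is a morphism between projective spaces of equal dimension, and any such morphism is automatically finite. Combining the two cases closes the induction.
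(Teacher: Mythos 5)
Your strategy --- inducting on the blow-up stage $i$ rather than on the dimension $N$, and reducing finiteness of $\widetilde{s}_{i+1}$ to finiteness of the induced maps on exceptional fibers --- is genuinely different from the paper's argument, which inducts on $N$ and identifies each fiber $\BB^{-1}(w)$ with a product $Y^{l-1}\times X^{j_1-2}\times\cdots\times X^{j_a-2}$ on which $\widetilde{s}$ acts as $\widetilde{s}^{\,l-1}\times{\rm id}\times\cdots\times{\rm id}$. The reduction itself is sound. But the step you yourself flag as ``the main obstacle'' is exactly where the real work lies, and your justification of it does not hold up.

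First, set-theoretic equality $\widehat{B}_i=\widetilde{s}_i^{-1}(\widetilde{A}_i)$ does \emph{not} force the lowest-degree homogeneous parts of $F_1,\ldots,F_r$ to be free of common zeros, nor does it guarantee that those parts compute $\phi$. Consider $g(u_1,u_2)=(u_1,\,u_1+u_2^2)$, a finite map of degree $2$, with target center $\{v_1=v_2=0\}$: the set-theoretic preimage of the center is the reduced origin, yet the leading parts of the two components are $u_1$ and $u_1$, with common zero $[0:1]\in\P^1$; in fact the lift to the \emph{reduced} blow-up of the source does not even exist, because the scheme-theoretic preimage is $V(u_1,u_2^2)$. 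What makes the universal property applicable here is an ideal-theoretic fact about the squaring map specifically, and what makes $\phi$ finite is an explicit computation: the leading parts of $F_1,\ldots,F_r$ can have different degrees, in which case the coordinates of $\phi$ coming from the higher-order $F_j$ vanish identically on the fiber, and ``no common zero of the leading forms'' is a statement about the wrong collection of forms. Second, even once $\phi$ is known to be a morphism $\P^{r-1}\to\P^{r-1}$, such a morphism is \emph{not} automatically finite: it can be constant, and for $r\ge 2$ constancy is precisely the failure of finiteness you must exclude. Non-constancy has to be verified by computing $\phi$ (the paper finds it is the squaring map on the exceptional $\P^{r-1}$ over ramified centers and the identity over unramified ones). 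Finally, a smaller point: when $y$ lies on two transversally intersecting components of $\widehat{B}_i$, which Lemma \ref{LEM:GOODCENTERS2} permits, $\beta_i^{-1}(y)$ is not a single projective space, so your dichotomy needs an extra branch. The architecture of your induction could be completed, but only by inserting essentially the same local coordinate computations that constitute the body of the paper's proof.
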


\noindent
The proof of this lemma was inspired by techniques of Lloyd-Philipps \cite{LLOYD_PHILLIPPS}.

{\em Proof.} The proof will proceed by induction on the dimension $N$.  
In addition to using superscripts to index the dimension of the spaces $X^N$ and $Y^N$, we'll also 
occasionally append them to our maps in order to specify the dimension of the spaces in the domain and codomain of the maps. For example, the superscript on $\widetilde{s}^N$ indicates that it is a mapping $\widetilde{s}^N: Y^N \rightarrow X^N$and the superscript on $A_l^N$ indicates that it's a subset of $\P^N$.

For the inductive proof, it will be helpful to consider the one-point spaces $\P^0, X^0,$ and $Y^0$ for which it's trivial that $\widetilde{s}^0: Y^0 \rightarrow X^0$ has finite fibers.
When $N =1$ we have  $Y^1 \equiv Y^1_0 \equiv \P^1$ and $X^1 \equiv X^1_0
\equiv \P^1$ and $\widetilde{s}^1 \equiv \widetilde{s}^1_0 : \P^1 \rightarrow
\P^1$ is the squaring map, which clearly has finite fibers.

Now, suppose that for each $1 \leq i < N$, the mappings $\widetilde{s}^i: Y^i
\rightarrow X^i$ have finite fibers in order to prove that 
$\widetilde{s}^N :  Y^N \rightarrow X^N$ has finite fibers.

Recall the commutative diagram:
\begin{eqnarray}\label{EQN:ANOTHER_DIAGRAM_FOR_STILDE}
\xymatrix{
Y \ar[r]^{\widetilde{s}} \ar[d]^{\BB} & X \ar[d]^{\AA} \\
\P^N \ar[r]^{\widetilde{s}_0} & \P^N
}
\end{eqnarray}
where $\AA$ and $\BB$ are the compositions of blow ups defined in
(\ref{EQN:DEF_AA_BB}).  Let $z = \AA(x)$ and notice that since the squaring map
$\widetilde{s}_0$ has finite fibers, there are finitely many points $w \in
\widetilde{s}_0^{-1}(z) \subset \P^N$ over which the preimages $\widetilde{s}^{-1}(x)$ lie.
Thus, for any such $z$ and $w$ it suffices show that
\begin{eqnarray}\label{EQN:RESTRICTED_MAP}
\widetilde{s} | _{\BB^{-1}(w)} : \BB^{-1}(w) \rightarrow \AA^{-1}(z)
\end{eqnarray}
has finite fibers.  

If $\AA(x)$ is not a critical value of $\widetilde{s_0}$, 
then for any $y \in \tilde{s}^{-1}(x)$ there is a neighborhood
$U$ of $\BB(y)$ so that $\widetilde{s}_0: U \rightarrow \widetilde {s}_0(U) =: V$ is a biholomorphism.  Iteratively
applying Corollary \ref{COR:BLOW_UP_PULL_BACK} to $\widetilde{s}_0$ and its inverse gives
that $\widetilde{s} : \BB^{-1}(U) \rightarrow \AA^{-1}(V)$ is a biholomorphism.

If $\AA(x)$ is a critical value, the proof is more subtle.
We will use the recursive structure of $X^N$ and $Y^N$ in order to express these fibers as products 
of lower dimensional $X^i$ and $Y^i$, which will allow us to express
(\ref{EQN:RESTRICTED_MAP}) in terms of $\widetilde{s}^i: Y^i \rightarrow X^i$ and ${\rm id}^i: X^i \rightarrow X^i$, for $0 \leq i < N$, where ${\rm id}^i$ is the identity mapping.

Notice that the construction of $\widetilde{s}: Y \rightarrow X$ commutes
with permutations 
\begin{eqnarray*}
\sigma:[z_1:z_2:\cdots:z_{N+1}] \mapsto [z_{\sigma(1)}:z_{\sigma(2)}:\cdots:z_{\sigma(N+1)}]
\end{eqnarray*}
 of the coordinates on $\P^N$.  In particular, we can suppose without loss of generality that 
$z = \AA(x) = [0:\cdots:0:z_{l+1}:\cdots:z_{N+1}]$ with $z_i \neq 0$ for $l+1 \leq i \leq N+1$ and that the remaining $z_i$ are grouped so that
repeated values come in blocks.  (Note that one can have $l = 0$.)
Commutative Diagram (\ref{EQN:ANOTHER_DIAGRAM_FOR_STILDE}) 
implies that  $w = [0:\cdots:0:w_{l+1}:\cdots:w_{N+1}]$, with $w_i
\neq 0$ for $l+1 \leq i \leq N+1$.

We need a more precise description of centers of the blow ups $A_l^N$.
Let
\begin{eqnarray*}
q_1 = [1:0:\cdots:0],\ldots,q_{N+1} = [0:\cdots:0:1],q_{N+2} = [1:1:\cdots:1] \, \in \P^N
\end{eqnarray*}
and for any $\{i_1,\ldots,i_{m+1}\} \subseteq \{1,\ldots,N+2\}$, let
\begin{eqnarray*}
\Pi_{i_1,\ldots,i_{m+1}} := {\rm span}(q_{i_1},\ldots,q_{i_{m+1}}) \subseteq \P^N.
\end{eqnarray*}
Note that
\begin{eqnarray*}
A_m = \bigcup_{\{i_1,\ldots,i_{m+1}\}} \Pi_{i_1,\ldots,i_{m+1}}
\end{eqnarray*}
where the union is taken over all subsets $\{i_1,\ldots,i_{m+1}\}\subseteq \{1,\ldots,N+2\}$.

\vspace{0.1in}
We will need a more precise description of which components of $z$ are equal. Let's suppose that
\begin{eqnarray*}
z_{l+1} &=& z_{l+2} = \cdots = z_{l+j_1}, \\
z_{l+j_1+1} &=& z_{l+j_1+2} = \cdots = z_{l+j_1+j_2}, \\
&\vdots&  \\
z_{l+j_1+\cdots + j_{a-1}+1} &=& z_{l+j_1+\cdots + j_{a-1}+2} = \cdots = z_{l+j_1+\cdots+j_a},
\end{eqnarray*}
with no equality between any pair of lines.
In other words, the first  $j_1$ nonzero entries are equal, the next $j_2$
nonzero entries are equal and distinct from the first $j_1$ nonzero entries,
etc...  We assume that each $j_1,\ldots,j_a \geq 2$ and that all values
appearing in the remaining components of $z$ occur only once. 

We'll show that 
\begin{eqnarray}\label{EQN:ZFIBER}
\AA^{-1}(z) \cong X^{l-1} \times X^{j_1-2} \times \cdots \times X^{j_a-2},
\end{eqnarray} 
\begin{eqnarray}\label{EQN:WFIBER}
\BB^{-1}(w) \cong Y^{l-1} \times X^{j_1-2} \times \cdots \times X^{j_a-2},
\end{eqnarray}
and that (in the coordinates given by these isomorphisms)
\begin{eqnarray}\label{EQN:RESTRICTION}
\widetilde{s} | _{\BB^{-1}(w)} = \widetilde{s}^{\, l-1} \times {\rm id}^{j_1-2} \times \cdots {\rm id}^{j_a-2},
\end{eqnarray}
where ${\rm id}^i: X^i \rightarrow X^i$ denotes the identity mapping.
By the induction hypothesis, (\ref{EQN:RESTRICTION}) will imply that $\widetilde{s} | _{\BB^{-1}(w)} : {\BB^{-1}(w)} \rightarrow \AA^{-1}(z)$
has finite fibers and thus complete the proof.

We'll first check that (\ref{EQN:ZFIBER}) holds.  Let $V \subseteq \P^N$ be a
neighborhood of $z$ chosen small enough so that it intersects
$\Pi_{i_1,\ldots,i_{m+1}}$ if and only if $z \in \Pi_{i_1,\ldots,i_{m+1}}$.  In order to study $\AA^{-1}(z)$ we'll work with $\AA^{-1}(V)$.

Associated to the particular points $z\in\P^N$ above, we have the following sets. Let $S = \{1,\ldots,N+2\}$, and let 
\begin{eqnarray*}
S^0 &:=& S \setminus \{1,\ldots,l,N+2\}, \\
S^1 &:=& S \setminus \{l+1,\ldots,l+j_1\}, \\
S^2 &:=& S \setminus \{l+j_1+1,\ldots,l+j_1+j_2\}, \\
    & \vdots & \\
S^a &:=& S \setminus \{l+j_1+\cdots+j_{a-1},\ldots,l+j_1+\cdots+j_a\}.
\end{eqnarray*}
Note that $z \in \Pi_{i_1,\ldots,i_{m+1}}$ if and only if $S^b \subseteq
\{i_1,\ldots,i_{m+1}\}$ for some $0 \leq b \leq a$.  We will call each of the
centers $\Pi_{S^b}$ for $0 \leq b \leq a$ {\em primitive center} since any
center of blow up through $z$ will contain at least one of them.
Since $S_b \cup S_c = S$ for any $b \neq c$, any center through $z$ that is blown-up contains a unique primitive center.
Thus, any further center that is blown up is of the form
\begin{eqnarray*}
\Pi_T\quad\text{where}\quad T=S^b\cup \{i_0,\ldots,i_m\}.
\end{eqnarray*}
We will call $\Pi_T$ {\em subordinate} to $\Pi_{S^b}$.

Since $S_b \cup S_c = S$ for $b \neq c$, it also follows that any center subordinate to $\Pi_{S^b}$ is transverse to any center subordinate to
$\Pi_{S^c}$.  Since blow ups preserve transversality, this will also hold for the proper transforms.
Therefore, 
by Lemma \ref{LEM:COMMUTING_BLOW_UPS}, we can exchange the order of blow up between two centers subordinate to
distinct primitive centers and still get the same result for $\AA^{-1}(V)$.
In particular, we can first blow up each of the primitive centers.  After doing
so, we can blow up all of the (proper transforms of) centers subordinate to
$\Pi_{S^0}$, by order of increasing dimension.  We can then blow up all (proper transforms of) centers
subordinate to $\Pi_{S^1}$ by order of increasing dimension, etc... 

Let $[v_1:v_2:\cdots:v_{N+1}]$ be homogeneous coordinates on $\P^N$.  Blowing up $\Pi_{S^0}$ produces
{\footnotesize
\begin{eqnarray*}
\{[v_1:v_2:\cdots:v_{N+1}] \times [m^0_1:\cdots:m^0_l] \in V \times \P^l \, : \, (v_1,\ldots,v_l) \sim (m^0_1,\ldots,m^0_l)\},
\end{eqnarray*}
}
\noindent
where $\sim$ indicates that one vector is a multiple of another.

Blowing-up each of the remaining primitive centers $\Pi_{S^1},\ldots,\Pi_{S^a}$ produces the subset of $V \times \P^{l-1} \times \P^{j_1-2} \times \cdots \times \P^{j_a-2}$ given in the coordinates
\begin{eqnarray*}
\{[v_1:\cdots:v_{N+1}] \times [m^0_1:\cdots:m^0_l] \times [m^1_1:\cdots:m^1_{j_1-1}] \times \cdots \times  [m^a_1:\cdots:m^a_{j_a-1}] \end{eqnarray*}
by the conditions
{\footnotesize
\begin{eqnarray*}
(m^0_1,\ldots,m^0_l) &\sim& (v_1,\ldots,v_l), \\
(m^1_1,\ldots,m^1_{j_1-1}) &\sim& (v_{l+2}-v_{l+1},v_{l+3}-v_{l+1},\ldots,v_{l+j_1}-v_{l+1}) \\
&\vdots& \hspace{1in} \\
(m^a_1,\ldots,m^a_{j_a-1}) &\sim& (v_{l+j_1+\cdots+j_{a-1}+2}-v_{l+j_1+\cdots+j_{a-1}+1},\ldots,v_{l+j_1+\cdots+j_a}-v_{l+j_1+\cdots+j_{a-1}+1}).
\end{eqnarray*} 
}

Let us denote this blow up at all of the primitive centers by $\nu: V^\# \rightarrow V$.
The fiber over $z$ is $\nu^{-1}(z) \cong \P^{l-1} \times \P^{j_1-2} \times \cdots \times \P^{j_a-2}$.  
We'll now check that blow ups along the proper transforms of the centers subordinate to $\Pi_{S^0},\ldots,\Pi_{S^a}$ result in suitable blow ups
of $\nu^{-1}(z)$ in order to transform it into $X^{l-1} \times X^{j_1-2}\times \cdots X^{j_a-2}$.

Each of the centers subordinate to $\Pi_{S^0}$ will be of the form $\Pi_T$ where $T=S^0\cup \{i_1,\ldots,i_m\}$, for $\{i_1,\ldots,i_m\} \subseteq \{1,\ldots,l,N+2\}$.
There are precisely $l+1$ centers of one dimension greater than the dimension of $\Pi_{S^0}$; they are 
\[
\Pi_{S^0 \cup\{1\}},\ldots,\Pi_{S^0\cup \{l\}},\Pi_{S^0\cup \{N+2\}}.
\]
One can check that the proper transforms
of these intersect $\P^{l-1} \times \P^{j_1-2} \times \cdots \times \P^{j_a-2}$ at
\begin{eqnarray}\label{EQN:INTERSECTIONS_FIRST_FACTOR}
&& \{[1:0:\cdots:0]\} \times \P^{j_1-2} \times \cdots \times \P^{j_a-2},\\
&& \hspace{1in} \vdots  \nonumber \\
&&\{[0:0:\cdots:1]\} \times \P^{j_1-2} \times \cdots \times \P^{j_a-2}, \nonumber \mbox{and} \\
&&\{[1:1:\cdots:1]\} \times \P^{j_1-2} \times \cdots \times \P^{j_a-2}, \nonumber
\end{eqnarray}
respectively.  In other words, the centers of dimension one greater than $\Pi_{S^0}$ that are subordinate to $\Pi_{S^0}$ 
intersect $\nu^{-1}(z)$ in $A_0^{l-1} \times \P^{j_1-2} \times \cdots \times \P^{j_a-2}$.

If we let 
\begin{eqnarray*}\hat q_1 = [1:0:\cdots:0],\ldots,\hat q_{l} = [0:0:\cdots:1], \hat q_{l+1} = [1:1:\cdots:1] \in \P^{l-1},
\end{eqnarray*}
then for any $\{i_1,\ldots,i_m\} \subseteq \{1,\ldots,l,N+2\}$, one can check that the proper transform of 
\[
\Pi_{S^0\cup\{i_1,\ldots,i_m\}}
\]
intersects $\nu^{-1}(z)$ in $\hat \Pi_{i_0,\ldots,i_m} \times \P^{j_1-2} \times \cdots \times \P^{j_a-2}$, where
\begin{eqnarray*}
\hat \Pi_{i_1,\ldots,i_m} = {\rm span}\{\hat q_{i_1},\ldots,\hat q_{i_m}\} \subseteq \P^{l-1}.
\end{eqnarray*}
In particular, for any $1 \leq b \leq l-1$, the centers of dimension $b$ greater than the dimension of $\Pi_{S^0}$ that are subordinate to $\Pi_{S^0}$
intersect $\nu^{-1}(z)$ in $A_{b-1}^{l-1} \times \P^{j_1-2} \times \cdots \times \P^{j_a-2}$.

Therefore, blowing up all of the centers subordinate to $\Pi_{S^0}$ in order of
increasing dimension results in a sequential blow up of the first factor $\P^{l-1}$
transforming it into $X^l$.  It leaves each of the remaining factors unchanged.

Matters are almost the same for the remaining factors.  Let us illustrate the only difference by discussing the second factor $\P^{j_1-2}$.
Each of the centers subordinate to $\Pi_{S^1}$ will be of the form 
\[
\Pi_{S^1\cup\{i_1,\ldots,i_m\}}\quad\text{where}\quad \{i_1,\ldots,i_m\} \subseteq \{l+1,\ldots,l+j_1\}.
\]
Thus there are $j_1$ centers of dimension one greater: 
\[
\Pi_{S^1\cup\{l+1\}},\ldots,\Pi_{S^1\cup\{l+j_1\}}.
\]
 One can check that their proper transforms intersect
$\nu^{-1}(z)$ in:
\begin{eqnarray*}
&& \P^{l-1} \times  \{[1:1:\cdots:1]\} \times \P^{j_2-2} \times \cdots \times \P^{j_a-2},\\
&& \P^{l-1} \times \{[1:0:\cdots:0]\} \times \P^{j_2-2} \times \cdots \times \P^{j_a-2}, \mbox{and} \\
&& \hspace{1in} \vdots  \\
&& \P^{l-1} \times \{[0:0:\cdots:1]\} \times \P^{j_2-2} \times \cdots \times \P^{j_a-2},
\end{eqnarray*}
respectively.  Using similar reasoning to that from the analysis of the first factor, we can see that the centers of dimension $b$ greater than $\Pi_{S^1}$ that are subordinate to $\Pi_{S^1}$ will intersect $\nu^{-1}(z)$ in $\P^{l-1} \times A_{b-1}^{j_1-2} \times  \P^{j_1-2} \times \cdots \times \P^{j_a-2}$.  In particular, blowing up all centers subordinate to $\Pi_{S^1}$ in order of dimension will result in blowing up the second factor from $\P^{j_1-2}$ to $X^{j_1-2}$.

We conclude that (\ref{EQN:ZFIBER}) holds.

\vspace{0.1in}
We will now prove (\ref{EQN:WFIBER}) and (\ref{EQN:RESTRICTION}).
Let $U$ be the component of $\widetilde s_0^{-1}(V)$ containing~$w$.   We will study
$\BB^{-1}(U)$ in order to understand $\BB^{-1}(w)$ and
$\widetilde{s}|_{\BB^{-1}(w)}$.

Each of the centers $B_i$ that are blown up in the construction of $Y$ are obtained as preimages of the centers $A_i$
under $\widetilde s_0$.  In particular, the only centers that will be blown up to construct $\BB^{-1}(U)$ are the preimages of the centers subordinate
to the primitive centers $\Pi_{S^0},\ldots,\Pi_{S^a}$. 

Each of the points $q_1,\ldots,q_{N+1}$ is totally invariant under $\widetilde s_0$ so that there are no additional preimages of them.  Meanwhile, $q_{N+2}$ 
has $2^N$ preimages, consisting of all points of the form $[1:\pm 1: \pm 1:\cdots: \pm 1]$.  Each of the centers from $B_i$ is the span of $i+1$ of these $N+1+2^N$ points.

Each primitive center $\Pi_{S^0},\ldots,\Pi_{S^a}$  has a unique
preimage under $\widetilde s_0$ that contains the point $w$ (as can be explicitly verified). Let $\Lambda^0,\ldots,\Lambda^a$ be the unique preimages of the primitive
centers that contain $w$.  Each of the further centers that is blown up will be
subordinate to one of these primitive centers and those subordinate to distinct
primitive centers intersect transversally.  In particular, we can  blow up to form $\BB^{-1}(U)$ in precisely the same order as we did to
form $\AA^{-1}(V)$.


Let us first blow up the primitive centers, replacing $U$ by the subset of $U \times \P^{l-1}
 \times \P^{j_1-2} \times \cdots \times \P^{j_a-2}$ given in the coordinates
\begin{eqnarray*}
\{[u_1:u_2:\cdots:u_{N+1}] \times [n^0_1:\cdots:n^0_l] \times [n^1_1:\cdots:n^1_{j_1-1}] \times \cdots \times  [n^a_1:\cdots:n^a_{j_a-1}]
\end{eqnarray*}
by
{\footnotesize
\begin{eqnarray*}
(n^0_1,\cdots,n^0_l) &\sim& (u_1,\ldots,u_l), \\
(n^1_1,\ldots,n^1_{j_1-1}) &\sim& (u_{l+2} \pm u_{l+1},u_{l+3} \mp u_{l+1},\ldots,u_{l+j_1} \mp u_{l+1})  \\
&\vdots& \hspace{1in} \\
(n^j_1,\ldots,n^j_{j_a-1}) &\sim& (u_{l+j_1+\cdots+j_{a-1}+2} \mp u_{l+j_1+\cdots+j_{a-1}+1},\ldots,u_{l+j_1+\cdots+j_a} \mp u_{l+j_1+\cdots+j_{a-1}+1}) \},
\end{eqnarray*} }
Let us denote the blow up of $U$ along all of the primitive centers $\Lambda^0,\ldots,\Lambda^a$ by $\mu: U^\# \rightarrow U$.
In particular, the fiber over $w$ is $\mu^{-1}(w) \cong \P^{l-1} \times \P^{j_1-2} \times \cdots \times \P^{j_a-2}$.

Notice that $\widetilde s_0: U \rightarrow V$ is given by $$[v_1:\cdots:v_{N+1}] =
\widetilde s_0([u_1:\cdots:u_{N+1}]) = [u_1^2:\cdots:u_{N+1}^2].$$  By Corollary
\ref{COR:BLOW_UP_PULL_BACK}, this lifts to a holomorphic mapping $s^\# :
U^\# \rightarrow V^\#$ whose restriction $s^\# |_{\mu^{-1}(w)}:
\mu^{-1}(w) 
\rightarrow \nu^{-1}(z)$ is given by
{\footnotesize
\begin{align*}
s^\# |_{\mu^{-1}(w)} & \left([n^0_1:\cdots:n^0_l],[n^1_1:\cdots:n^1_{j_1-1}],\ldots,[n^k_1:\cdots:n^j_{j_a-1}]\right)  \\ & =  \left([(n^0_1)^2:\cdots:(n^0_l)^2],[n^1_1:\cdots:n^1_{j_1-1}],\ldots,[n^k_1:\cdots:n^j_{j_a-1}]\right).
\end{align*}
}
In other words, the restriction $s^\# |_{\mu^{-1}(w)}: \mu^{-1}(w)\rightarrow \nu^{-1}(z)$ is the squaring map on the first factor and the identity
on each of the remaining factors.

We now blow up all of the centers that are subordinate to $\Lambda^0$.  They are preimages under $\widetilde s_0$ of the centers subordinate to $\Pi_{S^0}$.
In particular, the places where their proper transforms intersect $\mu^{-1}(w)$ are obtained as
the preimages under $s^\#$ of the places where the centers subordinate to $\Pi_{S^0}$ intersect $\nu^{-1}(z)$.  Thus, for all $0 \leq i \leq l-3$, we have 
\begin{eqnarray*}
B_i^{l-1} \times \P^{j_1-2} \times \cdots \times \P^{j_a-2} = (s^\#)^{-1}(A_i^{l-1} \times \P^{j_1-2}\times \cdots \times \P^{j_a-2}).
\end{eqnarray*}
Blowing these centers up, in order of dimension modifies $\mu^{-1}(w)$ to become
\begin{eqnarray*}
Y^{l-1} \times \P^{j_1-2} \times \cdots \times \P^{j_a-2}
\end{eqnarray*}
and the map $s^\#$ lifts to a holomorphic map 
\begin{eqnarray*}
\widetilde{s}^\# : Y^{l-1} \times \P^{j_1-2} \times \cdots \times \P^{j_a-2} \rightarrow X^{l-1} \times \P^{j_1-2} \times \cdots \times \P^{j_a-2}
\end{eqnarray*}
whose action on the first term in the Cartesian product is $\widetilde{s}^{\,l-1}: Y^{l-1}
\rightarrow X^{l-1}$ (by the uniqueness in Corollary \ref{COR:BLOW_UP_PULL_BACK}).
The action on each of the remaining terms of the product is the identity.

We now blow up the centers that are subordinate to $\Lambda^1$.  They are preimages under $\widetilde s_0$ of the centers subordinate to $\Pi_{S^1}$.
In particular, the places where their proper transforms intersect the fiber over $w$ are obtained as
the preimages under $\widetilde{s}^\#$ of the places where the centers subordinate to $\Pi_{S^1}$ intersect the fiber over $z$.  Thus, for all $0 \leq i \leq j_1-3$, we have 
\begin{eqnarray*}
Y^{l-1} \times A_i^{j_1-2} \times \cdots \times \P^{j_a-2} = (\widetilde{s}^\#)^{-1}(X^{l-1} \times A_i^{j_1-2}  \times \cdots \times \P^{j_a-2}).
\end{eqnarray*}
Blowing these centers up in order of dimension modifies the fiber over $w$ to become
\begin{eqnarray*}
Y^{l-1} \times X^{j_1-2} \times  \P^{j_2-2} \times \cdots \times \P^{j_a-2}
\end{eqnarray*}
and the map $\widetilde{s}^\#$ lifts to a holomorphic map
\begin{eqnarray*}
\widehat{s}^\# : Y^{l-1} \times X^{j_1-2} \times \P^{j_2-2} \times \cdots \times \P^{j_a-2} \rightarrow X^{l-1}\times X^{j_1-2} \times \P^{j_2-2} \times \cdots \times \P^{j_a-2}
\end{eqnarray*}
whose action on first term in the Cartesian product remains as $\widetilde{s}^{\, l-1}: Y^{l-1} \rightarrow X^{l-1}$ and whose action on each of the remaining terms 
is the identity.

Continuing this way through each of the factors in the Cartesian product, we conclude that 
(\ref{EQN:WFIBER}) and (\ref{EQN:RESTRICTION}) hold.
In particular, $\widetilde{s}|_{\BB^{-1}(w)}$ has finite fibers.

\vspace{0.1in}

\noindent
We ultimately conclude that $\widetilde{s} : Y \rightarrow X$ has finite fibers.
\qed(Lemma \ref{finitefibers})

\qed(Proposition \ref{goodresolution})

\section{Computing Dynamical Degrees} \label{SEC:DYN_DEG}

We now focus our attention on computing the dynamical degrees $\lambda_k(f_\rho)$, $1 \leq k \leq N$, for all of the generators $f_\rho$ of the semi-group $\mathcal{F}^N$.
The following three facts simplify our task.
\begin{enumerate}
\item\label{first} Corollary \ref{algstabthm} establishes that for all $n\geq 3$, for all $\rho\in S_n$, the map $f_\rho:X^N\dashrightarrow X^N$ is algebraically stable. As a consequence, 
\[
\lambda_k(f_\rho)=\text{ the spectral radius of }(f_\rho)^\ast:H^{k,k}(X^N;\C)\to H^{k,k}(X^N;\C).
\]
\item\label{second} By Proposition \ref{PROP:FUNCTORIAL}, since $\widetilde s:Y^N\to X^N$ has finite fibers, $f^*=(g\circ s)^*=s^*\circ g^*$ on all $H^{k,k}(X^N;\C)$. 
\item\label{third} Keel's theorem \ref{keelthm} presents the cohomology ring $H^*(X^N;\C)$ as  quotient of the ring generated by all boundary strata by combinatorial relations. \end{enumerate}
Point (\ref{first}) reduces the computation of $\lambda_k(f_\rho)$ to the nondynamical problem of computing 
\[
(f_\rho)^*:H^{k,k}(X^N;\C)\to H^{k,k}(X^N;\C).
\]
 Point (\ref{second}) replaces the computation of $(f_\rho)^*$ by the
computation of \\ $(g_\rho)^*:H^{k,k}(X^N;\C)\to H^{k,k}(X^N;\C)$ and
$s^*:H^{k,k}(X^N;\C)\to H^{k,k}(X^N;\C)$.  This factorization splits the
computation into two natural parts: the combinatorial difficulties arising from
the permutation $\rho\in S_n$ are confined to the automorphism $g_\rho~:~X^N~\to~X^N$, and the difficulties arising from indeterminacy of $f_\rho$ are confined
to a {\em single map} $s:X^N\dashrightarrow X^N$. 

One major complication is that the number of boundary strata, and the dimensions
of the cohomology groups both grow quickly with $N$ as displayed in Table \ref{TABLE_STRATA_DIMS} (above) and Table \ref{TABLE_COH_DIMS} (in Section \ref{SEC:INTRO}).   

\begin{table}
\begin{center}
\begin{tabular}{|c|ccccccc|}
\hline
$N \setminus k$ & 0 & 1 & 2 & 3 & 4 & 5 & 6  \\
0   &      1  &   & & & & & \\
1   &    1   &    1   &    & & & & \\
2   &    1   &    5   &    1   & & & & \\
3   &    1  &    25   &   105 &     1  & & & \\4  &    1  &    56   &   490 &    1260  &    1  & & \\
5   &   1   &   119   & 1918  &   9450  & 17325  &    1  & \\
6   &    1   &    246   &  6825  &  63193 &  197774 &  310677 &  1  \\
\hline
\end{tabular}
\end{center}
\vspace{0.1in}
\caption{\label{TABLE_STRATA_DIMS}Number of strata of codimension $k$ in $X^N$.}
\end{table}

\subsection{Pullback under the automorphism $g_\rho:X^N\to X^N$}
\begin{proposition}\label{PROP:PULL_BACK_AUTO}
For $[D^{S_1} \cap \cdots \cap D^{S_k}]\in H^{k,k}(X^N;\C)$, 
\[
(g_\rho)^*\left([D^{S_1} \cap \cdots \cap  D^{S_k}]\right)=[D^{\rho^{-1}(S_1)}\cap \cdots \cap D^{\rho^{-1}(S_k)}].
\]
\end{proposition}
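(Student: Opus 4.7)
The plan is to reduce the statement to the case $k=1$ and then use that $(g_\rho)^\ast$ is a ring homomorphism. Here is the outline.

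\textbf{Step 1: Identify the action of $g_\rho$ on boundary divisors.} I would first unwind the construction of $g_\rho$ from Propositions \ref{PQprop} and \ref{autos}. The automorphism $g_\rho$ sends the isomorphism class of a marked stable curve $\varphi:P\hookrightarrow C$ to the class of $\varphi\circ\rho^{-1}:P\hookrightarrow C$ (the same underlying curve, but with the point previously labeled $p$ now labeled $\rho(p)$). For a generic point of the boundary divisor $\mathcal{D}^S$, the marked curve $\varphi:P\hookrightarrow C$ has a single node separating $C$ into two components whose sets of marked points correspond to the partition $S\sqcup S^c$. Under the relabeling, the two components now carry the marked points indexed by $\rho(S)\sqcup\rho(S^c)$. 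Consequently
\begin{equation*}
g_\rho(\mathcal{D}^S)=\mathcal{D}^{\rho(S)}, \qquad\text{so}\qquad g_\rho^{-1}(\mathcal{D}^S)=\mathcal{D}^{\rho^{-1}(S)}.
\end{equation*}
Translating this back to $X^N$ via the isomorphism of Theorem \ref{kapranovthm} (and the notation convention in Remark \ref{divnotation}), we obtain $g_\rho^{-1}(D^S)=D^{\rho^{-1}(S)}$. Since $g_\rho$ is a biholomorphism (in particular, regular), $(g_\rho)^\ast[D^S]=[g_\rho^{-1}(D^S)]=[D^{\rho^{-1}(S)}]$.

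\textbf{Step 2: Apply Keel's complete-intersection identity.} By Corollary \ref{completeintersection} we may rewrite the intersection class as a cup product:
\begin{equation*}
[D^{S_1}\cap\cdots\cap D^{S_k}]=[D^{S_1}]\cupprod\cdots\cupprod[D^{S_k}].
\end{equation*}

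\textbf{Step 3: Use that $(g_\rho)^\ast$ is a ring homomorphism.} Because $g_\rho$ is regular, $(g_\rho)^\ast$ is the classical pullback on cohomology and therefore compatible with cup products. Combining this with Steps 1 and 2,
\begin{equation*}
(g_\rho)^\ast[D^{S_1}\cap\cdots\cap D^{S_k}]
=(g_\rho)^\ast[D^{S_1}]\cupprod\cdots\cupprod(g_\rho)^\ast[D^{S_k}]
=[D^{\rho^{-1}(S_1)}]\cupprod\cdots\cupprod[D^{\rho^{-1}(S_k)}],
\end{equation*}
and a second application of Corollary \ref{completeintersection} identifies this with $[D^{\rho^{-1}(S_1)}\cap\cdots\cap D^{\rho^{-1}(S_k)}]$, as required.

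The only genuinely substantive step is Step 1, i.e.\ verifying that the combinatorial relabeling of points by $\rho$ really does permute boundary divisors via $\mathcal{D}^S\mapsto\mathcal{D}^{\rho(S)}$; everything else is formal once this is in hand. A minor bookkeeping issue is the direction of the permutation, which must be consistent with the convention chosen for $g_\rho$ in Proposition \ref{autos}; the conventions in Section \ref{kapsect} (where $z_i=\varphi(p_{\rho^{-1}(i)})$ in the $\rho$-adapted chart) force the formula $g_\rho^{-1}(D^S)=D^{\rho^{-1}(S)}$ and hence the $\rho^{-1}$ on the right-hand side of the proposition. Note also that when $D^{\rho^{-1}(S_1)}\cap\cdots\cap D^{\rho^{-1}(S_k)}=\emptyset$, both sides vanish: the right-hand cup product is zero by relation (3) in Keel's Theorem, and the left-hand side is zero because $g_\rho^{-1}$ is a homeomorphism and therefore preserves non-emptiness of intersections.
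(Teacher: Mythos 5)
Your proposal is correct and follows essentially the same route as the paper's proof: establish $(g_\rho)^*[D^S]=[D^{\rho^{-1}(S)}]$ for a single boundary divisor, convert the stratum class to a cup product via Corollary \ref{completeintersection}, and use that the pullback under the regular map $g_\rho$ respects cup products. Your extra remarks on the sign of the permutation convention and on the empty-intersection case are fine but not needed beyond what the paper records.
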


\begin{proof}
It follows from Proposition \ref{autos} and the fact that $g_\rho$ is unramified that
$(g_\rho)^* (D^S) = D^{\rho^{-1}(S)}$, as divisors.  Thus, on the level of cohomology classes
we have $[g_\rho^* D^S] = [D^{\rho^{-1}( S)}]$.

It then follows easily for the codimension $k$ stratum by Corollary \ref{completeintersection} and taking cup products:
\begin{eqnarray*}
g_\rho^*\left([D^{S_1} \cap \cdots \cap  D^{S_k}]\right) = (g_\rho)^*([D^{S_1}] \cupprod \cdots \cupprod [D^{S_k}])
= g_\rho^*([D^{S_1}]) \cupprod \cdots \cupprod g_\rho^*([D^{S_k}]) \\ = [D^{\rho^{-1}(S_1)}] \cupprod \cdots \cupprod [D^{\rho^{-1}(S_k)}] = [D^{\rho^{-1}(S_1)} \cap \cdots \cap D^{\rho^{-1}(S_k)}].
\end{eqnarray*}
Note that we are using that $g_\rho$ is continuous so that it preserves cup products.
\end{proof}
\medbreak

We will construct an explicit basis ${\bm
B}_k^N$ of $H^{k,k}(X^N;\C)$ consisting of fundamental cohomology classes of certain
codimension $k$ boundary strata.  By Proposition \ref{PROP:PULL_BACK_AUTO},
$g_\rho^*$ induces a permutation on the set of all codimension $k$ boundary
strata and Keel's Theorem \ref{keelthm} can be used to express $g_\rho^*({\bm B}_k^N)$ in terms of ${\bm B}_k^N$.  The stratified structure of $X^N$ coupled with the resulting beautifully simple combinatorics of Keel's Theorem make it possible to directly implement these computations on the computer for all $\rho \in S_n$.  (Our computations were done in Sage \cite{SAGE}.)

\subsection{Pullback action on $H^{1,1}(X^N;\C)$ under the rational map $s:X^N\dashrightarrow X^N$}\label{SEC:PULLVBACK_SQ1}
It will be very helpful for us that $H^{1,1}(X^N;\C)$ is spanned by the
fundamental cohomology classes of the boundary divisors $D^S$.  This follows
immediately from Keel's Theorem \ref{keelthm}.  However, in order to construct
an explicit basis, we recall that for any iterated blow up
$Z$ of projective space, a basis of $H^{1,1}(Z;\C)$ is the fundamental
cohomology class of the proper transform of any hyperplane, together with the
fundamental cohomology classes of each of the exceptional divisors \cite[p. 605]{GH}.

Each of the centers of blow up used in the construction of $X^N$ is (the proper transform of) a linear space of the form
\begin{eqnarray*}
0 = z_{i_1} = \cdots = z_{i_j} \qquad \mbox{or} \qquad z_{i_1} = \cdots = z_{i_j}.
\end{eqnarray*}
In the isomorphism given by Kapranov's Theorem (Theorem \ref{kapranovthm}), the exceptional divisors over these centers correspond to the boundary divisors $D^S$, where
\begin{eqnarray*}
S = \{p_1,p_{i_1 + 2},\ldots,p_{i_j + 2}\}  \qquad \mbox{or} \qquad S =\{p_{i_1 + 2},\ldots,p_{i_j + 2}\},
\end{eqnarray*}
respectively.
Thus, one can take the following as an ordered basis for $H^{1,1}(X^N;\C)$ 
\begin{eqnarray*}
{\bm B}_1^N = \{[D^{S_1}],\ldots,[D^{S_\ell}]\}
\end{eqnarray*}
where $S_1 = \{p_1,p_3\}$ corresponds to the proper transform of the hyperplane $z_1 = 0$ and $S_2,\ldots,S_{\ell}$
are all subsets of $P$ with $2 < |S_i| \leq n-2$ and $p_2 \not \in S_i$.
In particular, $\ell = 2^{n-1} - \binom{n}{2} - 1$.
We order the $S_i$ so that the $2^{n-2}-n+1$ containing $p_1$ are listed before those not containing $p_1$. 

Let us begin by pulling back $[D^S]$ under $\widetilde{s}^*$ for any boundary
divisor $D^S$, independent of whether it appears in ${\bm B}_1^N$.  
If $|S \cap \{p_1,p_2\}| = 1$, then by replacing $S$ with $S^C$, if necessary, we have
$S = \{p_1,p_{i_1},\ldots,p_{i_j}\}$ with $i_q \geq 3$ for $1 \leq q \leq j$. Let $\mathscr{D}^{S}$ denote the divisor in
$Y^N$ obtained as the proper transform of the exceptional divisor obtained by
blowing up (the proper transform of) $0 = z_{i_1} = \cdots = z_{i_j}$.

\begin{lemma}\label{LEM_PULLBACK_RAMIFIED_DIVISOR1}
If $|S \cap \{p_1,p_2\}| = 1$,
then $\widetilde{s}^*\left([D^{S}]\right) = 2 [\mathscr{D}^{S}]$.
\end{lemma}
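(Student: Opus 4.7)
The plan is as follows. Replacing $S$ by $S^c$ if necessary (which does not affect $[D^S]$), I assume $p_1 \in S$, so that $D^S \subset X^N$ is the proper transform of the exceptional divisor over the coordinate linear subspace $L \subset \P^N$ obtained by setting the relevant coordinates equal to zero, and $\mathscr{D}^S \subset Y^N$ is the corresponding divisor in the construction of $Y^N$. Since $\widetilde s: Y^N \to X^N$ is holomorphic, $\widetilde s^*[D^S]$ equals the class of the Cartier divisor $\widetilde s^{-1}(D^S)$, which decomposes as $\sum_V n_V [V]$ over prime divisors $V \subset Y^N$. I will show that the only contributing component is $\mathscr{D}^S$ and that its multiplicity is $2$.

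For the first step, any prime divisor $V$ appearing in $\widetilde s^{-1}(D^S)$ must map dominantly onto $D^S$ because $\widetilde s$ has finite fibers by Lemma \ref{finitefibers}. Using the commutative diagram $\mathcal A \circ \widetilde s = s_0 \circ \mathcal B$ together with the set-theoretic identity $s_0^{-1}(L) = L$ (the defining equations $z_{i_q}^2 = 0$ reduce to $z_{i_q} = 0$), this forces $\mathcal B(V) = L$. But the only prime divisor of $Y^N$ that dominates $L$ under $\mathcal B$ is $\mathscr{D}^S$ itself, since the other prime divisors contained in $\mathcal B^{-1}(L)$ are exceptional divisors over proper subvarieties of $L$ and hence map to proper subsets of $L$.

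To compute the multiplicity, I work locally near a generic point $p \in L$. In an affine chart around $p$, a shift in the coordinates tangential to $L$ (valid because the derivative of squaring at any nonzero value is invertible) brings $s_0$ into the local form $(u_1,\ldots,u_j,x_1,\ldots,x_{N-j}) \mapsto (u_1^2,\ldots,u_j^2,x_1,\ldots,x_{N-j})$ with $L = \{u_1 = \cdots = u_j = 0\}$. By Lemmas \ref{LEM:GOODCENTERS1} and \ref{LEM:GOODCENTERS2}, every other center blown up in the Kapranov construction of $X^N$ and $Y^N$ either contains $L$ (and therefore meets $D^S$ or $\mathscr{D}^S$ only in a proper subvariety of $L$) or is transverse to $L$ or disjoint from it; consequently, near a generic point of $L$ the divisors $D^S$ and $\mathscr{D}^S$ coincide with the exceptional divisors produced by the single blow-up of $L$. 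A standard blow-up coordinate chart in which the target exceptional divisor has local equation $v_1 = 0$ and the lifted squaring map sends $v_1 \mapsto u_1^2$ gives $\widetilde s^{-1}(\{v_1 = 0\}) = \{u_1^2 = 0\}$, yielding multiplicity $2$. The main obstacle is confirming that the generic local picture near $L$ is undisturbed by the intermediate blow-ups in the Kapranov construction; this is exactly what the transversality afforded by Lemmas \ref{LEM:GOODCENTERS1} and \ref{LEM:GOODCENTERS2} ensures.
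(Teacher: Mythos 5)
Your proof is correct and follows essentially the same route as the paper's: identify $\mathscr{D}^{S}$ as the unique divisorial component of $\widetilde{s}^{-1}(D^{S})$ and read off the multiplicity $2$ from the local behavior of the squaring map in blow-up coordinates at generic points. The only differences are cosmetic: you justify the set-theoretic identification explicitly via the finite fibers of $\widetilde{s}$ and the total invariance $s_0^{-1}(L)=L$ where the paper simply reads it off the commutative diagram, and you use a local normal form (straightening the tangential directions) in place of the paper's explicit blow-up charts.
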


\begin{proof}
Notice that the whole construction of $\widetilde{s}: Y^N \rightarrow X^N$ from $s_0: \P^N \rightarrow \P^N$
that is outlined in Diagram (\ref{EQN:DIAGRAM_FOR_STILDE})
commutes with any permutation of the underlying homogeneous coordinates of $\P^N$.
Therefore, without loss of generality, we can suppose that $S = \{1,3,4,\ldots,j+2\}$ with
$D^S$ corresponding to the proper transform of $0 = z_1 = \cdots = z_j$.


We will use the notation $(D^S)$ when we consider $D^S$ as a locally principal divisor with multiplicity. It will be somewhat easier to pull back the divisor $(D^S)$ instead of pulling back the cohomology class $[D^S]$.  This 
will be sufficient for our purposes, because of the following commutative diagram, which is adapted to our setting from \cite[p. 139]{GH}:
\begin{eqnarray}\label{CHERN}
\xymatrix{
H^1(X^N,\mathcal{O}^*) \ar[r]^{\widetilde{s}^*} \ar[d]^c & H^1(Y^N,\mathcal{O}^*) \ar[d]^c \\
H^{1,1}(X^N;\C) \ar[r]^{\widetilde{s}^*} & H^{1,1}(Y^N;\C) 
}
\end{eqnarray}
The cohomology groups in the first row describe the linear equivalence classes of locally principal divisors and the vertical arrows
denote the Chern class.   

Throughout our calculations, we will appeal to $s_0: \P^N \rightarrow \P^N$ which is given by 
\begin{eqnarray*}
[w_1:\cdots:w_{N+1}] = [z_1^2: \cdots : z_{N+1}^2] = s_0([z_1:\cdots:z_{N+1}]).
\end{eqnarray*}

The case $j=1$ is special since $D^S \equiv D^{\{p_1,p_3\}}$ corresponds to the
proper transform of $w_1 = 0$ under all of the blow ups used to construct $X^N$
and $\mathscr{D}^S$ corresponds to the proper transform of $z_1 = 0$ under all
of the blow ups used to construct $Y^N$.  Moreover, it's clear from the commutative diagram
(\ref{EQN:DIAGRAM_FOR_STILDE}) that $\widetilde{s}^{-1}(D^S) = \mathscr{D}^S$.  It remains to keep track of multiplicities.
The affine coordinates
$v_1 = \frac{w_1}{w_{N+1}},\ldots,v_N = \frac{w_N}{w_{N+1}}$ serve as local
coordinates on $X^N$ in a neighborhood of generic points of $D^S$  and the
affine coordinates $u_1 = \frac{z_1}{z_{N+1}},\ldots,u_N = \frac{z_N}{z_{N+1}}$
serve as local coordinates on $Y^N$ in a neighborhood of generic points of
$\mathscr{D}^S$.   (Here, ``generic'' means points which are not on any of the exceptional divisors). Since $(D^S)$ is given locally at generic 
points by $v_1 = 0$ and $\widetilde{s}(u_1,\ldots,u_N) = (u_1^2,\ldots,u_N^2)$ we have that $\widetilde{s}^*((D^S))$
is given locally at generic points by $u_1^2 = 0$.  This gives $\widetilde{s}^*\left((D^{S})\right) = 2 (\mathscr{D}^{S})$ and hence $\widetilde{s}^*\left([D^{S}]\right) = 2 [\mathscr{D}^{S}]$, by Diagram (\ref{CHERN}).

The case $j > 1$ will be similar, except that we need to describe generic
points of $D^S$ and $\mathscr{D}^S$ using blow up coordinates.  Let us again use the affine coordinates $(v_1,\ldots,v_N)$ on $\P^N \setminus \{w_{N+1} = 0\}$ and $(u_1,\ldots,u_N)$ on $\P^N \setminus \{z_{N+1} = 0\}$. At points of the 
proper transform of $0 = v_1 = \ldots = v_j$ not lying on any exceptional divisors
resulting from blow ups of lower dimensional centers, the blow up of this center is given by
\begin{eqnarray*}
\{(v_1,\ldots,v_N)\times [n_1:\cdots:n_{j}] \in \C^N \times \P^{j-1} \, | \, (v_1,\ldots,v_{j}) \sim (n_1,\ldots,n_{j})\}.
\end{eqnarray*}
Local coordinates on $X^N$ in a neighborhood of generic points of $D^S$ are given by $\left(\frac{n_1}{n_j},\ldots, \frac{n_{j-1}}{n_{j}},v_{j},\ldots,v_N\right)$ and in these coordinates $(D^S)$ is given by $v_{j} = 0$.

Generic points of $\mathscr{D}^S$ can be described by the blow up of $0 = u_1 = \cdots = u_j$, which is given by
\begin{eqnarray*}
\{(u_1,\ldots,u_N)\times[m_1:\cdots:m_{j}] \in \C^N \times \P^{j-1} \, | \, (u_1,\ldots,u_{j}) \sim (m_1,\ldots,m_{j})\}.
\end{eqnarray*}
Similarly, local coordinates on $Y^N$ in a neighborhood of generic points of
$\mathscr{D}^S$ are given by $\left(\frac{m_1}{m_j},\ldots, \frac{m_{j-1}}{m_{j}},u_{j},\ldots,u_N\right)$.   In these
systems of local coordinates, we have
{\small
\begin{eqnarray*}
\widetilde{s}  \left(\frac{m_1}{m_j},\ldots, \frac{m_{j-1}}{m_{j}},u_{j},\ldots,u_N\right) 
  = \left(\left(\frac{m_1}{m_{j}}\right)^2,\ldots, \left(\frac{m_{j-1}}{m_{j}}\right)^2,u_{j}^2,\ldots,u_N^2\right) \\
\end{eqnarray*}
}
Therefore, at generic points of $\mathscr{D}^S$, $\widetilde{s}^*((D^S))$ is given by $u_{j}^2 = 0$.  
This gives $\widetilde{s}^*\left((D^{S})\right)~=~2 (\mathscr{D}^{S})$ and hence $\widetilde{s}^*\left([D^{S}]\right) = 2 [\mathscr{D}^{S}]$, by Diagram (\ref{CHERN}).
\end{proof}

If $|S \cap \{p_1,p_2\}| = 0$ or $2$, then replacing $S$ with $S^C$, if necessary, we have
$S = \{p_{i_1},\ldots,p_{i_j}\}$ with $i_q \neq 1,2$ for $1 \leq q \leq
j$.  Let $\mathscr{D}^{S}_{\pm \pm \ldots \pm}$ denote the divisor in
$Y^N$ obtained as proper transform of the exceptional divisor obtained by
blowing up (the proper transform of) 
\begin{eqnarray}\label{PM_EQN}
z_{i_1} = \pm z_{i_2} = \cdots = \pm z_{i_j}.
\end{eqnarray}

\begin{lemma}\label{LEM_PULLBACK_UNRAMIFIED_DIVISOR}
If $|S \cap \{p_1,p_2\}| = 0$ or $2$, then
\begin{eqnarray}\label{EQN:UNRAMIFIED_PULLBACK}
\widetilde{s}^*\left([D^{S}]\right) = \sum [\mathscr{D}^{S}_{\pm \pm \ldots \pm}],
\end{eqnarray}
where the sum is taken over the $2^{j-1}$ possible choices of signs in (\ref{PM_EQN}).
\end{lemma}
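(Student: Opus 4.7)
The plan is to follow the strategy of the proof of Lemma \ref{LEM_PULLBACK_RAMIFIED_DIVISOR1}: use the commutative diagram \eqref{CHERN} to reduce the cohomology-level identity to the computation of the divisorial pullback $\widetilde{s}^{*}((D^S))$, and then read off the local multiplicity of $\widetilde{s}^{*}((D^S))$ along each preimage divisor by working in explicit blow-up coordinates around generic points. As in the previous lemma, the whole construction of $\widetilde{s}:Y^N\to X^N$ commutes with permutations of the homogeneous coordinates on $\P^N$, so after permuting coordinates and, if necessary, replacing $S$ by $S^c$ (which leaves $[D^S]$ unchanged by Keel's Theorem), we may assume that $S=\{p_3,p_4,\ldots,p_{j+2}\}$, and that $D^S$ corresponds to the proper transform of the linear subspace $\{z_1=z_2=\cdots=z_j\}\subseteq\P^N$.

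The key geometric input is that $s_0^{-1}$ of this center splits into $2^{j-1}$ disjoint components:
\[
s_0^{-1}\!\bigl(\{z_1=z_2=\cdots=z_j\}\bigr)=\bigcup_{(\epsilon_2,\ldots,\epsilon_j)\in\{\pm1\}^{j-1}}\{z_1=\epsilon_2 z_2=\cdots=\epsilon_j z_j\}.
\]
Each of these linear subspaces is a component of some $B_i=s_0^{-1}(A_i)$, and hence is a center of blow up in the construction of $Y^N$. The corresponding exceptional divisors, taken under their proper transforms by the later blow ups, are precisely the divisors $\mathscr{D}^{S}_{\pm\pm\cdots\pm}$. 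Therefore $\widetilde{s}^{-1}(D^S)$ is set-theoretically the union of these $2^{j-1}$ divisors, and the only remaining task is to determine the multiplicity of $\widetilde{s}^{*}((D^S))$ along each of them.

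To determine multiplicities, I would fix a choice of signs $(\epsilon_2,\ldots,\epsilon_j)$ and work in the affine chart $\{z_{N+1}\neq 0\}$ near a generic point of the center $\{z_1=\epsilon_2 z_2=\cdots=\epsilon_j z_j\}$, meaning one that does not lie on any of the lower-dimensional preimage centers. At such a point $z_1\neq 0$, so $s_0$ is a local biholomorphism there; this is the essential difference from the ramified case of Lemma \ref{LEM_PULLBACK_RAMIFIED_DIVISOR1}. Introducing blow-up coordinates for $\{z_1=\epsilon_2 z_2=\cdots=\epsilon_j z_j\}$ on the $Y^N$-side, and for $\{z_1=z_2=\cdots=z_j\}$ on the $X^N$-side, in exact analogy with the coordinates $(m_1/m_j,\ldots,m_{j-1}/m_j,u_j,\ldots,u_N)$ and $(n_1/n_j,\ldots,n_{j-1}/n_j,v_j,\ldots,v_N)$ of the previous lemma, the local defining function of $(D^S)$ becomes a single blow-up coordinate whose pullback vanishes to order $1$ along $\mathscr{D}^{S}_{\epsilon_2,\ldots,\epsilon_j}$. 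Summing the $2^{j-1}$ contributions gives $\widetilde{s}^{*}((D^S))=\sum(\mathscr{D}^{S}_{\pm\pm\cdots\pm})$ as a divisor, and applying the Chern class via diagram \eqref{CHERN} yields \eqref{EQN:UNRAMIFIED_PULLBACK}.

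The main obstacle is purely bookkeeping: choosing the right blow-up charts so that the generic local equation of $D^S$ and of each $\mathscr{D}^{S}_{\pm\pm\cdots\pm}$ becomes a single coordinate, and checking that $s_0$ is unramified at generic points of each preimage center (so no factor of $2$ appears, in contrast to the previous lemma). Once that local picture is set up, the multiplicity count is immediate.
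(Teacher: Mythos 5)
Your proposal is correct and follows essentially the same route as the paper's proof: reduce to the divisor-level pullback via diagram (\ref{CHERN}), normalize $S$ by a coordinate permutation (and replacement by $S^c$), identify $\widetilde{s}^{-1}(D^S)$ set-theoretically with the $2^{j-1}$ divisors $\mathscr{D}^{S}_{\pm\cdots\pm}$, and verify multiplicity one in blow-up charts, which the paper does by factoring the pulled-back local equation $u_{j-1}^2-u_j^2=(u_{j-1}-u_j)(u_{j-1}+u_j)$. Two small points to tidy up: for $j=2$ the loci $z_1=\pm z_2$ are hyperplanes, so they are proper transforms rather than blow-up centers with exceptional divisors, and the unramifiedness of $s_0$ at a generic point of each preimage center requires that \emph{all} affine coordinates be nonzero there (which holds generically), not merely $z_1\neq 0$.
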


\begin{proof}
The proof will be quite similar to that of Lemma \ref{LEM_PULLBACK_RAMIFIED_DIVISOR1}.
It will again be 
simpler to pull back the divisor $(D^S)$ rather than the cohomology class $[D^S]$ and we can again
assume, without loss of generality, that $S = \{3,4,\ldots,2+j\}$. 

If $j=2$, $D^{S} \equiv D^{\{p_3,p_4\}}$ is the proper transform of $z_1 = z_2$
under all of the blow ups used to construct $X^N$ from $\P^N$.  Similarly,
$\mathscr{D}^S_\pm$ is the proper transform of $z_1 = \pm z_2$ under all of the
blow ups used to construct $Y^N$.  The local coordinates $v_1,\ldots,v_N$ on
$\P^N \setminus \{w_{N+1} = 0\}$ from Lemma
\ref{LEM_PULLBACK_RAMIFIED_DIVISOR1} serve as local coordinates on $X^N$ in a
neighborhood of generic points of $D^S$.  Meanwhile, the local coordinates
$u_1,\ldots,u_N$ in $\P^N \setminus \{z_{N+1} = 0\}$ serve as local coordinates
on $Y^N$ in a neighborhood of generic points in a neighborhood of
$\mathscr{D}^S_\pm$.  Moreover, it's clear from the commutative
diagram (\ref{EQN:DIAGRAM_FOR_STILDE}) that $\widetilde{s}^{-1}(D^S) =
\mathscr{D}^S_+ \cup \mathscr{D}^S_-$.  Thus, it remains to keep track of
multiplicities.  The divisor $(D^S)$ is locally given at generic points by $v_1
- v_2 = 0$.   At points of $Y^N$ where $u_1,\ldots,u_N$ serve as coordinates
  and at points of $X^N$ where $v_1,\ldots,v_N$ serve as coordinates, the map
$\widetilde{s}$ is given by $\widetilde{s}(u_1,\ldots,u_N) = (u_1^2,\ldots,u_N^2)$.
Since $(D^S)$ is locally given by $v_1-v_2 = 0$, it follows that
$\widetilde{s}^*((D^S))$ is given at generic points by $u_1^2 - u_2^2 =
(u_1-u_2)(u_1+u_2)$.  Since the first factor describes $(\mathscr{D}^S_+)$ and
the second factor describes $(\mathscr{D}^S_-)$, we conclude that
$\widetilde{s}^*((D^S)) = (\mathscr{D}^S_+) + (\mathscr{D}^S_-)$.  By commutative
diagram (\ref{CHERN}), this implies $\widetilde{s}^*([D^S]) = [\mathscr{D}^S_+] +
[\mathscr{D}^S_-]$.

The case $j > 2$ will be similar, except that we will need to use blow up coordinates.  
It's clear from the commutative
diagram (\ref{EQN:DIAGRAM_FOR_STILDE}) that 
\begin{eqnarray*}
\widetilde{s}^{-1}(D^S) = \bigcup \mathscr{D}^S_{\pm\cdots\pm}.
\end{eqnarray*}
Thus, it remains to compute the multiplicity of each contribution.
Let us again use the affine coordinates 
$(v_1,\ldots,v_N)$ on $\P^N \setminus \{w_{N+1} = 0\}$ and $(u_1,\ldots,u_N)$ on $\P^N \setminus \{z_{N+
1} = 0\}$. At points of the
proper transform of $v_1 = \ldots = v_j$ not lying on any exceptional divisors
resulting from blow ups of lower dimensional centers, the blow up of this center is given by
\begin{eqnarray*}
\{(v_1,\ldots,v_N)\times [n_1:\cdots:n_{j-1}] \in \C^N \times \P^{j-2} \, | \, (v_1-v_{j},\ldots,v_{j-1}-v_{j}) \sim (n_1,\ldots ,n_{j-1})\}.
\end{eqnarray*}
Local coordinates on $X^N$ in a neighborhood of generic points of $D^S$ are
given by
\begin{eqnarray*}
\left(\frac{n_1}{n_{j-1}},\ldots,\frac{n_{j-2}}{n_{j-1}},v_{j-1}-v_j,v_j,\ldots,v_N\right).
\end{eqnarray*}
Generic points of $\mathscr{D}^S_{\pm \pm \cdots \pm}$ can be described by the blow up of $z_1 = \pm z_2 = \pm z_j$, which is given by 
\begin{eqnarray*}
\{(u_1,\ldots,u_N)\times [m_1:\cdots:m_{j-1}] \in \C^N \times \P^{j-2} \, | \, (v_1\mp v_j,\ldots,v_{j-1} \mp v_j) \sim (m_1,\ldots ,m_{j-1})\}.
\end{eqnarray*} 
Local coordinates on $Y^N$ in a neighborhood of generic points of $\mathscr{D}^S_{\pm \pm \cdots \pm}$ are
given by
\begin{eqnarray*}
\left(\frac{m_1}{m_{j-1}},\ldots,\frac{m_{j-2}}{m_{j-1}},u_{j-1} \mp u_j,u_j,\ldots,u_N\right).
\end{eqnarray*}
In these local coordinates,
{\small
\begin{eqnarray*}
\widetilde{s} \left(\frac{m_1}{m_{j-1}},\ldots,\frac{m_{j-2}}{m_{j-1}},u_{j-1} \mp u_j,u_j,\ldots,u_N\right) 
= \left(\frac{m_1}{m_{j-1}},\ldots,\frac{m_{j-2}}{m_{j-1}},u_{j-1}^2 \mp u^2_j,u^2_j,\ldots,u^2_N\right).
\end{eqnarray*}
}

Since $(D^S)$ is given by $v_{j-1} - v_j = 0$, in these coordinates $\widetilde{s}^*((D^S))$ is given
by $u_{j-1}^2 \mp u^2_j = (u_{j-1} - u_j)(u_{j-1}+u_j)$.  Since $(\mathscr{D}^S_{\pm \pm \cdots \pm})$
is given locally by exactly one of these two linear factors, we see that for each combination of $\pm$, the preimage
$\mathscr{D}^S_{\pm \pm \cdots \pm}$ is counted with multiplicity one.  Thus, we have
\begin{eqnarray}
\widetilde{s}^*\left((D^{S_i})\right) = \sum (\mathscr{D}^{S_i}_{\pm \pm \ldots \pm}),
\end{eqnarray}
By commutative diagram (\ref{CHERN}) this gives (\ref{EQN:UNRAMIFIED_PULLBACK}).
\end{proof}

\begin{remark}
We will refer to divisors $D^S$ with $|S \cap \{p_1,p_2\}| = 1$ as {\em ramified divisors}
and those with $|S \cap \{p_1,p_2\}| = 0$ or $2$ as {\em unramified divisors}.
\end{remark}

We now return to our basis 
\begin{eqnarray*}{\bm B}_1^N = \{[D^{S_1}],\ldots,[D^{S_\ell}]\}\end{eqnarray*}
where $S_1 = \{p_1,p_3\}$ corresponds to the proper transform of the hyperplane $z_1 = 0$ and $S_2,\ldots,S_{\ell}$
are all subsets of $P$ with $2 < |S_i| \leq n-2$ and $p_2 \not \in S_i$.

\begin{proposition}\label{PROP:H11_PULLBACK}
With respect to the ordered basis ${\bm B}_1^N$
\begin{eqnarray*}
s^*: H^{1,1}(X^N;\C) \rightarrow H^{1,1}(X^N;\C) \quad \mbox{is given by} \quad
s^* = {\rm diag}(2,\ldots,2,1,\ldots,1),
\end{eqnarray*}
where the first $2^{n-2}-n+1$ entries of the diagonal are $2$, corresponding to
the ramified divisors $D^S$ (those with $|S \cap \{p_1,p_2\}| = 1$), and the remaining
entries are $1$, corresponding to the unramified divisors $D^S$ (those with $|S \cap
\{p_1,p_2\}| = 0$).
\end{proposition}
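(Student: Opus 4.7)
The plan is to combine the divisor-level pullback computations of Lemmas \ref{LEM_PULLBACK_RAMIFIED_DIVISOR1} and \ref{LEM_PULLBACK_UNRAMIFIED_DIVISOR} with the factorization $s^{*}=\pr_{*}\circ\widetilde{s}^{*}$ from equation (\ref{EQN:PULLBACK2}).  For each basis element $[D^{S_i}]\in{\bm B}_1^N$, I first apply the appropriate one of the two lemmas to express $\widetilde{s}^{*}[D^{S_i}]$ as a sum of classes of divisors in $Y^N$, then apply Lemma \ref{LEM:PUSH_FORWARD_VARIETIES} term-by-term to push these forward to $X^N$, keeping the contributions whose image under $\pr$ remains a divisor and discarding the rest.

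For the ramified basis elements --- those $S_i$ with $p_1\in S_i$ and $p_2\not\in S_i$, including the distinguished $S_1=\{p_1,p_3\}$ --- Lemma \ref{LEM_PULLBACK_RAMIFIED_DIVISOR1} gives $\widetilde{s}^{*}[D^{S_i}]=2[\mathscr{D}^{S_i}]$.  In each such case the underlying center in $\P^N$ (either the hyperplane $z_1=0$ when $S_i=\{p_1,p_3\}$, or the linear subspace $\{0=z_{i_1-2}=\cdots=z_{i_{j-1}-2}\}$ otherwise) is totally invariant under the squaring map $s_0$, so it appears as a center of the Kapranov construction of $X^N$ as well; the map $\pr$ constructed in Lemma \ref{existenceofY} therefore restricts to a birational map from $\mathscr{D}^{S_i}$ onto $D^{S_i}$, and Lemma \ref{LEM:PUSH_FORWARD_VARIETIES}(i) yields $\pr_{*}[\mathscr{D}^{S_i}]=[D^{S_i}]$.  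Hence $s^{*}[D^{S_i}]=2[D^{S_i}]$, giving the claimed diagonal entry of $2$.

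For the unramified basis elements --- those $S_i$ with $p_1,p_2\not\in S_i$, which by the definition of ${\bm B}_1^N$ forces $|S_i|\geq 3$ --- Lemma \ref{LEM_PULLBACK_UNRAMIFIED_DIVISOR} gives $\widetilde{s}^{*}[D^{S_i}]=\sum_{\epsilon}[\mathscr{D}^{S_i}_{\epsilon}]$ with $\epsilon=(\epsilon_2,\ldots,\epsilon_j)$ ranging over all $2^{|S_i|-1}$ sign choices.  For the trivial choice $\epsilon=(+,\ldots,+)$, the underlying center coincides with the one blown up to create $D^{S_i}$ in $X^N$, so the ramified-case argument gives $\pr_{*}[\mathscr{D}^{S_i}_{+\ldots+}]=[D^{S_i}]$.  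For any other $\epsilon$, the underlying center $L_{\epsilon}=\{z_{i_1-2}=\epsilon_2 z_{i_2-2}=\cdots=\epsilon_j z_{i_j-2}\}$ is not in any $A_k$ --- so it is never blown up in the construction of $X^N$ --- and the image of $\mathscr{D}^{S_i}_{\epsilon}$ under $\pr$ is therefore the proper transform of $L_{\epsilon}$ in $X^N$, a subvariety of codimension $|S_i|-1\geq 2$.  Since $\mathscr{D}^{S_i}_{\epsilon}$ itself has codimension $1$ in $Y^N$, Lemma \ref{LEM:PUSH_FORWARD_VARIETIES}(ii) forces $\pr_{*}[\mathscr{D}^{S_i}_{\epsilon}]=0$.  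Summing over $\epsilon$ gives $s^{*}[D^{S_i}]=[D^{S_i}]$, the claimed diagonal entry of $1$.

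The main point requiring care is the final vanishing step: one must check that for every nontrivial sign choice the image $\pr(\mathscr{D}^{S_i}_{\epsilon})$ really does drop in dimension.  This is exactly where the hypothesis $|S_i|\geq 3$ is used; it forces $L_\epsilon$ to have codimension at least $2$, so its proper transform in $X^N$ is of strictly smaller dimension than a divisor.  The exclusion of all $|S|=2$ subsets from ${\bm B}_1^N$ except the single hyperplane representative $S_1$ is precisely the feature of the basis that makes this argument --- and the diagonal form of the answer --- go through, since for $|S|=2$ the ``mixed-sign'' hyperplanes $L_\epsilon$ are themselves divisors in $X^N$ and would contribute nonzero terms under $\pr_*$.
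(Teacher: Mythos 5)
Your proposal is correct and follows essentially the same route as the paper: factor $s^*=\pr_*\circ\widetilde{s}^*$, apply Lemmas \ref{LEM_PULLBACK_RAMIFIED_DIVISOR1} and \ref{LEM_PULLBACK_UNRAMIFIED_DIVISOR}, and push forward term-by-term via Lemma \ref{LEM:PUSH_FORWARD_VARIETIES}, the key point being that each mixed-sign divisor $\mathscr{D}^{S_i}_{\pm\cdots\pm}$ collapses under $\pr$ to a subvariety of codimension at least two while the all-plus component maps onto $D^{S_i}$ with degree one. The only cosmetic difference is that where you assert outright that $\pr$ restricts birationally onto $D^{S_i}$ and that $\pr(\mathscr{D}^{S_i}_{\epsilon})$ equals the proper transform of $L_{\epsilon}$, the paper deduces the degree-one statement from Zariski's Main Theorem (connected fibers of the birational morphism $\pr$) and only shows $\dim\bigl(\pr(\mathscr{D}^{S_i}_{\pm\cdots\pm})\bigr)<N-1$, which suffices.
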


\begin{proof}
For any $[D^S_i]$ we compute $s^*([D^S_i]) = \pr_*(\widetilde{s}^*([D^S_i]))$.  We
will us Lemmas \ref{LEM_PULLBACK_RAMIFIED_DIVISOR1} and
\ref{LEM_PULLBACK_UNRAMIFIED_DIVISOR} to compute $\widetilde{s}^*([D^S_i])$.  We
will then use Lemma \ref{LEM:PUSH_FORWARD_VARIETIES} to determine the affect of
$\pr_*$ on each of the fundamental classes in $\widetilde{s}^*([D^S_i])$.  

Since $\pr: Y^N \rightarrow X^N$ is a birational morphism, it follows from
Zariski's Main Theorem \cite[Ch. III, Cor. 11.4]{HART} that the fibers of $\pr$ are connected.  In
particular, for any irreducible subvariety $V \subseteq Y^N$ we will either have
$\dim(\pr(V)) < \dim(V)$ or $\deg_{\rm top}(\pr|_V) = 1$.  

First, suppose that $S_i = \{p_1,p_{i_1},\ldots,p_{i_j}\}$ with $i_q \neq 2$
for $1 \leq q \leq j$.  According to Lemma \ref{LEM_PULLBACK_RAMIFIED_DIVISOR1}
we have $\widetilde{s}^*([D^{S_i}]) = 2[\mathscr{D}^{S_i}]$.  The homogeneous
coordinates $[z_1:\cdots:z_{N+1}]$ serve as coordinates on generic points of
$Y^N$ and the homogeneous coordinates $[w_1:\cdots:w_{N+1}]$ serve as
coordinates on generic points of $X^N$.  It follows from commutativity of
(\ref{EQN:LADDER1}) that in these coordinates $\pr([z_1:\cdots:z_{N+1}]) =
[z_1:\cdots:z_{N+1}]$.  Since the proper transform of $0 = z_{i_i+2} =
z_{i_j+2}$ is blown up to construct $Y^N$, corresponding to $\mathscr{D}^{S_i}$,
and the proper transform of $0 = w_{i_i+2} = w_{i_j+2}$ is blown up in the
construction of $X^N$, corresponding to $D^{S_i}$, it follows that $\pr$ maps
$\mathscr{D}^{S_i}$ onto $D^{S_i}$.  Therefore,
\begin{eqnarray*}
s^*([D^{S_i}]) = \pr_*(\widetilde{s}^*([D^{S_i}])) = \pr_*(2[\mathscr{D}^{S_i}]) = 2[D^{S_i}].
\end{eqnarray*}

Now, suppose that $S_i = \{p_{i_1},\ldots,p_{i_j}\}$ with $i_q \neq 1,2$
for $1 \leq q \leq j$.  According to Lemma \ref{LEM_PULLBACK_UNRAMIFIED_DIVISOR}
we have 
\begin{eqnarray*}
\widetilde{s}^*([D^{S_i}]) =  \sum [\mathscr{D}^{S_i}_{\pm\ldots \pm}].
\end{eqnarray*}
As in the previous paragraph, $\pr(\mathscr{D}^{S_i}_{+\ldots+}) = D^{S_i}$
implying that $\pr_*([\mathscr{D}^{S_i}_{+\ldots+}]) = [D^{S_i}]$.  Now, consider the case that
not all of the signs indexing $\mathscr{D}^{S_i}_{\pm\ldots \pm}$
are `$+$'.  
First, notice that since $\mathscr{D}^{S_i}_{\pm\ldots \pm}$ is irreducible, so is $\pr(\mathscr{D}^{S_i}_{\pm\ldots \pm})$.
By commutativity of (\ref{EQN:LADDER1}), we have that $\pr(\mathscr{D}^{S_i}_{\pm\ldots \pm})$
lies within 
\begin{eqnarray*}
\mathcal{A}^{-1}(z_{i_1-2} = \pm z_{i_2-2} = \pm z_{i_j-2})
\end{eqnarray*}
where $\mathcal{A}: X^N \rightarrow \P^N$ is the composition of all of the blow ups used to construct $X^N$.  Moreover, $\pr(\mathscr{D}^{S_i}_{\pm\ldots \pm})$ contains at least one point in each $\mathcal{A}$-fiber over $z_{i_1-2} = \pm z_{i_2-2} = \pm z_{i_j-2}$.  However, since at least one of the $\pm$ is minus, generic points of this linear subspace aren't on any of the centers of blow up.  Thus, 
there are points of $\pr(\mathscr{D}^{S_i}_{\pm\ldots \pm})$ which have a neighborhood in $\pr(\mathscr{D}^{S_i}_{\pm\ldots \pm})$ that is contained within a dimension $< N-1$ analytic set.  Since $\pr(\mathscr{D}^{S_i}_{\pm\ldots \pm})$ is irreducible, this implies that
$\dim(\pr(\mathscr{D}^{S_i}_{\pm\ldots \pm})) < N-1$.
Therefore, if not all of the signs are $+$, we have $\pr_*([\mathscr{D}^{S_i}_{\pm\ldots \pm}])~=~0$.  We conclude that 
\begin{eqnarray*}
s^*([D^{S_i}]) =  \pr_*(\widetilde{s}^*([D^{S_i}])) = \pr_*\left(\sum [\mathscr{D}^{S_i}_{\pm\ldots \pm}]\right) = \pr_*([\mathscr{D}^{S_i}_{+\ldots+}]) = [D^{S_i}].
\end{eqnarray*}
\end{proof}

\subsection{Pullback action on $H^{2,2}(X^3;\C)$ under the rational map
$s:X^3\dashrightarrow X^3$}\label{SEC:PULLVBACK_SQ2}

For any projective manifold $X$ and any dominant rational map $f: X
\dashrightarrow X$, it can be quite subtle to keep track of inverse images of
subvarieties $V \subseteq X$ of codimension at least $2$, since they may lie in
the indeterminacy locus $I_f$.  For this reason, one must compute preimages (set-theoretic and cohomological) using a resolution of singularities as in (\ref{PULLBACK_DIAGRAM}).

This is even more subtle for the map $s: X^N \dashrightarrow X^N$ because $\pr:
Y^N \rightarrow X^N$ is defined implicitly by a universal property.  Because of
these challenges and the computational complexity arising from the dimensions
of the $H^{k,k}(X^N; \C)$ growing exponentially with $N$, we will limit
ourselves in this section to $N = 3$. The case for $N=3$, and $k=1$ has already
been analyzed in Section \ref{SEC:PULLVBACK_SQ1}, so we will focus on $k=2$.
With enough computational power and careful book-keeping about preimages lying
in the indeterminacy locus, we expect these techniques to extend to arbitrary
$N$ and $k$.

One further challenge is that we don't know if the action can be expressed by a diagonal matrix:
\begin{question}
For any $N$ and any $k \geq 2$ does there exist an ordered basis for $H^{k,k}(X;\C)$ consisting of fundamental classes of boundary strata,
in which the action of $s^*$ is expressed by a diagonal matrix?  (Compare to Proposition \ref{PROP_PULLBACK_DIM3}, below.)
\end{question}

The following proposition is stated for general $k$ and $N$.  

\begin{proposition}\label{ai_prop}
Let $Z := D^{S_1} \cap \cdots \cap D^{S_k}\subseteq X^N$ be a codimension $k$ boundary stratum and let
$W_1,\ldots,W_\ell \subseteq X^N$ be the irreducible components of $s^{-1}(Z) := \pr(\widetilde{s}^{-1}(Z))$
that have codimension exactly $k$.

We have
\begin{eqnarray*}
s^*[Z] = \sum_{m=1}^\ell 2^r \, [W_m]
\end{eqnarray*}
where $r$ is the number of the boundary divisors among  $\{D^{S_1},\ldots,D^{S_k}\}$ that are ramified, i.e. those satisfying $|S_i \cap \{p_1,p_2\}| = 1$.
\end{proposition}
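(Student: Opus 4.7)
The approach is to compute $s^*[Z] = \pr_*(\widetilde{s}^*[Z])$ using formula (\ref{EQN:PULLBACK2}), first pulling back through $\widetilde{s}$ and then pushing forward through $\pr$. By Corollary \ref{completeintersection} we can write $[Z] = [D^{S_1}] \cupprod \cdots \cupprod [D^{S_k}]$, and because $\widetilde{s}:Y^N \to X^N$ is a morphism, $\widetilde{s}^*$ is a ring homomorphism on cohomology, so
\begin{eqnarray*}
\widetilde{s}^*[Z] = \widetilde{s}^*[D^{S_1}] \cupprod \cdots \cupprod \widetilde{s}^*[D^{S_k}].
\end{eqnarray*}
Plugging in Lemma~\ref{LEM_PULLBACK_RAMIFIED_DIVISOR1} for each ramified $D^{S_i}$ (contributing a factor $2$ and a single preimage divisor $\mathscr{D}^{S_i}$) and Lemma~\ref{LEM_PULLBACK_UNRAMIFIED_DIVISOR} for each unramified one (contributing a sum over sign choices, each with multiplicity one), and distributing the cup product, yields
\begin{eqnarray*}
\widetilde{s}^*[Z] \;=\; 2^r \sum_{\sigma} [\mathscr{D}^{S_1}_{\sigma_1}] \cupprod \cdots \cupprod [\mathscr{D}^{S_k}_{\sigma_k}],
\end{eqnarray*}
where $\sigma = (\sigma_1,\ldots,\sigma_k)$ ranges over the admissible sign choices (with the ramified indices contributing only the trivial choice).

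The next step is to identify each cup product on the right with the fundamental class of the set-theoretic intersection $V_\sigma := \mathscr{D}^{S_1}_{\sigma_1}\cap \cdots \cap \mathscr{D}^{S_k}_{\sigma_k}$. For this one needs the intersection to be proper (codimension exactly $k$) and transversal. Properness follows from Lemma~\ref{finitefibers}: since $\widetilde{s}$ has finite fibers, $\widetilde{s}^{-1}(Z)$ has codimension $k$, so each nonempty $V_\sigma$ has codimension at most $k$ and hence exactly $k$. Transversality is the analog for $Y^N$ of Lemma~\ref{intersectingdivslemma} on $X^N$: the $\mathscr{D}^{S_i}_{\sigma_i}$ are either proper transforms of coordinate/sign-twisted linear subspaces or exceptional divisors over them in the sequential blow ups constructing $Y^N$, and Lemma~\ref{LEM:GOODCENTERS2} together with Lemma~\ref{LEM:COMMUTING_BLOW_UPS} produces the required normal-crossings behavior. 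Granting this,
\begin{eqnarray*}
\widetilde{s}^*[Z] \;=\; 2^r \sum_{\sigma} [V_\sigma].
\end{eqnarray*}

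The final step is to apply $\pr_*$ and invoke Lemma~\ref{LEM:PUSH_FORWARD_VARIETIES}: $\pr_*[V_\sigma] = \deg_{\rm top}(\pr|_{V_\sigma})\,[\pr(V_\sigma)]$ if $\dim \pr(V_\sigma) = \dim V_\sigma$, and $0$ otherwise. Because $\pr$ is a birational morphism with connected fibers (Zariski's Main Theorem, exactly as used in the proof of Proposition~\ref{PROP:H11_PULLBACK}), whenever the dimension is preserved the top degree equals $1$. Thus only those $V_\sigma$ whose images under $\pr$ are of codimension $k$ contribute, and each contributes $[\pr(V_\sigma)]$. The images of these surviving $V_\sigma$ are precisely the codimension-$k$ components $W_1,\ldots,W_\ell$ of $s^{-1}(Z) = \pr(\widetilde{s}^{-1}(Z))$. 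Moreover, since $\pr$ is birational, the codimension-$k$ component of $\pr^{-1}(W_m)$ is unique, so the correspondence $\sigma \leftrightarrow W_m$ is a bijection and no $W_m$ is counted twice. Combining yields $s^*[Z] = \sum_{m=1}^\ell 2^r [W_m]$, as claimed.

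The main obstacle is the middle step: verifying the transversality/properness of the intersections $V_\sigma$ in $Y^N$ so that the cup products give the intersections as \emph{reduced} fundamental classes, with no intersection-theoretic multiplicities hidden inside the factor $2^r$. In higher codimension one could alternatively package this via the excess-intersection formula, but a direct local analysis — working chart-by-chart in the Kapranov-type blow up coordinates on $Y^N$, just as in the proofs of Lemmas~\ref{LEM_PULLBACK_RAMIFIED_DIVISOR1} and~\ref{LEM_PULLBACK_UNRAMIFIED_DIVISOR} — is the cleanest route and is what I would carry out in the full proof.
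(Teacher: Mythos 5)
Your proposal follows essentially the same route as the paper's proof: write $[Z]$ as a cup product via Corollary \ref{completeintersection}, pull back factor by factor using Lemmas \ref{LEM_PULLBACK_RAMIFIED_DIVISOR1} and \ref{LEM_PULLBACK_UNRAMIFIED_DIVISOR} to get $2^r$ times a sum of classes of transverse codimension-$k$ intersections (properness coming from the finiteness of the fibers of $\widetilde{s}$), then push forward with $\pr_*$ using Lemma \ref{LEM:PUSH_FORWARD_VARIETIES} and Zariski's Main Theorem to kill the dimension-dropping components and assign degree one to the rest. The transversality point you flag as the "main obstacle" is exactly the point the paper disposes of with a one-sentence appeal to the fact that the expanded cup products are products of transversally intersecting divisors, so your outline matches the published argument.
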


Some comments are in order:
\begin{enumerate}
\item Since $\widetilde{s}$ has finite fibers, every irreducible component of $s^{-1}(Z)$
has codimension at least $k$.  We ignore any preimages of codimension greater than $k$,
even those lying entirely in $I_s\subseteq X^N$.
\item By Lemma \ref{intersectingdivslemma}, $Z$ is uniquely represented as an intersection of boundary divisors, so that the number $r$ is well-defined.
\end{enumerate}

\begin{proof}
Recall that $s^*([Z]):= \pr_*(\widetilde{s}^*([Z]))$.
Without loss of generality, we can suppose that the first $r$ divisors are ramified and the remaining
ones are unramified.
From Lemmas \ref{LEM_PULLBACK_RAMIFIED_DIVISOR1} and \ref{LEM_PULLBACK_UNRAMIFIED_DIVISOR} we have
\begin{eqnarray*}
\widetilde{s}^*([Z]) &=& \widetilde{s}^*([D^{S_1}] \cupprod \cdots \cupprod [D^{S_k}]) =
\widetilde{s}^*([D^{S_1}]) \cupprod \cdots \cupprod \widetilde{s}^*([D^{S_k}])  \\
&=& 2[\mathscr{D}^{S_1}] \cupprod \cdots \cupprod 2[\mathscr{D}^{S_r}] \cupprod \left(\sum [\mathscr{D}^{S_{r+1}}_{\pm \pm \ldots \pm}]\right) \cupprod \cdots \cupprod \left(\sum [\mathscr{D}^{S_{k}}_{\pm \pm \ldots \pm}]\right) \\ &=& 2^r \sum_{m=1}^j [V_m].
\end{eqnarray*}
where each $V_m$ is an irreducible  component of $\widetilde{s}^{-1}(D^{S_1}) \cap
\cdots \cap \widetilde{s}^{-1}(D^{S_k})$.  Each of these components has codimension $k$ since
$\widetilde{s}$ has finite fibers.    Notice that each $k$-fold iterated cup product obtained when expanding the sum corresponds to $k$ fundamental classes of divisors intersecting transversally.  This is why
the fundamental cohomology class of each component $V_m$ does not get an extra multiplicity.

According to Lemma \ref{LEM:PUSH_FORWARD_VARIETIES}, any $V_m$ with
$\dim(\pr(V_m)) < \dim(V_m)$ will have $\pr_*([V_m]) = 0$.  Removing any such
$V_m$ from our list (and re-ordering if necessary), we can assume that the first $\ell$ 
components $V_1,\ldots,V_\ell$ are mapped by $\pr$ onto
$W_1,\ldots,W_\ell$ of the same dimension and the remaining components are decreased in dimension by the map $\pr$.  It follows from Zariski's Main Theorem 
\cite[Ch. III, Cor. 11.4]{HART} that $\deg_{\rm top}(\pr|_{V_m}) = 1$ for $1 \leq m \leq \ell$.
In conclusion
\begin{eqnarray*}
s^*([Z]) = \pr_*(\widetilde{s}^*([Z])) = \pr_*\left(2^r \sum_{m=1}^{j} [V_m]\right) = 2^r \sum_{m=1}^\ell [W_m].
\end{eqnarray*}
\end{proof}

We will construct an ordered basis $\bm B_2^3$ for $H^{2,2}(X^3,\mathbb{C})$ using intersections of the boundary divisors indexed by the following subsets of $P = \{p_1,\ldots,p_6\}$:
\begin{eqnarray*}
\begin{array}{cccc}
S_1:=\{p_1,p_3,p_4,p_5\} &  S_6:=\{p_1,p_3,p_4\}   &   S_{11}:=\{p_1,p_5,p_6\}  &   S_{16}:=\{p_1,p_2,p_6\} \\
S_2:=\{p_1,p_3,p_4,p_6\} &  S_7:=\{p_1,p_3,p_5\}   &   S_{12}:=\{p_1,p_2\}      &   S_{17}:=\{p_1,p_4\} \\
S_3:=\{p_1,p_3,p_5,p_6\} &  S_8:=\{p_1,p_3,p_6\}   &   S_{13}:=\{p_1,p_2,p_3\}  &   S_{18}:=\{p_3,p_4\} \\
S_4:=\{p_1,p_4,p_5,p_6\} &  S_9:=\{p_1,p_4,p_5\}   &   S_{14}:=\{p_1,p_2,p_4\}  &   \\
S_5:=\{p_1,p_3\}	 &  S_{10}:=\{p_1,p_4,p_6\} &  S_{15} := \{p_1,p_2,p_5\} &
\end{array}
\end{eqnarray*}

By Keel's theorem \ref{keelthm}, $\mathrm{dim}(H^{2,2}(X^3;\C))=16$.
We will use as many ramified codimension 2 boundary strata as possible in order to make the expression of $s^*$ in $\bm B_2^3$
as close to being diagonal as possible.
Our first $11$ strata are obtained as intersections of two ramified divisors:
\begin{eqnarray*}
\begin{array}{cccc}
Z_1:=D^{S_1} \cap D^{S_5} & Z_4:=D^{S_2}\cap D^{S_5}  & Z_7:=D^{S_2} \cap D^{S_{10}}  & Z_{10}:=D^{S_4}\cap D^{S_{17}} \\
Z_2:=D^{S_1} \cap D^{S_7} & Z_5:=D^{S_2}\cap D^{S_6}  & Z_{8}:=D^{S_3}\cap D^{S_5}    & Z_{11}:=D^{S_5}\cap D^{S_7}  \\
Z_3:=D^{S_1} \cap D^{S_9} & Z_6:=D^{S_2} \cap D^{S_8} & Z_{9}:=D^{S_3} \cap D^{S_{11}} & \\
\end{array}
\end{eqnarray*}
Let $Z = D^{S_i} \cap D^{S_j}$ be any one of these $11$ boundary strata.
Since each is ramified we have $\widetilde{s}^{-1}(D^{S_i}) = \mathscr{D}^{S_i}$
and $\widetilde{s}^{-1}(D^{S_j}) = \mathscr{D}^{S_j}$.   Hence, $\widetilde{s}^{-1}(Z) = \mathscr{D}^{S_i} \cap \mathscr{D}^{S_j}$.   Similarly to the proof of Proposition \ref{PROP:H11_PULLBACK} we have
$\pr(\mathscr{D}^{S_i} \cap \mathscr{D}^{S_j}) = Z$.
Since $D^{S_i}$ and $D^{S_j}$ both are ramified, it follows
from Proposition \ref{ai_prop} that $s^*([Z]) = 2^2[Z]$.   In summary:
\[
s^*([Z_i]) = 2^2 [Z_i], \text{ for all } 1 \leq i \leq 11.
\]

There are four more ramified strata of codimension $2$ we will use for our basis:
\begin{eqnarray*}
 Z_{12}:=D^{S_1}\cap D^{S_{16}}, \,\,
 Z_{13}:=D^{S_2}\cap D^{S_{15}}, \,\,
 Z_{14}:=D^{S_3}\cap D^{S_{14}}, \,\, \mbox{and} \,\,
 Z_{15}:=D^{S_4}\cap D^{S_{13}}.
 \end{eqnarray*}
For each of them, the first term in the intersection is ramified and the second one is unramified.

First consider $Z_{12} = D^{S_1}\cap D^{S_{16}}$.  
Recall that we use the normalization
\begin{align}\label{6PTS_NORMALIZATION}
\varphi(p_1) = 0, \,\, \varphi(p_2) = \infty, \,\,  \varphi(p_3) = z_1, \,\, \varphi(p_4) = z_2, \,\, \varphi(p_5) = z_3, \,\, \mbox{ and } \,\, \varphi(p_6) = z_4.
\end{align} 
With respect to the coordinates obtained from this normalization $$D^{S_1} =
\mathcal{A}^{-1}([0:0:0:1]) \cong X^2$$ and $\mathscr{D}^{S_1} =
\widetilde{s}^{-1}(D^{S_1}) = \mathcal{B}^{-1}([0:0:0:1]) \cong Y^2$.  
As in the proof of Lemma \ref{finitefibers}, under these identifications of the fibers with $X^2$ and $Y^2$
we have that
\begin{eqnarray*}
\widetilde{s}|_{\mathcal{B}^{-1}([0:0:0:1])} : \mathcal{B}^{-1}([0:0:0:1])
\rightarrow \mathcal{A}^{-1}([0:0:0:1])
\end{eqnarray*}
 is $\widetilde{s}^2: Y^2 \rightarrow X^2$
and
\begin{eqnarray*}
\pr|_\mathcal{B}^{-1}([0:0:0:1]) : \mathcal{B}^{-1}([0:0:0:1])
\rightarrow  \mathcal{A}^{-1}([0:0:0:1])
\end{eqnarray*}
 is $\pr^2: Y^2 \rightarrow X^2$.  
(We are using superscripts on the names of the maps to denote
the dimensions of the domain/codomain, as in Lemma \ref{finitefibers}.)  Therefore, computing $s^{-1}(Z_{12}):=\pr(\widetilde{s}^{-1}(Z_{12}))$
amounts to computing $(s^2)^{-1}(W) := \pr^2((\widetilde{s}^2)^{-1}(W))$, where $W$ is the divisor in $X^2$
obtained from intersecting $\mathcal{A}^{-1}([0:0:0:1]) \cong X^2$ with $D^{S_{16}}$.  
Recall that $X^2$ is the blow up of $\P^2$ at $[0:0:1], [0:1:0], [1:0:0]$ and $[1:1:1]$, where in this context $\P^2$ is the exceptional divisor over $[0:0:0:1]$ obtained in the first round of blow ups of $\P^3$ used to construct $X^3$.
Since $D^{S_{16}}$ is
obtained by blowing up the proper transform of $z_1 = z_2 = z_3$, the
intersection $W$ corresponds in
$\mathcal{A}^{-1}([0:0:0:1])$ to the blow up of $\P^2$ at $[1:1:1]$.  Therefore,
the preimages under $\widetilde{s}^2$ will correspond in
$\mathcal{B}^{-1}([0:0:0:1])$ to the blow ups of $\P^2$ at $[1:\pm 1: \pm 1]$.
As in the proof of Proposition \ref{PROP:H11_PULLBACK}, $\pr^2$ will crush each
of these blow ups other than the one at $[1:1:1]$.  Therefore, the only component of $(s^2)^{-1}(W) := \pr^2((\widetilde{s}^2)^{-1}(W))$ of dimension $1$ is the blow up of $\P^2$ at $[1:1:1]$.  Considered in $X^3$,
this is just $Z_{12}$.

Essentially the same proof shows that for $13 \leq i \leq 15$, the only component of $s^{-1}(Z_i)$ having dimension one is $Z_i$.  Proposition \ref{ai_prop} gives that
 \[
 s^*([Z_i])=2[Z_i],\text{ for all }12\leq i \leq 15.
 \]

We require one more basis element, which unfortunately will not pullback to a multiple of itself.  Let 
\begin{eqnarray*}
Z_{16}:=D^{S_{12}} \cap D^{S_{18}}.
\end{eqnarray*}
It can be readily verified from Theorem \ref{keelthm} that the ordered set 
\[
{\bm B}_2^3 = \{[Z_1],\ldots,[Z_{16}]\}
\]
is a basis of $H^{2,2}(X^3;\C)$. 

We'll again need to use coordinates from the blow up description of $X^3$ given
in Section~\ref{kapsect} to compute $s^*([Z_{16}])$. 
   To simplify notation, let's write $Z \equiv Z_{16}$. 
Since neither $D^{S_{12}}$ or $D^{S_{18}}$ is ramified, Proposition \ref{ai_prop} gives that $s^*[Z]$
will be the sum of fundamental classes of the components of $s^{-1}(Z)$ of dimension $1$, each with multiplicity one.

Recall that we use the normalization stated in Line (\ref{6PTS_NORMALIZATION}).
In these coordinates, $Z$ is the intersection of
$E_{[1:1:1:1]}$ with the proper transform of the hyperplane $z_1 = z_2$.
Consider the preimages $[1:\pm 1:\pm 1:\pm 1] \in s_0^{-1}([1:1:1:1])$.
Because these points are not critical, there are neighborhoods $\BB^{-1}(U)$ and $\AA^{-1}(V)$ of  $\mathcal{B}^{-1}([1:\pm 1:\pm 1:\pm 1])$
and $\mathcal{A}^{-1}([1: 1:1:1])$, respectively, so that
\begin{eqnarray*}
\widetilde{s}|_{\mathcal{B}^{-1}(U)}: \mathcal{B}^{-1}(U) \rightarrow \mathcal{A}^{-1}(V)
\end{eqnarray*}
is a biholomorphism (see the proof of Lemma \ref{finitefibers}).  In particular, there are 
eight irreducible preimages $\mathscr{Z}_{\pm,\pm,\pm}$ of $Z$ under $\widetilde{s}$, with one of them in each such neighborhood $\BB^{-1}(U)$.
We must determine which of them have image under the map $\pr$ of dimension $1$.

There is a sufficiently small neighborhood $U$ of $[1:1:1:1]$ so that $\pr$ maps $\mathcal{B}^{-1}(U)$ biholomorphically
onto $\mathcal{A}^{-1}(U)$.  In particular, $\pr(\mathscr{Z}_{+,+,+}) = Z$, so that $[Z]$ contributes to $s^*([Z])$.

Now consider the three components indexed by two minus signs.  Since the points $[1:\pm 1:\pm 1:\pm 1]$
with exactly two minus signs are not on $A_0 \cup A_1$, for these points we have that
$\mathcal{A}^{-1}([1:\pm 1:\pm 1:\pm 1])$ is a single point.
Commutativity of the diagram
\[
\xymatrix{
Y^3\ar[d]^{\mathrm {pr}}\ar[rd]^{\widetilde s}\\
X^3\ar[d]^{\mathcal A}\ar @{-->}[r]^{s} &X^3\ar[d]^{\mathcal A}\\
\P^3\ar[r]^{s_0} &\P^3}
\]
implies 
$\pr(\mathscr{Z}_{\pm,\pm,\pm}) \subseteq \mathcal{A}^{-1}([1:\pm 1:\pm 1:\pm 1])$.
Therefore, the fundamental classes of these preimages do not contribute to $s^*([Z])$.

The remaining four components $\mathscr{Z}_{-1,1,1}, \mathscr{Z}_{1,-1,1}, \mathscr{Z}_{1,1,-1},$ and
$\mathscr{Z}_{-1,-1,-1}$ require a more careful analysis because the corresponding points
in $\P^3$ satisfy 
\begin{eqnarray*}
[1:-1:1:1],[1:1:-1:1],[1:1:1:-1],[1:-1:-1:-1] \in A^1
\end{eqnarray*}
resulting in
\begin{eqnarray*}
\pr|_{\mathcal{B}^{-1}([1:\pm 1: \pm 1: \pm 1])} : \mathcal{B}^{-1}([1:\pm 1: \pm 1: \pm 1]) \rightarrow \mathcal{A}^{-1}([1:\pm 1: \pm 1: \pm 1])
\end{eqnarray*}
being a map from the two-dimensional manifold $\mathcal{B}^{-1}([1:\pm 1: \pm 1: \pm 1]) \cong X^2$ to the one-dimensional manifold $\mathcal{A}^{-1}([1:\pm 1: \pm 1: \pm 1]) \cong X^1$.

Consider $\widetilde{s}|_{\mathcal{B}^{-1}([1: - 1: 1: 1])}: \mathcal{B}^{-1}([1: -
1: 1: 1]) \rightarrow \mathcal{A}^{-1}([1:  1: 1: 1])$.  
The fiber $\mathcal{B}^{-1}([1: - 1: 1: 1])$ is obtained by first blowing up
the point $[1:-1:1:1]$ and then blowing up the proper transforms of the lines
\begin{eqnarray}\label{lines1}
z_1 = -z_2 = z_3, \quad z_1 = -z_2 = z_4, \quad z_1 = z_3 = z_4, \quad -z_2 = z_3 = z_4.
\end{eqnarray}
Similarly, the fiber $\mathcal{A}^{-1}([1:  1: 1: 1])$ is obtained by first blowing up
the point $[1:1:1:1]$ and then blowing up the proper transforms of the lines
\begin{eqnarray}\label{lines2}
z_1 = z_2 = z_3, \quad z_1 = z_2 = z_4, \quad z_1 = z_3 = z_4, \quad z_2 = z_3 = z_4.
\end{eqnarray}
Consider just the point blow ups at $[1:-1:1:1]$ and $[1:1:1:1]$, respectively.  There are coordinates
\begin{eqnarray*}
([y_1:y_2:y_3:y_4],[m_1:m_2:m_3]) \quad \mbox{where} \quad  (m_1,m_2,m_3) \sim (y_1 - y_4,y_2+y_4,y_3-y_4),
\end{eqnarray*}
and
\begin{eqnarray*}
([x_1:x_2:x_3:x_4],[n_1:n_2:n_3]) \quad \mbox{where} \quad (n_1,n_2,n_3) \sim (x_1 - x_4,x_2-x_4,x_3-x_4) 
\end{eqnarray*}
in neighborhoods of $E_{[1:-1:1:1]}$ within $Y^3_1$ and of $E_{[1:1:1:1]}$
within $X^3_1$.  In fact, these serve as coordinates in a neighborhood of the
generic points of $\mathcal{B}^{-1}([1: - 1: 1: 1])$ and $\mathcal{A}^{-1}([1:
 1: 1: 1])$, within $Y^3$ and $X^3$, respectively.  (By generic, we mean
points that are not altered by the blow ups of the proper transforms of the lines (\ref{lines1}) and
(\ref{lines2}).)

In these coordinates, $\widetilde{s}|_{\mathcal{B}^{-1}([1: - 1: 1: 1])}:
\mathcal{B}^{-1}([1: - 1: 1: 1]) \rightarrow \mathcal{A}^{-1}([1:  1: 1: 1])$
is given by
\begin{eqnarray*}
\widetilde{s}|_{\mathcal{B}^{-1}([1: - 1: 1: 1])}([m_1:m_2:m_3]) = [m_1:m_2:m_3].
\end{eqnarray*}
At generic points of $\mathcal{A}^{-1}([1:  1: 1: 1])$, $Z$ is described
by the equation $n_1 = n_2$.   Therefore, $\mathscr{Z}_{-,+,+} = (\widetilde{s}|_{\mathcal{B}^{-1}([1: - 1: 1: 1])})^{-1}(Z)$ is described at generic points of $\mathcal{B}^{-1}([1: - 1: 1: 1])$
by 
\begin{eqnarray}\label{EQN:PULLBACK_Z}
m_1 = m_2.  
\end{eqnarray}

The fiber ${\mathcal{A}}^{-1}([1: - 1:1:1])$ is a result of blowing up  the
line $\{z_1 = z_3 = z_4\} \in A^1$.  
Coordinates in a neighborhood of this fiber are given by
\begin{eqnarray*}
([z_1:z_2:z_3:z_4],[p_1:p_2]), \quad \mbox{where} \quad (p_1,p_2) \sim (z_1-z_4,z_3-z_4).
\end{eqnarray*}
Notice that generic points of $Y^3$ (i.e. those not in $\mathcal{B}^{-1}(B^0
\cup B^1)$) can be described by the homogeneous coordinates $[y_1:y_2:y_3:y_4]$
on $\P^3$ and generic points on $X^3$ (those not in $\mathcal{A}^{-1}(A^0
\cup A^1)$) can be described by the homogeneous
coordinates $[z_1:z_2:z_3:z_4]$.  At these generic points
we have $\pr([y_1:y_2:y_3:y_4]) = [y_1:y_2:y_3:y_4]$.   Since $\pr$ is
continuous, this implies that at generic points of the fiber $\mathcal{B}^{-1}([1: - 1: 1: 1])$, we have
\begin{eqnarray*}
[p_1:p_2] = \pr([m_1:m_2:m_3]) = [m_1:m_3].
\end{eqnarray*}
In particular, Equation (\ref{EQN:PULLBACK_Z}) places no restriction on
the values $[p_1:p_2]$, so we conclude that $\dim(\pr(\mathscr{Z}_{-,+,+}))~=~1$
and hence that $\pr(\mathscr{Z}_{-,+,+}) = \mathcal{A}^{-1}([1:-1:1:1])$.

If we repeat the previous calculation over $[1:1:-1:1]$, the homogeneous coordinates
on $\mathcal{B}^{-1}([1:  1: -1: 1])$ are given by
\begin{eqnarray}
([y_1:y_2:y_3:y_4],[m_1:m_2:m_3]) \quad \mbox{where} \quad (m_1,m_2,m_3) \sim (y_1 - y_4,y_2-y_4,y_3+y_4),
\end{eqnarray}
and again, $(\widetilde{s} | _{{\mathcal{B}}^{-1}([1:  1: -1: 1])}^{_1}(Z)$ is given at generic points of ${\mathcal{B}}^{-1}([1:  1: -1: 1])$ by
\begin{eqnarray}\label{EQN:PULLBACK_Z2}
m_1 = m_2.
\end{eqnarray}

Meanwhile, the fiber ${\mathcal{A}}^{-1}([1:  1:-1:1])$ is a result of blowing
up  the line $\{z_1 = z_2 = z_4\} \in A^1$. 
Coordinates in a neighborhood of this fiber are given by
\begin{eqnarray*}
([z_1:z_2:z_3:z_4],[p_1:p_2]), \quad \mbox{where} \quad (p_1,p_2) \sim (z_1-z_4,z_2-z_
4).
\end{eqnarray*}
At generic points of the fiber $\mathcal{B}^{-1}([1: - 1: 1: 1])$, we have
\begin{eqnarray*}
[p_1:p_2] = \pr([m_1:m_2:m_3]) = [m_1:m_2].
\end{eqnarray*}
In particular, Equation (\ref{EQN:PULLBACK_Z}) places the restriction $[p_1:p_2] = [1:1]$ on points in $\pr(\mathscr{Z}_{+,-,+})$.  We conclude that $\dim(\pr(\mathscr{Z}_{+,-,+}))~=~0$.

Using very similar calculations, one finds that the component of
$\pr(\mathscr{Z}_{+,+,-})$ over $[1:1:1:-1]$ is a single point on
${\mathcal{A}}^{-1}([1:  1:1:-1])$ and that the component of
$\pr(\mathscr{Z}_{-,-,-})$ over $[1:-1:-1:-1]$ is the whole one-dimensional fiber
${\mathcal{A}}^{-1}([1:  -1:-1:-1])$.  

In summary, Proposition \ref{ai_prop} gives
\begin{eqnarray}
s^*[Z] = [Z] + [{\mathcal{A}}^{-1}([1:  -1:1:1])] + [{\mathcal{A}}^{-1}([1:  -1:-1:-1])].
\end{eqnarray}

It remains to project $[{\mathcal{A}}^{-1}([1:  -1:1:1])]$ and
$[{\mathcal{A}}^{-1}([1:  -1:-1:-1])]$ back into the basis ${\bm B}_2^3$.
The divisor $D^{S_{14}}$ is obtained as the blow up of the proper transform of $\{z_1=z_3=z_4\}$ within $X_1^3$.  It is biholomorphic to $\P^1 \times \P^1$, with the first factor parameterized by points of the line $z_1=z_3=z_4$ and the second factor parameterized by the fibers of the blow up.  In particular, any two fibers have cohomologous fundamental class.  Thus, 
\begin{eqnarray*}
[{\mathcal{A}}^{-1}([1:  -1:1:1])] \cong [D^{S_3} \cap D^{S_{14}}] = [Z_{14}],
\end{eqnarray*}
because $D^{S_3} \cap D^{S_{14}}$ is the fiber of the blow up over the intersection
of the proper transform of $\{z_1=z_3=z_4\}$ with $E_{[0:1:0:0]}$.
Using similar reasoning, we have
\begin{eqnarray*}
[{\mathcal{A}}^{-1}([1:  -1:-1:-1])] \cong [D^{S_4} \cap D^{S_{13}}] = [Z_{15}].
\end{eqnarray*}

We summarize our calculation with:
\begin{proposition}\label{PROP_PULLBACK_DIM3} With respect to the basis  ${\bm B}_2^3$, $s^*: H^{2,2}(X^3;\C) \rightarrow H^{2,2}(X^3;\C)$ is given by the matrix
{\footnotesize
\begin{eqnarray*}
s^* = \left[
\begin{array}{cccccccccccccccc}
4 & 0 & 0 & 0& 0& 0& 0& 0& 0& 0& 0& 0& 0& 0& 0& 0 \\
0 & 4 & 0 & 0& 0& 0& 0& 0& 0& 0& 0& 0& 0& 0& 0& 0 \\
0 & 0 & 4 & 0& 0& 0& 0& 0& 0& 0& 0& 0& 0& 0& 0& 0 \\
0 & 0 & 0 & 4& 0& 0& 0& 0& 0& 0& 0& 0& 0& 0& 0& 0 \\
0 & 0 & 0 & 0& 4& 0& 0& 0& 0& 0& 0& 0& 0& 0& 0& 0 \\
0 & 0 & 0 & 0& 0& 4& 0& 0& 0& 0& 0& 0& 0& 0& 0& 0 \\
0 & 0 & 0 & 0& 0& 0& 4& 0& 0& 0& 0& 0& 0& 0& 0& 0 \\
0 & 0 & 0 & 0& 0& 0& 0& 4& 0& 0& 0& 0& 0& 0& 0& 0 \\
0 & 0 & 0 & 0& 0& 0& 0& 0& 4& 0& 0& 0& 0& 0& 0& 0 \\
0 & 0 & 0 & 0& 0& 0& 0& 0& 0& 4& 0& 0& 0& 0& 0& 0 \\
0 & 0 & 0 & 0& 0& 0& 0& 0& 0& 0& 4& 0& 0& 0& 0& 0 \\
0 & 0 & 0 & 0& 0& 0& 0& 0& 0& 0& 0& 2& 0& 0& 0& 0 \\
0 & 0 & 0 & 0& 0& 0& 0& 0& 0& 0& 0& 0& 2& 0& 0& 0 \\
0 & 0 & 0 & 0& 0& 0& 0& 0& 0& 0& 0& 0& 0& 2& 0& 1 \\
0 & 0 & 0 & 0& 0& 0& 0& 0& 0& 0& 0& 0& 0& 0& 2& 1 \\
0 & 0 & 0 & 0& 0& 0& 0& 0& 0& 0& 0& 0& 0& 0& 0& 1 \\
\end{array}
\right]
\end{eqnarray*}
}
\end{proposition}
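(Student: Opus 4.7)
My plan is to apply Proposition \ref{ai_prop} to each of the sixteen basis elements $[Z_i]$ in turn, since that proposition reduces the computation of $s^*[Z_i]$ to the combinatorial problem of enumerating the codimension-$2$ irreducible components of $s^{-1}(Z_i) = \pr(\widetilde{s}^{-1}(Z_i))$ and counting the ramified divisors appearing in the decomposition $Z_i = D^{S_j} \cap D^{S_k}$.

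First I would handle $[Z_1],\ldots,[Z_{11}]$. Each such stratum is an intersection of two ramified boundary divisors, so $r = 2$ and Proposition \ref{ai_prop} gives a coefficient $2^2 = 4$ in front of each component. The task is then to verify that the only codimension-$2$ component of $s^{-1}(Z_i)$ is $Z_i$ itself. For two ramified divisors $D^{S_j},D^{S_k}$, Lemma \ref{LEM_PULLBACK_RAMIFIED_DIVISOR1} identifies the unique divisor preimage $\mathscr{D}^{S_j},\mathscr{D}^{S_k}$, and an argument parallel to the one in Proposition \ref{PROP:H11_PULLBACK} (using the identity coordinate description of $\pr$ at generic points together with Zariski's Main Theorem) shows $\pr(\mathscr{D}^{S_j} \cap \mathscr{D}^{S_k}) = Z_i$, giving the desired diagonal entry $4$.

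Next I would handle $[Z_{12}],\ldots,[Z_{15}]$, where $r = 1$, so the predicted coefficient is $2$. Each such $Z_i$ has the form $D^{S} \cap D^{T}$ where $D^S$ is an exceptional divisor over one of the points in $A^0$ and $D^T$ is an unramified divisor meeting the fiber $\mathcal{A}^{-1}(\mathrm{point})$ in a single hypersurface. Using the product-structure description of fibers from the proof of Lemma \ref{finitefibers}, the restriction $\widetilde{s}|_{\mathcal{B}^{-1}(\mathrm{point})} \cong \widetilde{s}^{\,2} : Y^2 \to X^2$ is the lower-dimensional squaring map, and Proposition \ref{PROP:H11_PULLBACK} applied inside this fiber isolates the unique codimension-$2$ component mapping onto $Z_i$. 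This yields the diagonal entry $2$.

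The main obstacle is $[Z_{16}] = [D^{S_{12}} \cap D^{S_{18}}]$, where both divisors are unramified ($r = 0$) and the preimages break into several irreducible pieces whose $\pr$-images need not be contained in $Z_{16}$. The plan is to work in explicit blow-up coordinates. The stratum $Z_{16}$ sits inside the exceptional divisor over $[1{:}1{:}1{:}1]$, so by Lemma \ref{LEM_PULLBACK_UNRAMIFIED_DIVISOR} one enumerates the eight irreducible components $\mathscr{Z}_{\pm,\pm,\pm}$ of $\widetilde{s}^{-1}(Z_{16})$, one lying over each preimage $[1{:}\pm 1{:}\pm 1{:}\pm 1]$ under $s_0$. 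For each sign pattern, one must determine whether $\pr$ maps $\mathscr{Z}_{\pm,\pm,\pm}$ onto a codimension-$2$ subvariety: by commutativity of the diagram relating $\AA,\BB,s_0$ and $\widetilde{s}$, the image lies in the fiber $\AA^{-1}([1{:}\pm 1{:}\pm 1{:}\pm 1])$, which is a point when the target lies outside $A^0 \cup A^1$ and a copy of $\P^1$ when it lies on a line in $A^1$. For the four sign patterns landing in $A^1$, local coordinates on the blow-ups along the relevant lines $\{z_i = z_j = z_k\}$ allow one to write both $\widetilde{s}$ and $\pr$ as monomial/linear maps and thus decide whether the defining equation of $Z_{16}$ pulls back to something cutting the fiber down to a point or leaves it free. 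I expect two of the four $A^1$-preimages to map onto a full $\P^1$-fiber, contributing two divisors that one then re-expresses in the basis ${\bm B}_2^3$ using the product structure $D^S \cong \P^1 \times \P^1$ of the relevant divisors over $A^1$ (so that any two rulings are cohomologous, identifying the fibers with $[Z_{14}]$ and $[Z_{15}]$). Combined with the trivial contribution $[Z_{16}]$ from the sign pattern $(+,+,+)$, this produces the last column $[Z_{14}] + [Z_{15}] + [Z_{16}]$ of the stated matrix.
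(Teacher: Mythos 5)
Your proposal is correct and follows essentially the same route as the paper: Proposition \ref{ai_prop} with $r=2$ for the eleven doubly-ramified strata, the fiber identification $\widetilde{s}|_{\mathcal{B}^{-1}(\text{point})}\cong\widetilde{s}^{\,2}:Y^2\to X^2$ from Lemma \ref{finitefibers} for $Z_{12},\ldots,Z_{15}$, and for $Z_{16}$ the enumeration of the eight components $\mathscr{Z}_{\pm,\pm,\pm}$, the dimension analysis of their $\pr$-images over $[1{:}\pm1{:}\pm1{:}\pm1]$ (a point off $A^0\cup A^1$ versus a $\P^1$-fiber over $A^1$), and the $\P^1\times\P^1$ ruling argument identifying the surviving fibers with $[Z_{14}]$ and $[Z_{15}]$. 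The explicit blow-up coordinate computation you defer is exactly what the paper carries out, and it confirms your expectation that precisely the components over $[1{:}-1{:}1{:}1]$ and $[1{:}-1{:}-1{:}-1]$ contribute.
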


%
%
%

%
%

\section{Dynamical Degree Data}\label{SEC:CATALOG}

Let $P=\{{\bm p_1},{\bm p_2},p_3,\ldots,p_n\}$ contain at least three points,
and recall $N:=n-3$. We distinguish the points ${\bm p_1}$ and ${\bm p_2}$ in
boldface, as these are precisely the distinguished points in our coordinate
system (see Section \ref{projective}). 
Using the bases from Section \ref{SEC:DYN_DEG}, we explicitly computed the following dynamical degrees:

\begin{itemize}
\item[$\quad$] $|P|=5$ (equivalently $N=2$), for all $\rho\in S_P$, we compute $\lambda_1(f_\rho)$, 
\item[$\quad$] $|P|=6$ (equivalently $N=3$), for all $\rho\in S_P$, we compute $\lambda_1(f_\rho)$, and $\lambda_2(f_\rho)$,
\item[$\quad$] $|P|=7$ (equivalently $N=4$), for all $\rho\in S_P$, we compute $\lambda_1(f_\rho)$, 
\item[$\quad$] $|P|=8$ (equivalently $N=5$), for all cyclic permutations $\rho\in S_P$, we compute $\lambda_1(f_\rho)$. 
\end{itemize}
They are presented in Tables \ref{TABLE1}-\ref{TABLE4}, below.
As previously mentioned, the methods employed in Section \ref{SEC:DYN_DEG} should generalize to computing the dynamical degrees $\lambda_k(f_\rho)$ for $f_\rho:X^N~\dashrightarrow~X^N$ for arbitrary $N$ and $1\leq k\leq N$. 


\begin{question}
To what extent does the structure of the permutation affect the dynamical degrees? There are some patterns that are evident in the tables below. For instance, when $\rho\in S_P$ consists of just one cycle of length $n$, the dynamical degrees are ``more complicated'' from an algebraic point of view (they tend to have higher algebraic degree). A somewhat related question concerns the characteristic polynomials: in almost all examples, the degree of the eigenvalue corresponding to $\lambda_k(f_\rho)$ is strictly less than the dimension of $H^{k,k}(X^N;\C)$, and the characteristic polynomial factors. What is the dynamical significance of i) the number of factors, and ii) the algebraic multiplicity of each factor?
\end{question}

\begin{remark}\label{REM:ONE_STABILITY} One can easily notice from the tables that the first dynamical
degree of $f_\rho: X^N \dashrightarrow X^N$ is always an algebraic integer of
degree $\leq N+3$ which is significantly less then ${\rm dim}(H^{1,1}(X^N;\C)) = 2^{N+2}-\binom{N+3}{2}-1$ when $N \geq 3$.  This can be explained as follows:  Let $Z^N$ be the blow up
of $\P^N$ at the $N+2$ points from $A^0$ and let $\pi: X^N \rightarrow Z^N$ be
the resulting blow down map.  One can check that $\pi \circ f_\rho \circ
\pi^{-1}: Z^N \dashrightarrow Z^N$ is $1$-stable so that its dynamical degree
is an algebraic integer of degree less than or equal to ${\rm
dim}(H^{1,1}(Z^N;\C)) = N+3$.  The result follows for $f_\rho$ since dynamical
degrees are invariant under birational conjugacy.  \end{remark}

\begin{table}[h!]
\begin{center}
\begin{tabular}{|l|}
\hline
 ${\bm p_1}\mapsto {p_3}\mapsto {\bm p_1}  \qquad     {\bm p_2}\mapsto p_4\mapsto p_5\mapsto {\bm p_2}$        \\
 $\lambda_1(f_\rho) \approx 2.2292085  \qquad   \lambda^4+\lambda^3-2\lambda^2-8\lambda-8$ \\
\hline 
 ${\bm p_1}\mapsto {p_3}\mapsto {\bm p_2}\mapsto p_4\mapsto p_5\mapsto {\bm p_1}$    \\
 $\lambda_1(f_\rho) \approx 2.2755888  \qquad \lambda^5-\lambda^4-8\lambda-16$ \\
\hline
 ${\bm p_1}\mapsto {\bm p_2}\mapsto p_3\mapsto p_4\mapsto p_5\mapsto {\bm p_1}$    \\
 $\lambda_1(f_\rho) \approx2.2667836 \qquad  \lambda^5-2\lambda^3-4\lambda^2-16$ \\
\hline
\end{tabular}
\end{center}
\caption{\label{TABLE1}Data for $f_\rho^*:H^{1,1}(X^2;\C)\to H^{1,1}(X^2;\C)$; the permutation $\rho\in S_P$ is given in terms of cycles, an approximate value of the dynamical degree $\lambda_1(f_\rho)$ is given as well as the minimal polynomial for $\lambda_1(f_\rho)$. There are $120$ such maps $f_\rho:X^2\dashrightarrow X^2$ corresponding to all permutations $\rho\in S_P$. The examples in this chart (and all maps which are birationally conjugate to any of these examples) are the only maps $f_\rho:X^2\dashrightarrow X^2$ for which $\lambda_1(f_\rho)\neq 2$.}
\vspace{0.15in}
\end{table}


\begin{table}
\begin{tabular}{|l|}
\hline
${\bm p_1}\mapsto {p_3}\mapsto {\bm p_1}  \qquad      {\bm p_2}\mapsto p_4\mapsto p_5\mapsto {\bm p_2} \quad p_6\mapsto p_6$       \\  $\lambda_1(f_\rho) \approx2.2292085 \qquad \lambda^4 + \lambda^3 -2\lambda^2 - 8\lambda - 8$  \\
$\lambda_2(f_\rho) \approx 4.4584171 \qquad \lambda^4+2\lambda^3-8\lambda^2-64\lambda-128$ \\
\hline

${\bm p_1}\mapsto {p_3}\mapsto {\bm p_2}\mapsto p_4\mapsto p_5\mapsto {\bm p_1}   \quad p_6\mapsto p_6$    \\
$\lambda_1(f_\rho) \approx 2.2755888 \qquad \lambda^5-\lambda^4-8\lambda-16$   \\
$\lambda_2(f_\rho) \approx 4.5511777 \qquad \lambda^5-2\lambda^4-128\lambda-512$ \\

\hline
${\bm p_1}\mapsto {\bm p_2}\mapsto p_3\mapsto p_4\mapsto p_5\mapsto {\bm p_1}  \quad p_6\mapsto p_6$  \\
$\lambda_1(f_\rho) \approx 2.2667836 \qquad \lambda^5-2\lambda^3-4\lambda^2-16$ \\
$\lambda_2(f_\rho) \approx 4.5335672 \qquad \lambda^5-8\lambda^3-32\lambda^2-512$ \\

\hline
${\bm p_1}\mapsto p_3\mapsto {\bm p_1}\quad {\bm p_2}\mapsto p_4\mapsto p_5\mapsto p_6\mapsto {\bm p_2}$  \\
$\lambda_1(f_\rho) \approx 2.3461556 \qquad \lambda^4-\lambda^3-4\lambda-8$  \\
$\lambda_2(f_\rho) \approx 4.6658733 \qquad \lambda^9-3\lambda^8-16\lambda^6-192\lambda^5 +384\lambda^4+128\lambda^3+6144\lambda-8192$  \\

\hline

${\bm p_1}\mapsto p_3\mapsto p_4\mapsto {\bm p_1}\quad {\bm p_2}\mapsto p_5\mapsto p_6\mapsto {\bm p_2}$  \\
$\lambda_1(f_\rho) \approx 2.4675038 \qquad \lambda^3-\lambda^2-2\lambda-4$\\
$\lambda_2(f_\rho) \approx 4.7900395 \qquad \lambda^6-\lambda^5-4\lambda^4-64\lambda^3-16\lambda^2-64\lambda+256$  \\
\hline

${\bm p_1}\mapsto {\bm p_2}\mapsto p_3\mapsto p_4\mapsto p_5\mapsto p_6\mapsto {\bm p_1}$  \\
$\lambda_1(f_\rho) \approx 2.4316847 \qquad \lambda^5+2\lambda^4+2\lambda^3-8\lambda^2+8\lambda-16$ \\
$\lambda_2(f_\rho) \approx 4.80241001\quad $ \footnotesize{$\lambda^{14}-4\lambda^{13}+12\lambda^{12}-88\lambda^{11}
+256\lambda^{10}-1152\lambda^9+768\lambda^8 -3072\lambda^7$} \\
\hspace{1.5in} \footnotesize{$+12288\lambda^6 +24576\lambda^5-65536\lambda^4 + 458752\lambda^3+1048576\lambda-4194304$} \\
\hline

${\bm p_1}\mapsto p_3\mapsto {\bm p_2}\mapsto p_4\mapsto p_5\mapsto p_6\mapsto {\bm p_1}$  \\
$\lambda_1(f_\rho) \approx 2.4576736 \qquad \lambda^6-\lambda^5-4\lambda^4-16\lambda -32$ \\
$\lambda_2(f_\rho) \approx 4.84568805 \qquad \lambda^{13} - 3\lambda^{12} - 16\lambda^{10}  - 256\lambda^8 
- 3584\lambda^7 + 7168\lambda^6$ \\
 \hspace{2in} $+ 2048\lambda^5
+ 32768\lambda^3 + 1572864\lambda - 2097152$ \\
\hline

${\bm p_1}\mapsto p_3\mapsto p_4\mapsto {\bm p_2}\mapsto p_5\mapsto p_6\mapsto {\bm p_1}$  \\
$\lambda_1(f_\rho) \approx 2.4675037 \qquad \lambda^3-\lambda^2-2\lambda-4$\\
$\lambda_2(f_\rho) \approx 4.7900395 \qquad \lambda^6 - \lambda^5 - 4\lambda^4 - 64\lambda^3 - 16\lambda^2 - 64\lambda + 25$ \\
\hline
\end{tabular}
\vspace{0.15in}
\caption{Data for $f_\rho^*:H^{1,1}(X^3;\C)\to H^{1,1}(X^3;\C)$ and $f_\rho^*:H^{2,2}(X^3;\C)\to H^{2,2}(X^3;\C)$; the permutation $\rho\in S_P$ is given in terms of cycles, an approximate value of the dynamical degrees $\lambda_1(f_\rho)$ and $\lambda_2(f_\rho)$ are given as well as the minimal polynomials for $\lambda_1(f_\rho)$ and $\lambda_2(f_\rho)$. There are $720$ such maps $f_\rho:X^3\dashrightarrow X^3$ corresponding to all permutations $\rho\in S_P$. The examples in this chart (and all maps which are birationally conjugate to any of these examples) are the only maps $f_\rho:X^3\dashrightarrow X^3$ for which $\lambda_1(f_\rho)\neq 2$, and they are also the only maps  $f_\rho:X^3\dashrightarrow X^3$ for which $\lambda_2(f_\rho)\neq 4$.}
\vspace{0.1in}
\end{table}

\begin{table}
\begin{center}
\begin{tabular}{|l|}
\hline
${\bm p_1}\mapsto {\bm p_2}\mapsto p_3\mapsto p_4\mapsto p_5\mapsto {\bm p_1}\quad p_6\mapsto p_6\quad p_7\mapsto p_7$  \\
$\lambda_1(f_\rho) \approx 2.2667836 \qquad \lambda^5-2\lambda^3-4\lambda^2-16$ \\
\hline
${\bm p_1}\mapsto {\bm p_2}\mapsto p_3\mapsto p_4\mapsto p_5\mapsto {\bm p_1}\quad p_6\mapsto p_7\mapsto p_6$  \\
$\lambda_1(f_\rho) \approx 2.2667836 \qquad \lambda^5-2\lambda^3-4\lambda^2-16$  \\
\hline
${\bm p_1}\mapsto p_3\mapsto {\bm p_2} \mapsto p_4\mapsto p_5\mapsto {\bm p_1}\quad p_6\mapsto p_6\quad p_7\mapsto p_7$  \\
$\lambda_1(f_\rho) \approx 2.2755888 \qquad \lambda^5-\lambda^4-8\lambda-16$ \\
\hline
${\bm p_1}\mapsto p_3\mapsto {\bm p_2} \mapsto p_4\mapsto p_5\mapsto {\bm p_1}\quad p_6\mapsto  p_7\mapsto p_6$  \\
$\lambda_1(f_\rho) \approx 2.2755888 \qquad \lambda^5-\lambda^4-8\lambda-16$ \\
\hline
${\bm p_1}\mapsto p_3\mapsto {\bm p_1}\quad {\bm p_2}\mapsto p_4\mapsto p_5\mapsto {\bm p_2}\quad p_6\mapsto p_6\quad p_7\mapsto p_7$  \\
$\lambda_1(f_\rho) \approx 2.2292085 \qquad \lambda^4+\lambda^3-2\lambda^2-8\lambda-8$ \\
\hline
${\bm p_1}\mapsto p_3\mapsto {\bm p_1}\quad {\bm p_2}\mapsto p_4\mapsto p_5\mapsto {\bm p_2}\quad p_6\mapsto p_7\mapsto p_6$  \\
$\lambda_1(f_\rho) \approx 2.2292085 \qquad \lambda^4+\lambda^3-2\lambda^2-8\lambda-8$ \\
\hline
${\bm p_1}\mapsto {\bm p_2}\mapsto p_3\mapsto p_4\mapsto p_5\mapsto p_6\mapsto {\bm p_1} \quad p_7\mapsto p_7$  \\
$\lambda_1(f_\rho) \approx 2.4316847 \qquad \lambda^5-2\lambda^4+2\lambda^3-8\lambda^2+8\lambda-16$ \\
\hline
${\bm p_1}\mapsto p_3\mapsto {\bm p_2}\mapsto p_4\mapsto p_5\mapsto p_6\mapsto {\bm p_1}\quad p_7\mapsto p_7$  \\
$\lambda_1(f_\rho) \approx 2.4576736 \qquad \lambda^6-\lambda^5-4\lambda^3-16\lambda-32$ \\
\hline
${\bm p_1}\mapsto p_3\mapsto p_4\mapsto {\bm p_2}\mapsto p_5\mapsto p_6\mapsto {\bm p_1}\quad p_7\mapsto p_7$  \\
$\lambda_1(f_\rho) \approx 2.4675038 \qquad \lambda^3-\lambda^2-2\lambda -4$ \\
\hline
${\bm p_1}\mapsto p_2\mapsto p_3\mapsto p_4\mapsto p_5\mapsto p_6 \mapsto p_7\mapsto {\bm p_1}$  \\
$\lambda_1(f_\rho) \approx 2.5339057 \qquad \lambda^7-2\lambda^5-4\lambda^4-8\lambda^3-16\lambda^2-64$ \\
\hline
${\bm p_1}\mapsto p_3\mapsto {\bm p_2}\mapsto p_4\mapsto p_5\mapsto p_6\mapsto p_7\mapsto {\bm p_1}$  \\
$\lambda_1(f_\rho) \approx 2.5746797 \qquad \lambda^7-\lambda^6-4\lambda^4-8\lambda^3-32\lambda-64$ \\
\hline
${\bm p_1}\mapsto p_3\mapsto p_4\mapsto {\bm p_2}\mapsto p_5\mapsto p_6\mapsto p_7\mapsto {\bm p_1}$  \\
$\lambda_1(f_\rho) \approx 2.5985401 \qquad \lambda^7-\lambda^6-2\lambda^5-16\lambda^2-32\lambda -64$ \\
\hline
\end{tabular}
\end{center}
\caption{Data for $f_\rho^*:H^{1,1}(X^4;\C)\to H^{1,1}(X^4;\C)$; the permutation $\rho\in S_P$ is given in terms of cycles, an approximate value of the dynamical degree $\lambda_1(f_\rho)$ is given as well as the minimal polynomial for $\lambda_1(f_\rho)$. There are $5040$ such maps $f_\rho:X^4\dashrightarrow X^4$ corresponding to all permutations $\rho\in S_P$. The examples in this chart (and all maps which are birationally conjugate to any of these examples) are the only maps $f_\rho:X^4\dashrightarrow X^4$ for which $\lambda_1(f_\rho)\neq 2$.}
\end{table}

\begin{table}
\begin{center}
\begin{tabular}{|l|}
\hline
${\bm p_1}\mapsto {\bm p_2}\mapsto p_3\mapsto p_4\mapsto p_5\mapsto p_6\mapsto p_7\mapsto p_8\mapsto {\bm p_1}$  \\ 
$\lambda_1(f_\rho) \approx 2.5986551 \qquad \lambda^7-2\lambda^6+2\lambda^5-8\lambda^4+8\lambda^3-32\lambda^2+32\lambda-64$ \\
\hline
${\bm p_1}\mapsto p_3\mapsto {\bm p_2}\mapsto p_4\mapsto p_5\mapsto p_6\mapsto p_7\mapsto p_8\mapsto {\bm p_1}$  \\
$\lambda_1(f_\rho) \approx 2.6494359 \qquad \lambda^3-4\lambda-8$\\
\hline
${\bm p_1}\mapsto p_3\mapsto p_4\mapsto {\bm p_2} \mapsto {p_5}\mapsto p_6\mapsto p_7\mapsto p_8\mapsto {\bm p_1}$  \\
$\lambda_1(f_\rho) \approx 2.68518317 \qquad \lambda^7-3\lambda^6+4\lambda^5-8\lambda^4+8\lambda^3-16\lambda^2-52$ \\
\hline
${\bm p_1}\mapsto p_3\mapsto p_4\mapsto p_5\mapsto {\bm p_2}\mapsto p_6\mapsto p_7\mapsto p_8\mapsto {\bm p_1}$  \\
$\lambda_1(f_\rho) \approx 2.6980689 \qquad \lambda^4-\lambda^3-2\lambda^2-4\lambda-8$ \\
\hline
\end{tabular}
\end{center}
\caption{\label{TABLE4}Data for $f_\rho^*:H^{1,1}(X^5;\C)\to H^{1,1}(X^5;\C)$; the permutation $\rho\in S_P$ is given in terms of cycles, an approximate value of the dynamical degree $\lambda_1(f_\rho)$ is given as well as the minimal polynomial for $\lambda_1(f_\rho)$. Of the $40,320$ such maps $f_\rho:X^5\dashrightarrow X^5$ corresponding to all permutations $\rho\in S_P$, we present the data for representative examples corresponding to the cyclic permutations $\rho\in S_P$.} 
\end{table}

\end{document}